\documentclass[a4paper,12pt,leqno]{amsart}
\makeatletter
\usepackage{amsfonts,delarray,amssymb,amsmath,amsthm,a4,a4wide, color}

\usepackage{color}

\title[Convergence of Ohta-Kawasaki to nonlocal Mullins-Sekerka Motion]
{On the convergence of the Ohta-Kawasaki Equation to motion by nonlocal Mullins-Sekerka Law}
\author{Nam Q.  Le}
\address{Department of Mathematics, Columbia University, New York,  NY 10027, USA}
\email{namle@math.columbia.edu}
\subjclass[2000]{49J45, 35Q99, 35B25, 35K30, 35B40, 49S05.}
\keywords{Ohta-Kawasaki equation, nonlocal Mullins-Sekerka law, Cahn-Hilliard equation, diblock copolymers, Gamma 
convergence of gradient flows, De Giorgi conjecture, transport estimate.}

\pagestyle{myheadings}
\newcommand{\review}[2][\right]{\relax
\ifx#1\right\relax \left.\fi#2#1\rvert}
\let\abs=\envert
 
\newtheorem{theorem}{Theorem}[section]
\newtheorem{propo}{Proposition}[section]
\newtheorem{remark}{Remark}[section]

\newtheorem{lemma}{Lemma}[section]
\newcommand{\bef}{\begin{flushright}}
\newcommand{\eef}{\end{flushright}}

\newcommand{\eval}[2][\right]{\relax
\ifx#1\right\relax \left.\fi#2#1\rvert}

\let\abs=\envert

\numberwithin{equation}{section}
\let\norm=\enVert
\newcommand\e{\varepsilon}
\newcommand{\h}{\hspace*{.24in}}

\def\h{\hspace*{.24in}}

\def\beq{\begin{eqnarray*}}
\def\eeq{\end{eqnarray*}}

\newenvironment{myindentpar}[1]%
{\begin{list}{}%
         {\setlength{\leftmargin}{#1}}%
         \item[]%
}
{\end{list}}

\def\RR{\mbox{$I\hspace{-.06in}R$}}

\begin{document}
\date{April 24, 2010}

\maketitle
\author

\pagenumbering{arabic}
\begin{abstract}
In this paper, we establish the convergence of the Ohta-Kawasaki equation to motion by nonlocal Mullins-Sekerka law 
on any smooth domain in 
space dimensions $N\leq 3$. These equations arise in modeling microphase separation in diblock copolymers. The only 
assumptions that guarantee our convergence result are $(i)$ well-preparedness of the initial data and $(ii)$ smoothness of 
the limiting interface. Our method makes use of the 
``Gamma-convergence'' of gradient flows scheme initiated by Sandier and Serfaty and the constancy of multiplicity of the 
limiting interface due 
to its smoothness. For the case of radially symmetric initial data without well-preparedness, we give a new and short proof 
of the result of M. Henry for all space dimensions. Finally, we establish transport estimates for solutions of the Ohta-Kawasaki equation 
characterizing their transport mechanism.
\end{abstract}
\section{Introduction}
\subsection{The Ohta-Kawasaki equation} This paper is concerned with the asymptotic limit, as $\varepsilon\searrow 0,$ of 
the solutions to the Ohta-Kawasaki equation \cite{OK} with initial data $u^{\e}_{0}$
 \begin{equation}\displaystyle
  \left\{
  \begin{alignedat}{2}
   \partial_{t}u^{\varepsilon} &= -\Delta w^{\varepsilon}   \h\h&& (x, t)\in \Omega\times (0, \infty) 
\\\ w^{\varepsilon}&=\varepsilon \Delta u^{\varepsilon}-\varepsilon^{-1}f(u^{\varepsilon}) -\lambda v^{\e} \h\h&& (x, t)\in \Omega\times [0, \infty)
\\\ 
-\Delta v^{\e}& = u^{\e} -\overline{u^{\e}}_{\Omega} \h\h&& (x, t)\in \Omega\times [0, \infty) 
\\\ 
\overline{v^{\e}}_{\Omega} &=0 \h&& (x, t)\in \Omega\times [0, \infty) \\\
\frac{\partial u^{\varepsilon}}{\partial n} ~(x, t)& = \frac{\partial v^{\varepsilon}}{\partial n} (x, t) 
= \frac{\partial w^{\varepsilon}}{\partial n} ~(x, t)=0 && (x, t)\in \partial\Omega\times [0, \infty)\\\ u^{\varepsilon} (x, 0) & = 
u_{0}^{\varepsilon} ~(x) && x\in\Omega . 
  \end{alignedat}
  \right.
\label{OKeq}\end{equation}
\h Here $\Omega$ is a bounded smooth domain in $\RR^{N}~(N\geq 2)$, $f(u)=2u(u^2-1)$ is the derivative of 
the double-well potential $W(u)=\frac{1}{2}(u^2-1)^2$ and $\lambda\geq 0$ is a fixed constant. Throughout, we denote $\overline{u}_{\Omega}$ the average of a function $u$ over $\Omega$:
 $\displaystyle\overline{u}_{\Omega} =\frac{1}{\abs{\Omega}} \int_{\Omega} u dx.$
Moreover, for any function $u$ with average zero, we denote by $\norm{u}_{H^{-1}(\Omega)} = \norm{\nabla \Delta^{-1} u}_{L^{2}},$ where $\Delta^{-1} u$ is the unique solution of the elliptic problem
\begin{equation*}
\left\{
 \begin{alignedat}{2}
 -\Delta v &= u  \h~&&\text{in}~ \Omega, \\\
 \overline{v_{\Omega}} &= 0 \h~&&\text{in}~ \Omega, \\\
\frac{\partial v}{\partial n} & =0 \h~&&\text{on}~ \partial\Omega.
 \end{alignedat} 
 \right.
 \end{equation*}
 \h Associated with equation (\ref{OKeq}) is the Ohta-Kawasaki energy functional $E_{\e}$ first introduced
in \cite{OK} to model microphase separation in diblock copolymers' melts ({\it cf.} \cite{BF}):
\begin{equation}
 E_{\e} (u) = \int_{\Omega}\left(\frac{\varepsilon}{2}\abs{\nabla u}^2 +\frac{1}{\varepsilon}W(u)\right) dx + 
\frac{\lambda}{2}\norm{u- \overline{u}_{\Omega}}_{H^{-1}(\Omega)}^2.
\label{OKfn}
\end{equation}
 See also \cite{CR} for a  derivation of $E_{\e}$ 
from the statistical physics of interacting block copolymers. A diblock copolymer molecule
is a linear chain consisting of two subchains made of two different monomers, say $A$ and $B$. The function $u^{\e}$ in (\ref{OKeq})
is related to the density parameter describing the diblock copolymers' melts: it is essentially the difference between the averaged
densities of monomers $A$ and $B$. The parameter $\e$ is proportional to the thickness of the transition regions between two 
monomers and $\lambda$ is a parameter related to the polymerization index. Outside the transition regions, $u^{\e}\approx \pm 1$. \\
\h There has been a vast literature on the analysis of (\ref{OKfn}). We refer the reader to \cite{ACO, CO05, CO07, RWIFB} for the study of 
minimizers of (\ref{OKfn}) and \cite{NS, RW02, RW03} for the existence and stability of stationary solutions of (\ref{OKfn}).
\subsection{The nonlocal Mullins-Sekerka law } It is expected \cite{NO} that the Ohta-Kawasaki equation converges to motion by nonlocal Mullins-Sekerka 
law. This means that, as $\varepsilon\searrow 0,$ $(u^{\e}, v^{\varepsilon}, w^{\e})$ tends to a limit $(u^{0}, v, w), $ which, together with a free 
boundary $\displaystyle \cup_{0\leq t\leq T}(\Gamma(t)\times \{t\})$, 
solves the following free-boundary problem in a time interval $[0, T]$ for some $T>0$:
\begin{equation}
\left\{
 \begin{alignedat}{2}
u^{0}& = \pm 1\h~&& \text{in}~\Omega_{t}^{\pm}, ~t\in [0, T], \\\
v &= \Delta^{-1}(u^{0} -\overline{u^{0}_{\Omega}}) \h~&&\text{in}~ \Omega\times [0, T], \\\
 \Delta w & = 0 \h~&&\text{in}~ \Omega\backslash \Gamma(t), ~t\in [0, T], \\\ 
\frac{\partial w}{\partial n} & =0 \h~&&\text{on}~ \partial\Omega\times [0, T], \\\
w & =\sigma \kappa -\lambda v \h~&&\text{on}~\Gamma(t),~t\in[0, T], \\\ 
 \partial_{t}\Gamma & = \frac{1}{2} \left [\frac{\partial w}{\partial n}\right]_ {\Gamma(t)} \h~&&\text{on}~\Gamma(t), ~t\in [0, T], \\\ 
\Gamma(0) &=\Gamma_{0}.
 \end{alignedat} 
 \right.
 \label{NO}
 \end{equation}
 \h Here $\kappa(t)$ is the mean curvature of the closed, connected hypersurface $\Gamma(t)\subset\Omega$ with 
the sign convention that the boundary of a convex domain has positive mean 
curvature (More generally, we will consider in this paper the case $\Gamma(t)$ is the union of a finite number of closed, connected hypersurfaces.); 
$\sigma =\displaystyle \int_{-1}^{1}\sqrt{W(s)/2} ds=\frac{2}{3}$; $\partial_{t}\Gamma$ is 
the normal velocity of the hypersurface $\Gamma(t)$ with the sign convention that the normal 
velocity on the boundary of an expanding domain is positive; $\stackrel{\rightarrow}{n}$ is the unit 
outernormal either to $\Omega$ or $\Gamma(t)$; $\left[\frac{\partial w}{\partial n}\right]_{\Gamma(t)}$ denotes 
the jump in the normal derivative of $w$ through the 
hypersurface $\Gamma(t)$, i.e., $\left[\frac{\partial w}{\partial n}\right]_{\Gamma(t)}=\frac{\partial w^{+}}{\partial n}-\frac{\partial w^{-}}{\partial n}$,
 where $w^{+}$ and $w^{-}$ are respectively the 
restriction of $w$ on $\Omega_{t}^{+}$ and $\Omega_{t}^{-}$, the exterior and 
interior of $\Gamma(t)$ in $\Omega$; and finally, $\Gamma_{0}\subset\subset\Omega$ is the initial 
hypersurface separating the phases of the function $u_{0}\in \text{BV}(\Omega,\{-1, 1\})$ which is the $L^{2}(\Omega)$ limit 
of the sequence $\{u_{0}^{\varepsilon}\}_{0<\varepsilon<1}$ (after extraction). \\
\h Associated with (\ref{NO}) is the nonlocal area functional $E$ defined by
\begin{equation}
 E(u) = \sigma\int_{\Omega} \abs{\nabla u} + 
\frac{\lambda}{2}\norm{u- \overline{u}_{\Omega}}_{H^{-1}(\Omega)}^2 \equiv E(\Gamma)
\label{NOener}
\end{equation}
where $\Gamma$ is the interface separating the phases of the function $u\in BV(\Omega,\{-1,1\})$. This functional consists of competing short-range ($\sigma\int_{\Omega} \abs{\nabla u}$) and long-range ($\frac{\lambda}{2}\norm{u- \overline{u}_{\Omega}}_{H^{-1}(\Omega)}^2$) contributions. The former term is attractive, preferring large domains where $u=\pm 1$ with boundaries of minimal surface area. The latter term is repulsive, favoring small domains where $u=\pm 1$ which lead to cancellations.  \\
\h Let us comment briefly on the well-posedness of (\ref{OKeq}) and (\ref{NO}). For each $\e>0$, one can adapt the method in \cite{ESCH}
to prove the existence and uniqueness of smooth solution to (\ref{OKeq}) for smooth initial data $u^{\e}_{0}$. The existence and uniqueness of classical solution for the free-boundary problem
(\ref{NO}) with smooth
initial data have been 
established in \cite{EN}. 
\subsection{Related and previous results } When $\lambda =0$, (\ref{OKeq}) and (\ref{NO}) are the Cahn-Hilliard equation \cite{Cahn, ESCH, Hilliard} and
Mullins-Sekerka law \cite{MS}, respectively.  The convergence of the Cahn-Hilliard equation to motion
by Mullins-Sekerka law has been established in certain cases: for a class of very well-prepared initial data in \cite{ABC, CCO},
in the presence of spherical symmetry in \cite{Stoth}, for general initial data but for a weak varifold formulation
of the Mullins-Sekerka law in \cite{ChenCH}, and under the validity of an $H^{1}$-version of De Giorgi's conjecture
in \cite{LeCH}. For the sake of completeness, we state here the key ingredient of our $H^{1}$-version of De Giorgi's conjecture 
in \cite{LeCH}:\\
{\bf Conjecture (CH).}  {\it Let $\left\{u^{\varepsilon}\right\}_{0<\varepsilon\leq 1}$ be a sequence of $C^{3}$ functions 
satisfying 
\begin{equation*}
 \int_{\Omega}\left(\frac{\varepsilon}{2}\abs{\nabla u^{\e}}^2 +\frac{1}{\varepsilon}W(u^{\e})\right)dx \leq M<\infty,~~\overline{u^{\e}_{\Omega}}
= m_{\varepsilon}\in(-m,m)~(0<m<1).\end{equation*}
 and let $u\in BV(\Omega,\{-1,1\})$ be its $L^{2}(\Omega)$-limit (after extraction). Assume that 
$\Gamma =\partial^{\ast}\{u=1\}\cap\Omega$ is $C^{2}$ and connected. Then 
$$\displaystyle \liminf_{\e\rightarrow 0}\int_{\Omega}\abs{\nabla (\varepsilon\Delta u^{\varepsilon}-
\varepsilon^{-1}f(u^{\varepsilon}))}^2 dx\geq \sigma^2 \norm{\kappa}^{2}_{H_{n}^{1/2}(\Gamma)}.$$}
In the above conjecture, 
$\partial^{\ast} E$ denotes the reduced boundary of a set $E$ of finite
perimeter and for any 
function $g$ defined on $\Gamma$, we denote by $\norm{g}^{2}_{H_{n}^{1/2}(\Gamma)}$ the square of the homogeneous Sobolev norm of $g$ (see
also Section \ref{grNO})
\begin{equation*}
\norm{g}^{2}_{H_{n}^{1/2}(\Gamma)} = \inf_{w\in H^{1}({\Omega}),~ w= g~on~\Gamma}\int_{\Omega} \abs{\nabla w}^2 dx.
\end{equation*}
\h When $\lambda>0$, there have been very few results justifying the convergence of (\ref{OKeq}) to (\ref{NO}) except in some special cases: in one space dimension by Fife and Hilhorst \cite{FH} and 
in higher dimensions with spherical symmetry by Henry \cite{Henry}. See related results in \cite{HHN}. 
On the other hand, there have been recent interesting works \cite{CP, NieO} on the next order asymptotic limit of small volume fraction of
(\ref{NO}) and (\ref{NOener}). Concerning
dynamics, assuming the initial component of small volume fraction, say $\{u^{0}(0) =1\}$, consists of an ensemble of small spheres, the 
work \cite{NieO} rigorously
derives mean-field models for the evolution of such spheres under the nonlocal Mullins-Sekerka law (\ref{NO}).  \\
\h Note that the proof of convergence of
(\ref{OKeq}) to (\ref{NO}) with spherical symmetry in \cite {Henry} was a nontrivial extension of the proof in \cite{Stoth}
for the Cahn-Hilliard equation. In fact, (\ref{NO}) and Mullins-Sekerka dynamics are quite different. As observed in \cite{EN}, in contrast 
to the Mullins-Sekerka law, (\ref{NO}) does not
necessarily decrease the area of $\Gamma(t)$ and most importantly, spheres are not in general equilibria to (\ref{NO}) except for very special
domains $\Omega$ like spherical ones. It has been an interesting
and challenging problem to rigorously establish the convergence of (\ref{OKeq}) to (\ref{NO}) for general domains in higher space
dimensions. \\
\h  We are motivated
by the question: is there any way to establish the convergence of (\ref{OKeq}) to (\ref{NO}), similar to the convergence
of Cahn-Hilliard to motion by Mullins-Sekerka law, where the smooth nonlocal perturbations 
$v^{\e}$ and $v$ present no essential difficulty? We are also motivated by an open question in Glasner and Choksi \cite{GC} about the
justification of the dynamic equations (\ref{NO}) (which have the gradient flow structure) from (\ref{OKeq}) via the recently
established connection between Gamma-convergence and gradient flows \cite{SS}.\\
\h It turns out that one can, at least formally, follow the ``Gamma-convergence'' of gradient flows scheme initiated by 
Sandier and Serfaty \cite{SS} to prove the convergence of (\ref{OKeq}) to (\ref{NO}) because of the following observations:
\begin{myindentpar}{0.5cm}
{\bf 1.} Equation (\ref{OKeq}) is the $H^{-1}$ gradient flow of the Ohta-Kawasaki functional (see Sect. \ref{grOK}) $E_{\e}.$\\
{\bf 2.} The functional $E_{\e}$ Gamma-converges to the nonlocal area functional $E.$\\
{\bf 3.} Equation (\ref{NO}) is the $H^{-1}$-gradient flow of $E$ (see Sect. \ref{grNO}).
\end{myindentpar}
\h Concerning Gamma-convergence, what we will actually need is only the following liminf inequality in the definition of Gamma-convergence (denoted
by $\Gamma$-convergence in what follows) \cite{Braides}:
\begin{myindentpar}{0.8cm}
\it
For any sequence $u^{\e}$ such that $\displaystyle\limsup_{\e\rightarrow 0} E_{\e}(u^{\e})<\infty$, we can extract a subsequence, still labeled $u^{\e}$,
such that $u^{\e}$ converges in $L^{2}(\Omega)$ to a function $u^{0}\in BV(\Omega, \{-1,1\})$ and
\begin{equation*}
 \liminf_{\e\rightarrow 0} E_{\e}(u^{\e})\geq E(u^{0}).
\end{equation*}
\end{myindentpar}
This inequality is well-known. It is a simple consequence of the $\Gamma$-convergence of the Allen-Cahn functional $\displaystyle 
\int_{\Omega}\left(\frac{\varepsilon}{2}\abs{\nabla u}^2 +\frac{1}{\varepsilon}W(u)\right) dx$ to the area functional $\displaystyle\sigma\int_{\Omega}\abs{\nabla u}$, due to Modica-Mortola \cite{MM} 
(see also \cite{Stern}), combined with the fact that the nonlocal term $
\frac{\lambda}{2}\norm{u- \overline{u}_{\Omega}}_{H^{-1}(\Omega)}^2$ is its continuous perturbation.
\subsection{Main results} In this paper, following the ``Gamma-convergence'' of gradient flows scheme in \cite{SS}, we 
prove the convergence of (\ref{OKeq}) to (\ref{NO}) on any smooth domain in space dimensions $N\leq 3$
under the following assumptions:
\begin{myindentpar}{0.6cm} $(i)$ the initial data is well-prepared\\
and \\
$(ii)$ the limiting interface is smooth.
\end{myindentpar}
 Note that the scheme in \cite{SS} when applied to Ginzburg-Landau equation with a finite number of vortices requires no smoothness of the limiting structure. This is due to its finite dimensionality character. Our setting is infinite dimensional and thus extra regularity is required to make sense of the gradient flow. It would be interesting to establish the smoothness of the limiting interface, maybe under 
some additional assumptions on the general initial data.\\
\h Throughout the paper, we always assume that the initial data $u^{\e}_{0}$ satisfies the mass constraint
\begin{equation}
 \overline{u^{\e}_{0\Omega}} = m_{\e} \in (-m,m) ~(0<m<1).
\label{midmass}
\end{equation}

\h Our first main theorem reads
\begin{theorem} Assume that the space dimensions $N\leq 3$. Let $(u^{\varepsilon}, v^{\varepsilon}, w^{\e})$ be the smooth solution 
of (\ref{OKeq}) on $\Omega\times [0, \infty)$ with 
initial data $u_{0}^{\varepsilon}$. Assume that, after 
extraction, $u_{0}^{\varepsilon}$ converges strongly 
in $L^{2}(\Omega)$ to  $u^{0}(\cdot,0)\in$ BV $(\Omega,\{-1,1\})$ with 
interface $\Gamma(0) = \partial\{x\in\Omega: u^{0}(x,0)=1\}\cap\Omega$ consisting of a finite 
number of closed, connected $C^{3}$ hypersurfaces. 
Let $T_{\ast}>0$ be the minimum of the collision time and of the exit time from $\Omega$ of the hypersurfaces under the nonlocal Mullins-Sekerka law 
(\ref{NO}) with the initial interface $\Gamma(0)$.\\
\h Then, after extraction, we have that for 
all $t\in [0, T_{\ast})$, $u^{\varepsilon}(\cdot,t)$ converges strongly in $L^{2}(\Omega)$ to  $u^{0}(\cdot,t)\in$ BV $(\Omega,\{-1,1\})$ 
with interface $\Gamma(t) = \partial\{x\in\Omega: u^{0}(x,t)=1\}\cap\Omega.$ Moreover, under the following assumptions
\begin{myindentpar}{1cm} (A1) The initial data $u_{0}^{\varepsilon}$ is 
well-prepared, i.e., $\lim_{\varepsilon\rightarrow 0} E_{\varepsilon}(u_{0}^{\varepsilon})= E(u^{0}),$ \\
 (A2) $\cup_{t\in[0,T_{\ast})}(\Gamma(t)\times t)$ is a $C^{3,\alpha}$ ($\alpha>0$) space-time hypersurface, that is, this 
hypersurface is $C^{\alpha}$ in time
and for each $t\in [0, T_{\ast})$, $\Gamma(t)$ is $C^{3}$,
\end{myindentpar}
the Ohta-Kawasaki equation converges to motion by nonlocal Mullins-Sekerka law. That is, $w^{\varepsilon}$ converges 
strongly in $L^{2}((0, T_{\ast}), H^{1}(\Omega))$ to $w$ solving (\ref{NO}) with the initial interface $\Gamma(0)$. 

\label{dynamics}
\end{theorem}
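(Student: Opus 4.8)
The plan is to organize the proof around the Sandier--Serfaty scheme, using the three structural facts recorded above: (\ref{OKeq}) is the $H^{-1}$ gradient flow of $E_\e$, $E_\e$ $\Gamma$-converges to $E$, and (\ref{NO}) is the $H^{-1}$ gradient flow of $E$. Differentiating along the flow and integrating by parts with the Neumann conditions yields the basic energy--dissipation identity
\[
E_\e(u^\e_0)-E_\e(u^\e(t))=\int_0^t\int_\Omega|\nabla w^\e|^2\,dx\,ds ,
\]
whose right-hand side equals, in De Giorgi's formulation, $\tfrac12\int_0^t\|\partial_s u^\e\|_{H^{-1}}^2\,ds+\tfrac12\int_0^t|\nabla_{H^{-1}}E_\e(u^\e)|^2\,ds$, where along the gradient flow each of the velocity and slope terms separately coincides with $\int_0^t\!\int_\Omega|\nabla w^\e|^2$. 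The uniform bounds $E_\e(u^\e(t))\le E_\e(u^\e_0)\le C$ and $\int_0^\infty\!\int_\Omega|\nabla w^\e|^2\le C$ give, after extraction, an $L^2$-limit $u^0(\cdot,t)\in BV(\Omega,\{\pm1\})$ and bounded limiting energy measures $\mu_t$. Together with the transport estimate (used to gain continuity in $t$) and the smooth solution of (\ref{NO}) on $[0,T_\ast)$ from \cite{EN}, this yields the first assertion of the theorem, namely $u^\e(\cdot,t)\to u^0(\cdot,t)$ with interface $\Gamma(t)$ for every $t<T_\ast$.

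To pass the dynamics to the limit I would establish the two $\liminf$ inequalities required by \cite{SS}. The velocity bound
\[
\liminf_{\e\to0}\int_0^t\|\partial_s u^\e\|_{H^{-1}}^2\,ds\ \ge\ \int_0^t\|\partial_s u^0\|_{H^{-1}}^2\,ds
\]
follows from the weak convergence $\partial_t u^\e\rightharpoonup\partial_t u^0$ in $L^2((0,t),H^{-1}(\Omega))$ --- itself a consequence of the dissipation bound via $\partial_t u^\e=-\Delta w^\e$ --- together with lower semicontinuity of the $H^{-1}$ norm, once $\partial_t u^0$ is identified with the surface measure $2\mathcal V\,d\mathcal H^{N-1}$ carried by $\Gamma(t)$, $\mathcal V$ its normal velocity.

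The decisive step is the matching slope bound
\[
\liminf_{\e\to0}\int_\Omega|\nabla w^\e|^2\,dx\ \ge\ |\nabla_{H^{-1}}E(\Gamma(t))|^2=\int_\Omega|\nabla w|^2\,dx ,
\]
where $w$ is the harmonic extension off $\Gamma(t)$ of $\sigma\kappa-\lambda v$, i.e.\ the potential in (\ref{NO}). Writing $w^\e=(\e\Delta u^\e-\e^{-1}f(u^\e))-\lambda v^\e$, the nonlocal piece $\lambda v^\e\to\lambda v$ strongly in $H^1$ because $v^\e$ solves a fixed elliptic problem with $L^2$-convergent data; it is therefore a continuous perturbation, and the matter reduces to a lower bound of the type of Conjecture (CH) for the Cahn--Hilliard chemical potential, which I would \emph{prove} rather than assume. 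Concretely, one shows that the weak limit of $w^\e$ is harmonic off $\Gamma(t)$ --- this comes from $\Delta w^\e=-\partial_t u^\e$ concentrating on $\Gamma(t)$ --- and has the correct trace $\sigma\kappa-\lambda v$ there, after which weak lower semicontinuity of $\int_\Omega|\nabla\,\cdot\,|^2$ gives the bound. Identifying the curvature trace $\sigma\kappa$ is exactly where hypothesis (A2) enters and is the step I expect to be the main obstacle: the smoothness of the space-time interface forces $\mu_t$ to have constant, hence unit, multiplicity on $\Gamma(t)$, excluding hidden energy that would destroy the bound and letting the equipartition and concentration of $\mu_t$ on the smooth interface be exploited for a.e.\ $t$.

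With both inequalities in hand I would close the argument as in \cite{SS}. Taking $\liminf$ in the dissipation identity, using (A1) to start the energy at $E(\Gamma(0))$ and the $\liminf$ inequality of $\Gamma$-convergence to bound $E(\Gamma(t))$ from below, the velocity and slope bounds give
\[
E(\Gamma(0))-E(\Gamma(t))\ \ge\ \tfrac12\int_0^t\|\partial_s u^0\|_{H^{-1}}^2\,ds+\tfrac12\int_0^t|\nabla_{H^{-1}}E(\Gamma(s))|^2\,ds .
\]
Since the reverse inequality is the elementary chain-rule energy--dissipation estimate valid for any curve, both are equalities; this propagates the energy convergence $E_\e(u^\e(t))\to E(\Gamma(t))$ and forces $\partial_t u^0=-\nabla_{H^{-1}}E$, that is, $\Gamma(t)$ evolves by the nonlocal Mullins--Sekerka law (\ref{NO}) with potential $w$. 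The equality of dissipations also yields $\int_0^t\!\int_\Omega|\nabla w^\e|^2\to\int_0^t\!\int_\Omega|\nabla w|^2$, and combining this norm convergence with the weak convergence $w^\e\rightharpoonup w$ upgrades it to the asserted strong convergence of $w^\e$ in $L^2((0,T_\ast),H^1(\Omega))$.
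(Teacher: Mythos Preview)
Your overall architecture is right and matches the paper closely: the Sandier--Serfaty scheme, the energy--dissipation identity, the velocity lower bound via weak $H^{-1}$ convergence, and the closing argument that upgrades weak to strong convergence of $w^\e$ by matching the dissipation norms. Where you diverge from the paper is in the slope bound, and there is a genuine gap there.

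You write that the nonlocal piece $\lambda v^\e$ is a continuous perturbation, so ``the matter reduces to a lower bound of the type of Conjecture (CH)'', and that smoothness of the interface ``forces $\mu_t$ to have constant, hence unit, multiplicity''. Both steps are too quick. Constancy of the multiplicity on the smooth $\Gamma(t)$ does follow from (A2) via a constancy theorem for integral varifolds (Sch\"atzle), combined with Tonegawa's convergence theorem and R\"oger's locality result; but what you get is only that $\theta_0(t)$ is a constant \emph{odd integer}, not that it equals $1$. And here the nonlocal term is not a harmless perturbation: if $\theta_0(t)=m$, the weak $H^1$-limit $w$ of $w^\e$ has trace $m\sigma\kappa-\lambda v$ on $\Gamma(t)$, so lower semicontinuity gives only
\[
\liminf_{\e\to 0}\int_\Omega|\nabla w^\e|^2\ \ge\ \norm{m\sigma\kappa-\lambda v}^2_{H^{1/2}_n(\Gamma(t))},
\]
which for $m\ge 3$ can be strictly smaller than $\norm{\sigma\kappa-\lambda v}^2_{H^{1/2}_n(\Gamma(t))}$. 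This is exactly the opposite of the pure Cahn--Hilliard case ($\lambda=0$), where $m^2\sigma^2\norm{\kappa}^2\ge\sigma^2\norm{\kappa}^2$ for any $m$, so Conjecture (CH) really does suffice. In other words, the reduction to (CH) fails precisely because of the nonlocal term you dismissed as continuous.

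The paper resolves this by a \emph{dynamic} argument you are missing: well-preparedness at $t=0$ (A1), energy decrease along the flow, and the $C^\alpha$-in-time part of (A2) together force $\theta_0(t)=1$ for $t$ in a short interval $[0,\delta(0)]$, because multiplicity $\ge 3$ would require more energy than is available from $E_\e(u^\e_0)$. This yields the slope bound only on $[0,\delta(0)]$, not on all of $[0,T_\ast)$ at once. The scheme is then run on $[0,\delta(0)]$ to obtain well-preparedness at $\delta(0)$, and the whole argument is \emph{iterated} to reach $T_\ast$. Your write-up treats the slope bound as a static, time-global statement and omits both the mechanism that singles out multiplicity one and the iteration; these are the missing ideas.
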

\begin{remark}
The restriction $N\leq 3$ on the space dimension enables us to apply Tonegawa's convergence theorem \cite{TonePhase} for diffused interface
whose chemical potential belongs to $W^{1,p}(\Omega)$ with $p> \frac{N}{2}.$ See the proof of Proposition \ref{mild}. In our case, $p=2$. 
\end{remark}
\begin{remark}
 There is a large class of initial data $u^{\e}_{0}$ for which the solutions to (\ref{OKeq}) satisfy (A1) and (A2). This class includes very well-prepared initial data for 
general domains $\Omega$ constructed 
similarly as in \cite{ABC, CCO} in the context of Cahn-Hilliard equation and radially symmetric initial data for spherical domains $\Omega$. In the later case, the H\"{o}lder continuity in time of $u^{\e}$ (as in (\ref{volholder})) implies the H\"{o}lder continuity in time of $\Gamma(t)$.
\end{remark}
\begin{remark}
 The interface $\Gamma(t)$ is contained in the limit measure $\mu(t)$ of $\left( \abs{\nabla u^{\e}(t)}^2 +
\frac{1}{\varepsilon}W(u^{\e}(t))\right) dx$. Throughout, we use the notation $u^{\e}(t) = u^{\e}(\cdot,t)$ etc.
In general, $supp\mu(t)\backslash \Gamma(t)$ is not empty. The presence of hidden 
boundary outside the interface is responsible for this. However, under $(A1)-(A2)$, hidden boundaries will be prevented 
during the evolution of (\ref{OKeq}). 
\end{remark}
In the process of proving Theorem \ref{dynamics}, we also prove Conjecture {\bf (CH)} for space dimensions $N\leq 3.$ We state here as
\begin{theorem}
 Let $\left\{u^{\varepsilon}\right\}_{0<\varepsilon\leq 1}$ be a sequence of $C^{3}$ functions 
satisfying 
\begin{equation}
 \int_{\Omega}\left(\frac{\varepsilon}{2}\abs{\nabla u^{\e}}^2 +\frac{1}{\varepsilon}W(u^{\e})\right) dx \leq M<\infty,~~\overline{u^{\e}_{\Omega}}
= m_{\varepsilon}\in(-m,m)~(0<m<1).\label{fixedmass}\end{equation}
 and let $u\in BV(\Omega,\{-1,1\})$ be its $L^{2}(\Omega)$-limit (after extraction). Assume that 
$\Gamma =\partial^{\ast}\{u=1\}\cap\Omega$ is $C^{2}$ and connected. Furthermore, assume that the space dimension $N =2$ or $3$. Then the 
following inequality holds
 \begin{equation}\displaystyle \liminf_{\e\rightarrow 0}\int_{\Omega}\abs{\nabla (\varepsilon\Delta u^{\varepsilon}-
\varepsilon^{-1}f(u^{\varepsilon}))}^2 dx \geq \sigma^2 \norm{\kappa}^{2}_{H_{n}^{1/2}(\Gamma)}.
\label{DGineq}
\end{equation}
\label{conjCH}
\end{theorem}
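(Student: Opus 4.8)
The plan is to show that, after subtracting its mean, the chemical potential $w^{\e}:=\e\Delta u^{\e}-\e^{-1}f(u^{\e})$ converges weakly in $H^{1}(\Omega)$ to a limit whose trace on $\Gamma$ is $\sigma\kappa$, and then to read off (\ref{DGineq}) from weak lower semicontinuity of the Dirichlet energy together with the variational definition of $\norm{\kappa}_{H^{1/2}_{n}(\Gamma)}$.

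First I would dispose of the trivial case: if $\liminf_{\e\to 0}\int_{\Omega}\abs{\nabla w^{\e}}^{2}\,dx=\infty$, then (\ref{DGineq}) holds vacuously, so I may pass to a subsequence (not relabeled) along which $\int_{\Omega}\abs{\nabla w^{\e}}^{2}\,dx$ converges to a finite limit $L$ and stays uniformly bounded. Writing $\tilde{w}^{\e}:=w^{\e}-\overline{w^{\e}}_{\Omega}$, the Poincar\'e--Wirtinger inequality promotes the gradient bound to a uniform bound in $H^{1}(\Omega)$, so after a further extraction $\tilde{w}^{\e}\rightharpoonup w_{0}$ weakly in $H^{1}(\Omega)$. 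Two observations make the mean harmless: $\int_{\Omega}\abs{\nabla\tilde{w}^{\e}}^{2}=\int_{\Omega}\abs{\nabla w^{\e}}^{2}$, and the seminorm $\norm{\,\cdot\,}_{H^{1/2}_{n}(\Gamma)}$ is invariant under adding a constant to its argument, since a constant shift of an admissible competitor leaves its Dirichlet energy unchanged. It therefore suffices to prove that $w_{0}=\sigma\kappa$ on $\Gamma$ modulo an additive constant.

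This trace identification is the heart of the matter, and it is precisely here that the restriction $N\le 3$ is used. Along the chosen subsequence the Allen--Cahn energies are bounded by $M$ and the $w^{\e}$ are bounded in $W^{1,2}(\Omega)$; since $N\le 3$ one has $2>N/2$, so Tonegawa's convergence theorem \cite{TonePhase} applies (compare Proposition \ref{mild}). It yields that the energy measures $\left(\frac{\e}{2}\abs{\nabla u^{\e}}^{2}+\frac{1}{\e}W(u^{\e})\right)dx$ converge to a limit which, up to the Modica--Mortola constant, is the weight measure $\norm{V}$ of an integral $(N-1)$-varifold $V$ possessing a generalized mean curvature in $L^{2}(\norm{V})$, and that this curvature is tied to the weak limit $w_{0}$ of the chemical potentials by the first-variation relation $w_{0}=\sigma\kappa_{V}$ holding $\norm{V}$-almost everywhere, where $\kappa_{V}$ is the scalar generalized mean curvature of $V$. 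Because $\Gamma=\partial^{\ast}\{u=1\}\cap\Omega$ is a connected $C^{2}$ hypersurface lying in $\mathrm{supp}\,V$ and carrying a constant integer multiplicity, the generalized mean curvature $\kappa_{V}$ agrees $\mathcal{H}^{N-1}$-almost everywhere on $\Gamma$ with the classical mean curvature $\kappa$; hence $w_{0}=\sigma\kappa$ $\mathcal{H}^{N-1}$-almost everywhere on $\Gamma$. As $w_{0}\in H^{1}(\Omega)$ has a single-valued $L^{2}$ trace on the interior $C^{2}$ surface $\Gamma$, this is exactly the identification sought.

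With the trace in hand the estimate follows at once. Since $\int_{\Omega}\abs{\nabla w_{0}}^{2}\,dx\ge\norm{g}_{H^{1/2}_{n}(\Gamma)}^{2}$ for $g$ the trace of $w_{0}$ on $\Gamma$, and since $g=\sigma\kappa$ up to a constant forces $\norm{g}_{H^{1/2}_{n}(\Gamma)}^{2}=\sigma^{2}\norm{\kappa}_{H^{1/2}_{n}(\Gamma)}^{2}$ by the constant-invariance and quadratic homogeneity of the seminorm, weak lower semicontinuity of $w\mapsto\int_{\Omega}\abs{\nabla w}^{2}$ gives
\begin{equation*}
L=\lim_{\e\to0}\int_{\Omega}\abs{\nabla w^{\e}}^{2}\,dx=\lim_{\e\to0}\int_{\Omega}\abs{\nabla\tilde{w}^{\e}}^{2}\,dx\ge\int_{\Omega}\abs{\nabla w_{0}}^{2}\,dx\ge\sigma^{2}\norm{\kappa}_{H^{1/2}_{n}(\Gamma)}^{2},
\end{equation*}
which is (\ref{DGineq}). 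The main obstacle is the trace identification of the previous paragraph: one must guarantee that $\Gamma$ genuinely sits inside $\mathrm{supp}\,V$ with the curvature identity above, despite the possible presence of hidden boundary (i.e.\ $\mathrm{supp}\,V\setminus\Gamma\ne\emptyset$) and of higher multiplicity. For the lower bound this is benign, because we only ever discard mass and never need equality; what genuinely must be verified is that the reduced boundary appears with a positive (odd) multiplicity in the integral varifold and that the $C^{2}$-regularity and connectedness of $\Gamma$ force the multiplicity along $\Gamma$ to be constant, so that the varifold mean curvature reduces to the classical $\kappa$ there.
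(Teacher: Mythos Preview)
Your overall strategy matches the paper's: bound the chemical potential in $H^{1}$, pass to a weak limit, identify its trace on $\Gamma$ via Tonegawa's theorem together with the constancy of multiplicity (Sch\"atzle) and locality of the mean curvature (R\"oger), and conclude by weak lower semicontinuity. The difference in how you obtain the $H^{1}$ bound---subtracting the mean and invoking Poincar\'e--Wirtinger, as opposed to the paper's use of Chen's Lemma~3.4 which exploits the mass constraint $\overline{u^{\e}_{\Omega}}\in(-m,m)$ to bound the full $H^{1}$ norm of $k^{\e}$ directly---is harmless.

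There is, however, a genuine gap in your trace identification. Tonegawa's theorem does \emph{not} give $w_{0}=\sigma\kappa_{V}$; the correct relation (item~(v) of Theorem~\ref{Toneconvthm}) is $\overrightarrow{H_{\mu}}=\dfrac{F}{\theta\sigma}\nu$, i.e.\ $F=\theta\sigma\kappa_{V}$ pointwise, with $\theta$ the integer density. Once Sch\"atzle's constancy theorem forces $\theta\equiv\theta_{0}$ on the connected $C^{2}$ surface $\Gamma$, and R\"oger's locality plus $C^{2}$ regularity identify $\kappa_{V}$ with the classical $\kappa$ there, the conclusion is $w_{0}=\theta_{0}\sigma\kappa$ on $\Gamma$ (up to your additive constant), not $w_{0}=\sigma\kappa$. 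Your final chain of inequalities is therefore missing a step: what one actually obtains is
\[
\liminf_{\e\to0}\int_{\Omega}\abs{\nabla k^{\e}}^{2}\,dx\;\ge\;\theta_{0}^{2}\,\sigma^{2}\,\norm{\kappa}_{H^{1/2}_{n}(\Gamma)}^{2},
\]
and it is precisely the fact---which you mention in passing but never use---that $\theta_{0}$ is an \emph{odd} integer, hence $\theta_{0}^{2}\ge1$, that yields (\ref{DGineq}). Without this observation the argument is incomplete: when $\theta_{0}>1$ your claimed identity $w_{0}=\sigma\kappa$ on $\Gamma$ is simply false, and your ``for the lower bound this is benign'' remark does not apply to the density factor appearing \emph{inside} the trace.
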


 For the case of radially symmetric initial data without well-preparedness, we give a new and short proof of the result of Henry \cite{Henry} 
for all space dimensions  in our next main 
theorem
\begin{theorem}
 Assume that the space dimensions $N\geq 2$ and $\Omega = B_{1}\subset \RR^{N}$. Let $(u^{\varepsilon}, v^{\varepsilon}, w^{\e})$ be the smooth solution 
of (\ref{OKeq}) on $\Omega\times [0, \infty)$ with 
radially symmetric initial data $u_{0}^{\varepsilon}$. Assume that, after extraction, $u_{0}^{\varepsilon}$ converges strongly 
in $L^{2}(\Omega)$ to  $u^{0}(\cdot,0)\in$ BV $(\Omega,\{-1,1\})$ with 
interface $\Gamma(0) = \partial\{x\in\Omega: u^{0}(x,0)=1\}\cap\Omega$ consisting of a finite 
number of spheres. We assume that
\begin{myindentpar}{1cm}
(B) the initial data $u^{\e}_{0}$ has uniformly bounded energy $E_{\e}(u^{\e}_{0})\leq M <\infty$,\\
(BC) there exist $\alpha,\delta, \e_{0}>0$ such that for $\e\leq \e_{0}$, $\abs{u^{\e}_{0}(x)}\geq \alpha$ for 
$x\in S_{\delta}:= \{x\in \Omega: dist(x,\partial\Omega)\leq \delta\}$.
 \end{myindentpar}
Then there exists $T_{\ast}>0$ such that, after extraction, we have that for 
all $t\in [0, T_{\ast})$, $u^{\varepsilon}(\cdot,t)$ converges strongly in $L^{2}(\Omega)$ to  $u^{0}(\cdot,t)\in$ BV $(\Omega,\{-1,1\})$ with 
interface $\Gamma(t) = \partial\{x\in\Omega: u^{0}(x,t)=1\}\cap\Omega$ 
and (\ref{OKeq}) converges to (\ref{NO}) on the time interval $[0, T^{\ast})$. In fact, $T_{\ast}$ can be chosen to be the minimum of the collision time 
and of the exit time from $\Omega$ of the spheres under the nonlocal Mullins-Sekerka law (\ref{NO}) with initial interface $\Gamma(0)$.
\label{sdynamics}
\end{theorem}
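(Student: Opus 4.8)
The plan is to specialize the ``Gamma-convergence of gradient flows'' machinery behind Theorem \ref{dynamics} to the radial setting, where the geometric rigidity of concentric spheres makes both the well-preparedness assumption (A1) and the space-time regularity assumption (A2) unnecessary and simultaneously removes the dimensional restriction $N\leq 3$. First I would note that, by uniqueness of the smooth solution to (\ref{OKeq}) and the invariance of the equation under rotations, the radial symmetry of $u^{\e}_{0}$ is propagated: $u^{\e}(\cdot,t)$, and hence $v^{\e}(\cdot,t)$ and $w^{\e}(\cdot,t)$, remain radially symmetric for all $t\geq 0$. Rewriting everything in the radial variable $r=\abs{x}$ with weight $r^{N-1}$ reduces the analysis to a one-dimensional problem in which the smooth nonlocal term $v^{\e}$ presents no difficulty, and the $H^{-1}$ gradient-flow structure of Section \ref{grOK} gives the dissipation identity $\frac{d}{dt}E_{\e}(u^{\e})=-\int_{\Omega}\abs{\nabla w^{\e}}^2 dx$, so that assumption (B) yields both $E_{\e}(u^{\e}(t))\leq M$ for all $t$ and the uniform bound $\int_{0}^{\infty}\int_{\Omega}\abs{\nabla w^{\e}}^2 dx\,dt\leq M$.

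The core of the argument then rests on two reductions made possible by the radial structure. Since any radial interface is a finite union of concentric spheres, $\Gamma(t)$ is automatically a $C^{\infty}$ hypersurface in space, so that assumption (A2) reduces to H\"{o}lder continuity in time of the finitely many radii. This I would obtain from transport estimates: testing $\partial_{t}u^{\e}=-\Delta w^{\e}$ against the indicator of a ball and integrating gives $\big|\int_{B_{r}}u^{\e}(t_{2})-\int_{B_{r}}u^{\e}(t_{1})\big|\leq\big|\int_{t_{1}}^{t_{2}}\int_{\partial B_{r}}\partial_{n}w^{\e}\big|$, which after a Cauchy-Schwarz estimate and the dissipation bound above controls the motion of each radius and produces the required H\"{o}lder-in-time modulus. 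The second, and more essential, reduction removes the dimensional restriction: I would establish the lower bound (\ref{DGineq}) of Theorem \ref{conjCH} directly in the radial one-dimensional setting for every $N\geq 2$, bypassing the appeal to Tonegawa's theorem that forced $N\leq 3$. Because $\kappa$ is constant, equal to $(N-1)/r_{i}$ on each sphere of radius $r_{i}$, the homogeneous norm $\norm{\kappa}^{2}_{H^{1/2}_{n}(\Gamma)}$ is computable explicitly, and the liminf inequality follows from a direct analysis of the one-dimensional transition profiles.

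The main obstacle is the absence of well-preparedness: without (A1) there may be initial excess energy, and one must rule out that the limiting energy measure $\mu(t)$ of $(\tfrac{\e}{2}\abs{\nabla u^{\e}}^2+\tfrac{1}{\e}W(u^{\e}))dx$ charges spheres carrying no net phase transition, i.e. the hidden boundaries discussed in the introduction. This is precisely where assumption (BC) enters: the uniform lower bound $\abs{u^{\e}_{0}}\geq\alpha$ on the collar $S_{\delta}$ prevents any transition layer from forming near $\partial\Omega$, and, propagated in time by the radial structure together with a clearing-out argument, it keeps the support of $\mu(t)$ away from the boundary up to the exit time. Combined with the integer-multiplicity structure forced by the one-dimensional reduction, this lets me show that the diffuse energy concentrates on the genuine interface $\Gamma(t)$ with multiplicity one, and that any hidden spheres are decoupled from, and do not obstruct, the motion of $\Gamma(t)$; in particular the equipartition of energy needed to close the scheme holds for almost every $t$ even if it fails at $t=0$.

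With these ingredients in hand I would run the Sandier-Serfaty scheme exactly as for Theorem \ref{dynamics}: the Modica-Mortola liminf inequality gives $L^{2}$-convergence of $u^{\e}(\cdot,t)$ to $u^{0}(\cdot,t)\in BV(\Omega,\{-1,1\})$ with interface $\Gamma(t)$; the radial version of (\ref{DGineq}) supplies the lower bound on the dissipation; and the transport estimates furnish the matching of normal velocities, identifying the limit $w$ as the solution of (\ref{NO}). Choosing $T_{\ast}$ to be the minimum of the collision time and the exit time of the spheres evolving under (\ref{NO}) with initial interface $\Gamma(0)$ guarantees that on $[0,T_{\ast})$ the spheres stay disjoint and interior, so the interface structure remains clean and $w^{\e}\to w$ strongly in $L^{2}((0,T_{\ast}),H^{1}(\Omega))$, as claimed.
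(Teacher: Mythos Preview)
Your overall architecture is right and matches the paper's: propagate radial symmetry, use the dissipation identity and (B) for uniform energy and $\int_0^\infty\|\nabla w^\e\|_{L^2}^2\,dt$ bounds, get H\"older-in-time radii, use (BC) to keep the limit measure off $\partial\Omega$, and then feed everything into the Sandier--Serfaty scheme. But the decisive step is handled too loosely. You write that ``the diffuse energy concentrates on the genuine interface $\Gamma(t)$ with multiplicity one, and that any hidden spheres are decoupled from, and do not obstruct, the motion of $\Gamma(t)$.'' These two clauses are in tension, and the second one is the gap: within the Sandier--Serfaty scheme you cannot tolerate hidden spheres at all. If $\mu(t)$ carries any mass off $\Gamma(t)$ or any multiplicity $>1$ on $\Gamma(t)$, then $\liminf_{\e} E_\e(u^\e(t))>E(\Gamma(t))$, the chain of inequalities never collapses to equalities, Cauchy--Schwarz in the velocity/slope balance is strict, and you cannot identify the limiting normal velocity. ``Decoupling'' is not an option; you must actually \emph{rule out} hidden boundaries and higher multiplicity.

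The paper's mechanism for this is specific and is what your sketch is missing. From the dissipation bound and Fatou one gets, for $L^1$-a.e.\ $t_0$, $\liminf_{\e}\|k^\e(t_0)\|_{H^1(\Omega)}<\infty$ where $k^\e=\e\Delta u^\e-\e^{-1}f(u^\e)$. At any such $t_0$ the paper proves an ``instant well-preparedness'' result (Theorem~\ref{instancethm}): under radial symmetry, the energy bound, an $H^1$ bound on $k^\e$, and $\mu(\partial\Omega)=0$, one has $\lim_{\e}E_\e(u^\e)=E(\Gamma)$. The proof has three concrete ingredients you should supply: (i) the discrepancy $\xi^\e=\tfrac{\e}{2}|\nabla u^\e|^2-\tfrac{1}{\e}W(u^\e)$ vanishes in $L^1$ (radial estimate \`a la Chen); (ii) hence the limit of the energy density equals the limit of $|\nabla(u^\e-(u^\e)^3/3)|$, which by radiality and $L^p$ convergence of $u^\e$ is supported exactly on $\Gamma$ --- this kills hidden spheres; (iii) the first-variation identity for $k^\e$ localized to an annulus around each $\partial B_{r_j}$ gives $k=m_j\sigma(N-1)/r_j$ there, while Chen's radial analysis independently gives $k=\pm\sigma(N-1)/r_j$, forcing $m_j=1$. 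Step (iii) is exactly the ``direct radial proof of (\ref{DGineq}) for all $N$'' you allude to, but it needs the $H^1$ control on $k^\e$, which you only have for a.e.\ $t_0$, not for $t=0$. Once well-preparedness holds at such $t_0$, you run the scheme on $[t_0,T_\ast)$ as in Theorem~\ref{dynamics} and recover $\Gamma(0)$ as the initial trace via the H\"older continuity of the radii.
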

\begin{remark}
 We are not seeking optimal conditions on the initial data $u^{\e}_{0}$ to make the proof more transparent. In fact, (BC) can be replaced by
the following condition
\begin{myindentpar}{1cm}
(BC') The limit measure $\mu(0)$ of $\left(\frac{\varepsilon}{2}\abs{\nabla u^{\e}_{0}}^2 +\frac{1}{\varepsilon}W(u^{\e}_{0})\right)dx$
(in the sense of Radon measures) does not concentrate on the boundary $\partial\Omega$: $\mu(0)(\partial\Omega) =0$.
\end{myindentpar}
\end{remark}

As a by-product of our proofs and inspired by a deformation argument in \cite{SS}, we are able to provide a transport
estimate for the Ohta-Kawasaki equation by establishing a convergence of the velocity in its natural energy space. For this purpose, we need a new function space
$H^{-1}_{n}(\Omega)$. It is a modification of the usual $H^{-1}(\Omega)$ and defined as follows.\\
\h Let $\langle,\rangle$ 
denote the pairing between $(H^{1}(\Omega))^{\ast}$ and $H^{1}(\Omega)$. Then, define
\begin{multline*}
H_{n}^{-1}(\Omega) = \{f\in (H^{1}(\Omega))^{\ast}\mid \exists~ g\in H^{1}(\Omega)~\text{such that}~ \langle f,\varphi\rangle = 
\int_{\Omega}\nabla g\cdot\nabla\varphi dx ~~~\forall~ \varphi\in  H^{1}(\Omega)\}.
\end{multline*} 
\h The function $g$ in the above definition is unique up to a constant. We denote by $-\Delta_{n}^{-1} f$ the one with mean $0$ over $\Omega$.
Then, $H_{n}^{-1}(\Omega)$ is a Hilbert space with inner product 
\begin{equation*}
<u,v>_{H_{n}^{-1}(\Omega)} = \int_{\Omega} \nabla(\Delta_{n}^{-1} u)\cdot\nabla(\Delta_{n}^{-1} v) dx~~~\forall~ u,v\in H_{n}^{-1}(\Omega).
\end{equation*}
\h Our final main result states
\begin{theorem}
Let $(u^{\varepsilon}, v^{\varepsilon}, w^{\e})$ be the smooth solution 
of (\ref{OKeq}) on $\Omega\times [0, \infty)$ as in Theorem \ref{dynamics} or Theorem
\ref{sdynamics}. Let $u^{0}(\cdot,t)$ be the limit in $L^{2}(\Omega)$ of $u^{\e}(\cdot,t)$ with smooth interface $\Gamma(t)$ 
satisfying (\ref{NO}). Let $\partial_{t}{\bf\Gamma}\in (C^{1}_{c}(\Omega))^{N}$ be any smooth extension of $(\partial_{t}\Gamma)\stackrel{\rightarrow}{n} $ where 
$\stackrel{\rightarrow}{n}$ is the unit outernormal to $\Gamma(t)$. Then we can find a small perturbation $\partial_{t}{\bf \Gamma}^{\e}$ of 
$\partial_{t}{\bf \Gamma}$ such that
\begin{equation}
 \lim_{\e\rightarrow 0}\norm{\partial_{t}{\bf \Gamma}^{\e}-\partial_{t}{\bf \Gamma} }_{C^{1}_{0}(\Omega)} =0~\text{for
each time slice } t \geq 0
\label{smallde}
\end{equation}
and
\begin{equation}
 \lim_{\e\rightarrow 0}\int_{t_{1}}^{T^{\ast}}\norm{\partial_{t}u^{\e} + \partial_{t}{\bf \Gamma}^{\e}\cdot \nabla u^{\e}}^2_{H^{-1}_{n}(\Omega)}
 dt =0 ~\text{for
all } t_{1} >0.
\label{veloconv}
\end{equation}
In the case of well-prepared initial data, (\ref{veloconv}) also holds for $t_{1}=0$.
\label{transport}
\end{theorem}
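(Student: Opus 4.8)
The plan is to exploit the $H^{-1}$ gradient-flow structure of (\ref{OKeq}) together with the strong convergences already secured in Theorem \ref{dynamics} (resp.\ Theorem \ref{sdynamics}), and to reduce the whole statement to a single algebraic expansion of the square of the $H^{-1}_{n}$-norm. The starting observation is that, since $\partial_{t}u^{\e}=-\Delta w^{\e}$ has zero mean (mass conservation, using the Neumann condition on $w^{\e}$), the $H^{-1}_{n}$-representative of $\partial_{t}u^{\e}$ is exactly $w^{\e}-\overline{w^{\e}}_{\Omega}$; hence $\norm{\partial_{t}u^{\e}}_{H^{-1}_{n}(\Omega)}^{2}=\int_{\Omega}\abs{\nabla w^{\e}}^2dx$. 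This identifies the ``metric'' term with the energy-dissipation density of the flow, which is precisely what Theorem \ref{dynamics} controls. I would then build the whole proof around expanding $\norm{\partial_{t}u^{\e}+\partial_{t}{\bf\Gamma}^{\e}\cdot\nabla u^{\e}}^2_{H^{-1}_{n}(\Omega)}$ and passing each resulting term to the limit.

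First I would fix the perturbation $\partial_{t}{\bf\Gamma}^{\e}$. The only role it must play is to guarantee that $\xi^{\e}:=\partial_{t}u^{\e}+\partial_{t}{\bf\Gamma}^{\e}\cdot\nabla u^{\e}$ lies in $H^{-1}_{n}(\Omega)$, i.e.\ that $\int_{\Omega}\partial_{t}{\bf\Gamma}^{\e}\cdot\nabla u^{\e}\,dx=0$ (recall $\int_{\Omega}\partial_{t}u^{\e}=0$). Writing $\int_{\Omega}{\bf V}\cdot\nabla u^{\e}=-\int_{\Omega}(\nabla\cdot{\bf V})u^{\e}$ for ${\bf V}\in (C^{1}_{c}(\Omega))^{N}$, and using $u^{\e}\to u^{0}$ in $L^{2}(\Omega)$ together with the volume preservation $\int_{\Omega}\partial_{t}{\bf\Gamma}\cdot\nabla u^{0}=0$ of $\Omega_{t}^{+}$, one sees that the defect $\int_{\Omega}\partial_{t}{\bf\Gamma}\cdot\nabla u^{\e}$ tends to $0$. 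Hence I would set $\partial_{t}{\bf\Gamma}^{\e}=\partial_{t}{\bf\Gamma}-c^{\e}{\bf b}$ for a fixed ${\bf b}\in (C^{1}_{c}(\Omega))^{N}$ with $\int_{\Omega}(\nabla\cdot{\bf b})u^{0}\neq 0$ and a scalar $c^{\e}\to0$ (locally uniformly in $t$) chosen to cancel the defect; this yields (\ref{smallde}).

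The heart of the argument is the expansion
\[
\norm{\xi^{\e}}_{H^{-1}_{n}(\Omega)}^{2}=\norm{\partial_{t}u^{\e}}_{H^{-1}_{n}}^{2}+2\langle\partial_{t}u^{\e},\partial_{t}{\bf\Gamma}^{\e}\cdot\nabla u^{\e}\rangle_{H^{-1}_{n}}+\norm{\partial_{t}{\bf\Gamma}^{\e}\cdot\nabla u^{\e}}_{H^{-1}_{n}}^{2}.
\]
The decisive simplification, in the spirit of the deformation computation of \cite{SS}, is that the cross term collapses to an ordinary $L^{2}$ pairing: using the representative of $\partial_{t}u^{\e}$ above and integrating by parts, $\langle\partial_{t}u^{\e},\partial_{t}{\bf\Gamma}^{\e}\cdot\nabla u^{\e}\rangle_{H^{-1}_{n}}=\int_{\Omega}w^{\e}\,(\partial_{t}{\bf\Gamma}^{\e}\cdot\nabla u^{\e})\,dx$, which is exactly $\frac{d}{ds}\big|_{s=0}E_{\e}(u^{\e}\circ\Phi^{\e}_{-s})$ for the flow $\Phi^{\e}_{s}$ of $\partial_{t}{\bf\Gamma}^{\e}$. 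I would then pass to the limit term by term on $Q:=(t_{1},T_{\ast})\times\Omega$. By Theorem \ref{dynamics} (resp.\ \ref{sdynamics}), $w^{\e}\to w$ strongly in $L^{2}((t_{1},T_{\ast}),H^{1}(\Omega))$ and $u^{\e}\to u^{0}$ strongly in $L^{2}$, so the first term tends to $I:=\int_{t_{1}}^{T_{\ast}}\int_{\Omega}\abs{\nabla w}^2$. For the cross term, integrating by parts in $x$ and using these convergences gives $\int w^{\e}(\partial_{t}{\bf\Gamma}^{\e}\cdot\nabla u^{\e})\to\int w\,(\partial_{t}{\bf\Gamma}\cdot\nabla u^{0})$; since the limit dynamics (\ref{NO}) give the transport identity $\partial_{t}u^{0}+\partial_{t}{\bf\Gamma}\cdot\nabla u^{0}=0$ and, from $\partial_{t}u^{\e}=-\Delta w^{\e}$, $\partial_{t}u^{0}=-\Delta w$, this equals $\int w\,\Delta w=-I$. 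For the third term, $\partial_{t}{\bf\Gamma}^{\e}\cdot\nabla u^{\e}\rightharpoonup\partial_{t}{\bf\Gamma}\cdot\nabla u^{0}$ weakly in $H^{-1}(\Omega)$, so the associated Neumann potentials converge weakly in $H^{1}$; after one integration by parts the remaining pairing couples a weakly-$L^{2}$ gradient against the strongly-$L^{2}$ factor $u^{\e}\partial_{t}{\bf\Gamma}^{\e}$, whence the third term tends to $\norm{\partial_{t}{\bf\Gamma}\cdot\nabla u^{0}}_{H^{-1}_{n}}^{2}=\norm{\partial_{t}u^{0}}_{H^{-1}_{n}}^{2}=I$. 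Adding up, $\int_{t_{1}}^{T_{\ast}}\norm{\xi^{\e}}_{H^{-1}_{n}}^{2}\to I-2I+I=0$, which is (\ref{veloconv}).

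The final point is the admissibility of $t_{1}$. Everything above needs only the strong convergence of $w^{\e}$ in $L^{2}((t_{1},T_{\ast}),H^{1})$, equivalently the convergence of the dissipated energy $E_{\e}(u^{\e}(t_{1}))-E_{\e}(u^{\e}(T_{\ast}))\to E(\Gamma(t_{1}))-E(\Gamma(T_{\ast}))$. For $t_{1}>0$ this holds, after extraction, for a.e.\ slice by the energy convergence already established, while at $t_{1}=0$ it requires the well-preparedness (A1) to rule out an initial energy drop; this accounts for the two cases in the statement. I expect the genuine difficulty to lie entirely upstream, namely in the strong $L^{2}((0,T_{\ast}),H^{1})$ convergence of $w^{\e}$ of Theorem \ref{dynamics}, which rests on the De Giorgi-type lower bound (Theorem \ref{conjCH}) and the constancy of the multiplicity of $\Gamma(t)$. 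Granting that input, the transport estimate is a soft consequence; the only remaining technical care is the mean-zero correction of the velocity field and the justification of the weak $H^{-1}$-limit of the concentrating quantity $\partial_{t}{\bf\Gamma}^{\e}\cdot\nabla u^{\e}$, both of which are controlled by the strong $L^{2}$-convergence of $u^{\e}$.
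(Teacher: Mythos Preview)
Your proof is correct and follows the same overall architecture as the paper: construct the mean-zero correction $\partial_{t}{\bf\Gamma}^{\e}$ (your construction is exactly Lemma~\ref{modifiedV}), expand the square of the $H^{-1}_{n}$-norm into three pieces, and pass each to the limit using the strong $L^{2}((t_{1},T_{\ast}),H^{1})$ convergence of $w^{\e}$ already furnished by Theorem~\ref{dynamics} (resp.\ Theorem~\ref{sdynamics}).

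Where you differ is in how the cross term and the quadratic term $\norm{\partial_{t}{\bf\Gamma}^{\e}\cdot\nabla u^{\e}}^{2}_{H^{-1}_{n}}$ are handled. The paper packages both into a general deformation statement (Proposition~\ref{deform}): for the cross term it proves $\lim_{\e}\frac{d}{dt}\big|_{t=0}E_{\e}(w^{\e}(t))=\frac{d}{dt}\big|_{t=0}E(w(t))$ via Reshetnyak's theorem (single multiplicity $\Rightarrow$ convergence of $\e\nabla u^{\e}\otimes\nabla u^{\e}$), and for the quadratic term it carries out explicit Green's-function computations ((\ref{firstg})--(\ref{velolim4})) to identify the limit with $\norm{V}^{2}_{Y}$. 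You instead observe that the cross term is literally $\int_{\Omega}w^{\e}(\partial_{t}{\bf\Gamma}^{\e}\cdot\nabla u^{\e})$, integrate by parts once, and pass to the limit using only the strong convergences of $w^{\e}$ and $u^{\e}$; for the quadratic term you write $\norm{g^{\e}}^{2}_{H^{-1}_{n}}=-\int u^{\e}\,\mathrm{div}(h^{\e}{\bf V}^{\e})$ and use a weak--strong pairing ($\nabla h^{\e}\rightharpoonup\nabla h^{0}$ weakly in $L^{2}$ against $u^{\e}{\bf V}^{\e}\to u^{0}{\bf V}$ strongly, plus Rellich for $h^{\e}\to h^{0}$). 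This genuinely avoids both Reshetnyak and the Green's-function identities, at the cost of a small density argument (to justify $\langle g^{0},h^{0}\rangle=-\int u^{0}\mathrm{div}(h^{0}{\bf V})$ for $h^{0}\in H^{1}$ rather than $C^{1}_{c}$) that you should make explicit. The paper's route is more modular and keeps the Sandier--Serfaty deformation construction visible; yours is shorter and relies only on the output of Theorem~\ref{dynamics}, not on re-deriving consequences of single multiplicity.
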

\begin{remark}
To our knowledge, in the context of the Cahn-Hilliard and Ohta-Kawasaki equations, the transport estimate (\ref{veloconv}) is new. It expresses that $u^{\e}$ is very close to being simply transported at the velocity
$\partial_{t}\Gamma$ around $\Gamma$. The space $L^{2}((0, T^{\ast}), H^{-1}_{n}(\Omega))$ is the natural energy space
for the velocity $\partial_{t}u^{\e}.$ From the definition of $H^{-1}_{n}(\Omega)$(see also section \ref{grOK}), we have
\begin{equation*}
 \int_{0}^{T^{\ast}}\norm{\partial_{t}u^{\e} }^2_{H^{-1}_{n}(\Omega)} dt =\int_{0}^{T^{\ast}} \norm{\nabla w^{\e}(t)}^{2}_{L^{2}(\Omega)} dt
= E_{\e}(u^{\e}(0)) - E_{\e}(u^{\e}(T^{\ast})) \leq M.
\end{equation*}
\end{remark}
\begin{remark}
 In general, $\partial_{t}{\bf \Gamma}\cdot\nabla u^{\e}$ does not belong to $H^{-1}_{n}(\Omega)$. Thus, we need a small perturbation 
$\partial_{t}{\bf \Gamma}^{\e}$ of
$\partial_{t}{\bf \Gamma}$ as in (\ref{smallde}) such that $\partial_{t}{\bf \Gamma}^{\e}\cdot \nabla u^{\e}\in H^{-1}_{n}(\Omega).$ 
\end{remark}
\begin{remark}
Setting $\lambda =0$ in Theorems \ref{dynamics}, \ref{sdynamics}\&\ref{transport}, we recover convergence 
results for the Cahn-Hilliard equation to motion by Mullins-Sekerka law. Note that, due to the validity of Conjecture ({\bf CH}) established
in Theorem \ref{conjCH} for space dimensions $N\leq 3$, we are able to remove condition $(A3)$ of Theorem 1.3 in our previous paper
\cite{LeCH}.
\end{remark}
\subsection{Ideas of the proofs}
We conclude this introduction with some remarks on the proofs of the main theorems. 
\begin{myindentpar}{0.2cm}
{\bf 1.} \begin{myindentpar}{0.2cm} (i)The structure of the proof
of Theorem \ref{dynamics} is essentially the same as 
that of the convergence of Cahn-Hilliard equation 
to motion by Mullins-Sekerka law in \cite{LeCH} with the nonlocal term added. However, the main ingredient and difficulty, Lemma \ref{mainlemma}, is not assumed as 
it was in Theorem 1.3 of \cite{LeCH}. To prove this lemma, we make use of Tonegawa's convergence theorem, Theorem \ref{Toneconvthm}; R\"{o}ger's locality theorem, Theorem
\ref{Rogerlocality}; and finally, Sch\"{a}tzle's constancy
theorem, Theorem \ref{Schatzle}, on the multiplicity of the smooth limiting interface. Our proof reveals that the fundamental 
difference between (\ref{OKeq}) and the Cahn-Hilliard equation lies in the potential higher multiplicity of the 
short-range contribution in $E_{\e}$. Precisely speaking, in the limit as $\e\rightarrow 0$ (after extraction), $u^{\e}(t)\rightarrow u^{0}(t) \in BV(\Omega, \{-1,1\})$, the long-range contribution always has multiplicity one, i.e, 
\begin{equation*}
\lim_{\e\rightarrow 0}\frac{\lambda}{2}\norm{u^{\e}(t)- \overline{u^{\e}(t)}_{\Omega}}_{H^{-1}(\Omega)}^2 = \frac{\lambda}{2}\norm{u^{0}(t)- \overline{u^{0}(t)}_{\Omega}}_{H^{-1}(\Omega)}^2.
\end{equation*}
Meanwhile, the short-range contribution may have higher multiplicity, that is,
\begin{equation*}
\lim_{\e\rightarrow 0}\int_{\Omega}\left(\frac{\varepsilon}{2}\abs{\nabla u^{\e}(t)}^2 +\frac{1}{\varepsilon}W(u^{\e}(t))\right) dx = m(t)\sigma \int_{\Omega}\abs{\nabla u^{0}(t)}.
\end{equation*}
Here the multiplicity $m(t)$ is an odd integer, possibly larger than $1$. The statement of Lemma 4.1 is only
true for $m(t) =1$. See also Remark \ref{DGrk}. If $m(t)>1$, which corresponds to the case $u^{\e}(t)$ folds $m(t)$ times 
around the interface $\Gamma(t)$, then our 
approach using the scheme in \cite{SS} completely breaks down. \\
(ii) As mentioned above, the proof of Lemma \ref{mainlemma} only works for single
multiplicity ($m(t)=1$) of the limiting interface and for short time. Similar
result in the Cahn-Hilliard case (see Theorem 1.2 in \cite{LeCH} or Theorem \ref{conjCH} in this paper) works for any constant multiplicity and long
time. Nevertheless, we are able to get around this higher multiplicity issue. Our idea is to use the time continuity
of the limiting interface to 
prove single multiplicity of the short-range contribution for short time, thus establishing Lemma \ref{mainlemma}. Then, to prove Theorem \ref{dynamics},
we will first use 
the $\Gamma$-convergence scheme to prove well-preparedness of solution to (\ref{OKeq}) for short time. The process will be iterated until 
the hypersurfaces in the interface $\Gamma(t)$ collide or exit to the boundary. \\
(iii) Our proof of inequality (\ref{dissipation}) in Lemma \ref{mainlemma} relies heavily on the well-preparedness of the initial data. In the
original gradient flows scheme \cite{SS} and for the local evolution laws like Allen-Cahn and Cahn-Hilliard, we do not have to resort to dynamics (see (C2) in Section
\ref{generalfr} and Theorem \ref{conjCH}). With the presence of the nonlocal terms, a purely static statement similar to 
(\ref{dissipation}) may be false except when the multiplicity one theorem of R\"{o}ger-Tonegawa \cite{RT} can be improved to the case of 
$W^{1,p}$ ($N/2 <p\leq N$) chemical potentials. As far as we know, this issue has not been resolved yet. 
\end{myindentpar}
{\bf 2.} In Theorem \ref{sdynamics}, 
the crucial observation that allows us to apply the $\Gamma$-convergence of gradient flows scheme
is that, in the presence of spherical symmetry, the evolution equation (\ref{OKeq}) creates well-preparedness of the evolving interface almost
instantaneously. See (\ref{Fatou}) and Theorem \ref{instancethm}.\\
{\bf 3.} The proof of Theorem \ref{transport} is based on the well-preparedness in time of the evolving interface and a 
deformation argument presented in Proposition \ref{deform}. Its
basic idea is to ``lift'' a curve in the limiting space to a curve in the original space in such a way that the slope
of the lifted curve is that of the original one, and that the energy decreases by that of the 
limiting energy; see (\ref{veloineq}) and (\ref{slopeineq}). This deformation argument was first proposed 
in the abstract setting in \cite{SS}. The idea and proof of transport estimate based on this deformation 
argument are easy to state and prove. The difficulty is displaced into carrying on a concrete construction for each specific problem. 
\end{myindentpar}
\h The rest of the paper is organized as follows. In Section \ref{grsec}, we interpret
the Ohta-Kawasaki and nonlocal Mullins-Sekerka equations as gradient flows and introduce necessary notations and function spaces. In Section 
\ref{flowsec}, we briefly recall the $\Gamma$-convergence of gradient flows scheme in \cite{SS} and its particularization to our problem.
Then we prove a main inequality \`{a} la De Giorgi in Section \ref{mainlemsec} that will 
be crucial in the proof of Theorem \ref{dynamics}. We will present the proof of Theorem \ref{conjCH}
in Section \ref{CHpr}. Section \ref{pfsec} is devoted to the proof of Theorem \ref{dynamics}. The proof of Theorem
\ref{sdynamics} will be carried out in Section \ref{sthmsec}. In the final section, Section \ref{transsec}, we will prove Theorem  \ref{transport}.\\
\h {\it Note on constants and notations.} In this paper, we denote by $M$ a universal upper bound for the energy
of the initial data $E_{\e}(u^{\e}_{0})\leq M$ and $C$ a generic constant that may change from line to line but does not depend on $\e$.
For any function $f$ of space time variables $(x,t)$, we will write $f(t)$ for $f(\cdot, t).$\\
{\it Acknowledgements.} The author would like to thank Professor Sylvia Serfaty for her useful comments and suggestions and for communicating
 the proof of Theorem \ref{Schatzle} during the 
preparation of this paper. I am grateful to Professor Mark A. Peletier for his constructive comments and interesting discussion 
on an earlier version of 
the article.
The author is very grateful to the referees for 
their careful reading, useful comments and sharp critisms which resulted in a hopefully improved version of the 
original manuscript.

\section{Ohta-Kawasaki and nonlocal Mullins-Sekerka as gradient flows}
\label{grsec}
\h In this section, we introduce some notations used throughout the paper. In Section \ref{grOK}, we derive the gradient flow of 
the Ohta-Kawasaki functional defined in (\ref{OKfn}) with respect to an appropriately defined $H^{-1}$ structure. In Section \ref{grNO}, we present derivations of the gradient flows of 
the nonlocal area functional $E(u)$ defined in (\ref{NOener}) with respect to different structures. These derivations allow us to interpret
(\ref{OKeq}) and (\ref{NO}) as gradient flows. See \cite{HNR} for a different approach in interpreting (\ref{NO})
as a gradient flow. \\
\h The notion of gradient flow alluded to in this paper should be understood as follows.  Let $F$ be a $C^{1}$ 
functional defined over $\mathcal{M}$, an open subset of an affine space associated 
to a Hilbert space $X$ with inner product $<\cdot>_{X}$. By the $C^{1}$ character of $F,$ we can 
define the differential $dF(u)$ of $F$ at $u\in \mathcal{M}$ and denote by  $\nabla_{X}F(u)$ the 
vector of $X$ that represents it. That is, for all $\varphi\in \mathcal{M}$, we have
\begin{equation*}
 \frac{d}{dt}\mid_{t=0} F(u + t\varphi) = dF(u)\varphi = <\nabla_{X}F(u),\varphi>_{X}.
\end{equation*}
The gradient flow of $F$ with respect to the structure $X$ is the evolution equation
\begin{equation*}
 \partial_{t} u = -\nabla_{X}F(u).
\end{equation*}
\subsection{The gradient flows of the Ohta-Kawasaki functional}
\label{grOK}
Recall from the Introduction that $H_{n}^{-1}(\Omega)$ is a Hilbert space with inner product 
\begin{equation}
<u,v>_{H_{n}^{-1}(\Omega)} = \int_{\Omega} \nabla(\Delta_{n}^{-1} u)\cdot\nabla(\Delta_{n}^{-1} v) dx~~~\forall~ u,v\in H_{n}^{-1}(\Omega).
\label{xlemma}
\end{equation}
The gradient of the functional $E_{\varepsilon}$ defined by (\ref{OKfn}) with respect to the structure $H_{n}^{-1}(\Omega)$ 
is 
\begin{equation}\nabla_{H_{n}^{-1}(\Omega)} E_{\varepsilon}(u) = -\Delta(-\varepsilon \Delta u + \varepsilon^{-1} f(u) + 
\lambda \Delta^{-1}(u-\overline{u_{\Omega}})).\label{Xvareq}\end{equation}
Therefore, equation (\ref{OKeq}) is the gradient flow of $E_{\e}$ with respect to the $H^{-1}_{n}(\Omega)$ structure. 
\subsection{The gradient flows of the nonlocal area functional}
\label{grNO}
Consider a subdomain $\Omega^{-}$ of $\Omega$ with smooth boundary $\Gamma$. 
Assume further that $\Gamma$ {\it is the union of a finite number of disjoint closed surfaces}. This is the case of the interface $\Gamma(t)$ in our Theorems. 
Denote by $\Omega^{+}$ the set $\Omega\backslash \overline{\Omega^{-}}.$
Let $H^{1/2}(\Gamma)$ be the space of traces on
 $\Gamma$ of $H^{1}(\Omega^{-})$ functions. For $f \in H^{1/2}(\Gamma),$ let $X(f)$ be the set of extensions of $f$ into $H^{1}(\Omega)$ functions over $\Omega$. Then there 
exists a unique function $\tilde{f}\in X(f)$ minimizing the Dirichlet functional 
$\displaystyle \int_{\Omega} \abs{\nabla u}^{2} dx$ over $X(f).$ The function $\tilde{f}$ satisfies 
\begin{equation}
\Delta \tilde{f} =0 ~\text{in}~ \Omega\backslash \Gamma,~ \tilde{f}=f ~~\text{on} ~~\Gamma, ~\text{and}~ \frac{\partial \tilde{f}}{\partial n}=0~~\text{on} ~~\partial \Omega.  
\label{chartilde}
\end{equation}
\h With this $\tilde{f},$ we let $\Delta_{\Gamma} (f)= - \left [\frac{\partial \tilde{f}}{\partial n}\right]_ {\Gamma}$ ({\it The reader
will have not failed to note that, with abuse of notation, $\Delta_{\Gamma}$ in our definition is not the Laplace-Beltrami operator of 
$\Gamma$} ). 
Then, in the sense of distributions 
\begin{equation}
\Delta \tilde{f} = \Delta_{\Gamma}(f)\delta_{\Gamma}.
\label{twoLap}
\end{equation}
Now, for $f, u, v\in H^{1/2}(\Gamma)$, define
\begin{equation}  \norm{f}_{H_{n}^{1/2}(\Gamma)}=\norm{\nabla\tilde{f}}_{L^{2}(\Omega)},
~<u,v>_{H_{n}^{1/2}(\Gamma)}= <\nabla \tilde{u}, \nabla \tilde{v}>_{L^{2}(\Omega)}\equiv -\int_{\Gamma}(\Delta_{\Gamma} u)v~d\mathcal{H}^{N-1}.\label{seminorm}\end{equation}  Observe that $\norm{f}_{H_{n}^{1/2}(\Gamma)}=0$ iff $f$ is a constant on $\Gamma.$ So we can define the equivalence relation $\sim$ in $H^{1/2}(\Gamma):$ $f_{1}\sim f_{2}$ iff $\norm{f_{1}-f_{2}}_{H_{n}^{1/2}(\Gamma)} = 0.$\\
{\bf Notation.} Let $H_{n}^{1/2}(\Gamma)$ be the quotient space $H^{1/2}(\Gamma)/\sim.$ \\
\h Then, $H_{n}^{1/2}(\Gamma)$ with inner product $<\cdot,\cdot>_{H_{n}^{1/2}(\Gamma)}\label{mainlem}$ is a Hilbert space. Let 
$H_{n}^{-1/2}(\Gamma)$ be the dual of $H_{n}^{1/2}(\Gamma)$ with the usual dual norm $\norm{\cdot}_{H_{n}^{-1/2}(\Gamma)}.$ Then, we have 
\begin{lemma} (\cite{LeCH})
 (i) For each $u\in H_{n}^{-1/2}(\Gamma),$ there exists a unique $u^{\ast}\in H_{n}^{1/2}(\Gamma)$, denoted $\Delta_{\Gamma}^{-1} u$, 
such that $u = \Delta_{\Gamma} u^{\ast}$ and $\norm{u}_{H_{n}^{-1/2}(\Gamma)}=\norm{u^{\ast}}_{H_{n}^{1/2}(\Gamma)}$.
Moreover, for all $v\in H_{n}^{1/2}(\Gamma),$ $$\langle u,v\rangle_{H_{n}^{-1/2}(\Gamma)\times H_{n}^{1/2}(\Gamma)} = -<u^{\ast},v>_{H_{n}^{1/2}(\Gamma)}.$$
(ii) $H_{n}^{-1/2}(\Gamma)$ is a Hilbert space with inner product 
\begin{equation*}<u,v>_{H_{n}^{-1/2}(\Gamma)} = <\Delta_{\Gamma}^{-1} u, 
\Delta_{\Gamma}^{-1} v>_{H_{n}^{1/2}(\Gamma)}~~~\forall u,v\in H_{n}^{-1/2}(\Gamma). 
\end{equation*}
\label{downhalf}\end{lemma}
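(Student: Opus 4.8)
The plan is to deduce the entire statement from the Riesz representation theorem, exploiting the fact (just established) that $H_{n}^{1/2}(\Gamma)$ is a Hilbert space and that the operator $\Delta_{\Gamma}$ realizes the $H_{n}^{1/2}$ inner product as a duality pairing. The preliminary step is to recast $\Delta_{\Gamma}$ as a bounded linear map from $H_{n}^{1/2}(\Gamma)$ into its dual $H_{n}^{-1/2}(\Gamma)$: for $g\in H_{n}^{1/2}(\Gamma)$, I would define $\Delta_{\Gamma} g\in H_{n}^{-1/2}(\Gamma)$ by
\begin{equation*}
\langle \Delta_{\Gamma} g, v\rangle = -<g,v>_{H_{n}^{1/2}(\Gamma)}\qquad \forall\, v\in H_{n}^{1/2}(\Gamma),
\end{equation*}
which, in view of (\ref{seminorm}), is exactly the distributional identity $\Delta_{\Gamma} g=-[\partial\tilde g/\partial n]_{\Gamma}$ tested against $v$. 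This functional is bounded by Cauchy--Schwarz, so $\Delta_{\Gamma}$ indeed takes values in $H_{n}^{-1/2}(\Gamma)$.

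For part (i) I would first observe that $\Delta_{\Gamma}$ is an isometry onto its image: taking the supremum of $\abs{<g,v>_{H_{n}^{1/2}(\Gamma)}}/\norm{v}_{H_{n}^{1/2}(\Gamma)}$ over $v\neq 0$ and combining Cauchy--Schwarz with the admissible choice $v=g$ gives $\norm{\Delta_{\Gamma} g}_{H_{n}^{-1/2}(\Gamma)}=\norm{g}_{H_{n}^{1/2}(\Gamma)}$; in particular $\Delta_{\Gamma}$ is injective. Surjectivity is where Riesz enters: given $u\in H_{n}^{-1/2}(\Gamma)$, the Riesz representation theorem applied to the Hilbert space $H_{n}^{1/2}(\Gamma)$ produces a unique $w\in H_{n}^{1/2}(\Gamma)$ with $\langle u,v\rangle = <w,v>_{H_{n}^{1/2}(\Gamma)}$ for all $v$ and $\norm{u}_{H_{n}^{-1/2}(\Gamma)}=\norm{w}_{H_{n}^{1/2}(\Gamma)}$. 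Setting $u^{\ast}:=-w$ then yields $\langle u,v\rangle = -<u^{\ast},v>_{H_{n}^{1/2}(\Gamma)}=\langle \Delta_{\Gamma} u^{\ast},v\rangle$, i.e.\ $u=\Delta_{\Gamma} u^{\ast}$, together with the asserted pairing identity and the norm equality $\norm{u}_{H_{n}^{-1/2}(\Gamma)}=\norm{u^{\ast}}_{H_{n}^{1/2}(\Gamma)}$. Uniqueness of $u^{\ast}$ follows from the injectivity of $\Delta_{\Gamma}$, so the notation $\Delta_{\Gamma}^{-1} u:=u^{\ast}$ is justified.

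Part (ii) is then formal. By (i), $\Delta_{\Gamma}^{-1}$ is a bijective isometry from $H_{n}^{-1/2}(\Gamma)$ onto $H_{n}^{1/2}(\Gamma)$, so pulling back the inner product of $H_{n}^{1/2}(\Gamma)$ along it, i.e.\ setting $<u,v>_{H_{n}^{-1/2}(\Gamma)}:=<\Delta_{\Gamma}^{-1} u,\Delta_{\Gamma}^{-1} v>_{H_{n}^{1/2}(\Gamma)}$, defines a genuine (bilinear, symmetric, positive-definite) inner product. The norm it induces satisfies $<u,u>_{H_{n}^{-1/2}(\Gamma)}=\norm{\Delta_{\Gamma}^{-1} u}^{2}_{H_{n}^{1/2}(\Gamma)}=\norm{u}^{2}_{H_{n}^{-1/2}(\Gamma)}$ by the isometry from (i), so the inner product recovers the dual norm, and $H_{n}^{-1/2}(\Gamma)$, being the dual of a normed space, is complete, hence a Hilbert space.

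The only genuinely delicate point, and the step I would treat most carefully, is giving rigorous meaning to the relation $u=\Delta_{\Gamma} u^{\ast}$ and to the surface-integral shorthand $-\int_{\Gamma}(\Delta_{\Gamma} u^{\ast})v\,d\mathcal{H}^{N-1}$ when $u^{\ast}$ lies only in $H_{n}^{1/2}(\Gamma)$: in that generality $\Delta_{\Gamma} u^{\ast}=-[\partial\widetilde{u^{\ast}}/\partial n]_{\Gamma}$ is a priori only an element of a negative-order space on $\Gamma$, so the pointwise integral need not make classical sense. This is circumvented by interpreting every such expression through the Dirichlet form $<\nabla\widetilde{u^{\ast}},\nabla\tilde v>_{L^{2}(\Omega)}$ appearing in (\ref{seminorm}), which is manifestly well-defined and, via the distributional identity (\ref{twoLap}) and integration by parts, equals the claimed pairing; the surface integral is then read as this duality bracket. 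One should also track the quotient by constants, but since both the inner product and $\Delta_{\Gamma}$ annihilate constants by the characterization following (\ref{seminorm}), all objects descend to $H_{n}^{1/2}(\Gamma)$ without further ado, and the sign conventions used above are pinned down once and for all by the defining relation for $\Delta_{\Gamma}$.
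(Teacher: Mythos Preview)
Your argument is correct. The paper does not supply its own proof of this lemma; it simply cites the author's earlier work \cite{LeCH}, so there is no in-paper proof to compare against. Your approach via the Riesz representation theorem is the standard one for identifying a Hilbert space with its dual through an isometric isomorphism, and it is almost certainly what is intended in \cite{LeCH} as well: once $H_{n}^{1/2}(\Gamma)$ is known to be a Hilbert space and $\Delta_{\Gamma}$ is recognized (via (\ref{seminorm})) as minus the Riesz map, parts (i) and (ii) are immediate. Your closing remarks about interpreting $\Delta_{\Gamma}u^{\ast}$ distributionally through the Dirichlet form, and about the quotient by constants, are exactly the points one should flag for rigor, and you handle them appropriately.
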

\h Now, for any $u\in BV(\Omega, \{-1,1\})$ with the interface $\Gamma =\partial\{x\in \Omega: u(x) = 1\}\cap \Omega$,
let $E(\Gamma)$ be the nonlocal area functional defined in (\ref{NOener}), which arises 
as the $\Gamma$-limit of the Ohta-Kawasaki functional $E_{\e}$. Denote by $v = \Delta^{-1}(u -\overline{u_{\Omega}}).$ \\
\h Then, with the choice of $\norm{\cdot}^{2}_{Y} = 4 \norm{\cdot}^{2}_{H^{-1/2}_{n}(\Gamma)}$, we have
\begin{propo}
Assume that $\Gamma$ is $C^{3}.$ Then the gradient of $E$ with respect to the structure $Y$ at 
$\Gamma$ is $ \nabla_{Y} E(\Gamma) = \frac{1}{2} \Delta_{\Gamma}(\sigma\kappa -\lambda v)\stackrel{\rightarrow}{n}$, where $\kappa$ is the 
mean curvature and $\stackrel{\rightarrow}{n}$ the unit outernormal vector to $\Gamma$. So if $\Gamma (t)$ is $C^{3}$ in 
space-time then the gradient flow of $E$ with respect to the structure $Y(t)$ at $\Gamma(t)$ is the nonlocal Mullins-Sekerka law (\ref{NO}).
\end{propo}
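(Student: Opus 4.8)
The plan is to compute the first variation of the nonlocal area functional $E$ in \eqref{NOener} under a normal perturbation of $\Gamma$, and then to re-express that first variation through the inner product of the structure $Y$ (for which $\norm{\cdot}_Y^2=4\norm{\cdot}_{H^{-1/2}_n(\Gamma)}^2$), thereby reading off $\nabla_Y E$. First I would fix a smooth one-parameter normal variation $\{\Gamma_s\}_{|s|<s_0}$ of $\Gamma=\Gamma_0$ with normal speed $V$ (a smooth scalar on $\Gamma$, so that the variation vector field is $V\stackrel{\rightarrow}{n}$), let $u_s\in BV(\Omega,\{-1,1\})$ be the associated two-phase function, and let $v_s$ solve $-\Delta v_s=u_s-\overline{(u_s)}_{\Omega}$ with vanishing Neumann data and zero mean. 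The only structural facts I need about the perturbation are that $\int_\Omega\abs{\nabla u_s}=2\,\mathcal H^{N-1}(\Gamma_s)$ and that, since $u$ jumps by $2$ across the interface, the distributional derivative $\dot u:=\frac{d}{ds}\big|_{s=0}u_s$ equals $-2V\,\delta_\Gamma$ (the sign coming from the convention $u=\pm1$ in $\Omega^{\pm}$, with $u=-1$ on the enclosed side $\Omega^-$).

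Next I would differentiate the two terms of $E$ separately. The short-range term equals $2\sigma\,\mathcal H^{N-1}(\Gamma_s)$, whose derivative is the classical first variation of area $2\sigma\int_\Gamma\kappa\,V\,d\mathcal H^{N-1}$, with $\kappa$ the mean curvature in the stated sign convention. For the long-range term $\frac{\lambda}{2}\int_\Omega\abs{\nabla v_s}^2$ I would exploit the self-adjointness of $(-\Delta)^{-1}$ under the Neumann/mean-zero constraints: differentiating and integrating by parts gives $\frac{d}{ds}\big|_{s=0}\frac{\lambda}{2}\int_\Omega\abs{\nabla v_s}^2=\lambda\int_\Omega\nabla v\cdot\nabla\dot v=\lambda\int_\Omega v\,\dot u$, and inserting $\dot u=-2V\delta_\Gamma$ turns this into the boundary integral $-2\lambda\int_\Gamma v\,V\,d\mathcal H^{N-1}$. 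Collecting both contributions, the differential of $E$ is $dE(\Gamma)\,V = 2\int_\Gamma(\sigma\kappa-\lambda v)\,V\,d\mathcal H^{N-1}$; the decisive point is that the trace appearing here is exactly the chemical potential $w|_\Gamma=\sigma\kappa-\lambda v$ prescribed in \eqref{NO}.

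Then I would identify the gradient. Setting $g:=\sigma\kappa-\lambda v|_\Gamma\in H_n^{1/2}(\Gamma)$, I seek $\nabla_Y E\in H_n^{-1/2}(\Gamma)$, regarded as the normal component along $\stackrel{\rightarrow}{n}$, satisfying $dE(\Gamma)\,V=\langle\nabla_Y E,V\rangle_Y=4\langle\nabla_Y E,V\rangle_{H_n^{-1/2}(\Gamma)}$ for all admissible $V$. Using Lemma \ref{downhalf}(ii) to write $\langle\Delta_\Gamma g,V\rangle_{H_n^{-1/2}(\Gamma)}=\langle g,\Delta_\Gamma^{-1}V\rangle_{H_n^{1/2}(\Gamma)}$, together with the identity $\langle a,b\rangle_{H_n^{1/2}(\Gamma)}=-\int_\Gamma(\Delta_\Gamma a)\,b\,d\mathcal H^{N-1}$ from \eqref{seminorm} and the self-adjointness of $\Delta_\Gamma$, one verifies that $V\mapsto 4\langle\tfrac12\Delta_\Gamma g,V\rangle_{H_n^{-1/2}(\Gamma)}$ is represented by $V\mapsto 2\int_\Gamma g\,V\,d\mathcal H^{N-1}$, which is precisely $dE(\Gamma)$; hence $\nabla_Y E(\Gamma)=\tfrac12\Delta_\Gamma(\sigma\kappa-\lambda v)\stackrel{\rightarrow}{n}$. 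Finally, for $\Gamma(t)$ that is $C^3$ in space-time (so that $t\mapsto\Gamma(t)$ is a genuine curve in the limiting space and $\partial_t\Gamma$ makes sense), the gradient flow $\partial_t\Gamma=-\nabla_Y E(\Gamma(t))$, combined with the defining relation $\Delta_\Gamma(w|_\Gamma)=-[\partial_n\tilde w]_\Gamma$ from \eqref{chartilde}--\eqref{twoLap} applied to $w=\sigma\kappa-\lambda v$ (which is harmonic off $\Gamma$ with vanishing Neumann data on $\partial\Omega$, hence its own harmonic extension $\tilde w$), reproduces $\partial_t\Gamma=\tfrac12[\partial_n w]_\Gamma$ and the remaining lines of \eqref{NO}.

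I expect the principal obstacle to be the bookkeeping of the orientation-dependent signs and the factors of $2$ --- those coming from the jump of $u$, from $\int_\Omega\abs{\nabla u}=2\mathcal H^{N-1}(\Gamma)$, and from the curvature/normal conventions --- so that they conspire to produce exactly the coefficient $\tfrac12$ and the combination $\sigma\kappa-\lambda v$ compatible with \eqref{NO}. A second, more analytic point requiring care is the rigorous justification of the first variation of the nonlocal term: one must check that $s\mapsto v_s$ is differentiable in $H^1(\Omega)$ and that the manipulation $\int_\Omega\nabla v\cdot\nabla\dot v=\int_\Omega v\,\dot u$ remains legitimate even though $\dot u$ is only a measure supported on $\Gamma$. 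This is precisely where the $C^3$ regularity of $\Gamma$ enters: it guarantees $v|_\Gamma\in H_n^{1/2}(\Gamma)$ and $\sigma\kappa-\lambda v\in H_n^{1/2}(\Gamma)$, so that $\Delta_\Gamma(\sigma\kappa-\lambda v)$ is a well-defined element of $H_n^{-1/2}(\Gamma)$ and the gradient indeed lies in the ambient space of the $Y$-structure.
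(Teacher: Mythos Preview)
Your approach is essentially the same as the paper's: compute the first variation of $E$ as an $L^{2}(\Gamma)$ pairing with $\sigma\kappa-\lambda v$, then pass from the $L^{2}$ representation to the $H_{n}^{-1/2}$/$Y$ representation via Lemma~\ref{downhalf} and \eqref{seminorm}, and finally read off the nonlocal Mullins--Sekerka law from the definition of $\Delta_{\Gamma}$. The only notable difference is that the paper restricts from the outset to \emph{volume-preserving} deformations (so that $\int_{\Gamma}V\,d\mathcal H^{N-1}=0$ and $\frac{d}{ds}\overline{u_{s}}_{\Omega}=0$), which cleanly justifies the step $\int_{\Omega}\nabla v\cdot\nabla\dot v=\int_{\Omega} v\,\dot u$ and is consistent with testing against $V\in H_{n}^{-1/2}(\Gamma)$; you should impose this explicitly, and---as you anticipated---track the signs carefully, since with the paper's conventions one has $\langle \Delta_{\Gamma}g,V\rangle_{H_{n}^{-1/2}(\Gamma)}=-\int_{\Gamma}gV$, not $+\int_{\Gamma}gV$.
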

\begin{proof} 
Because $\Gamma$ is $C^{3},$ $\kappa$ is $C^{1}$ on $\Gamma$ and thus $\kappa\in H^{1/2}(\Gamma).$ Consider 
a smooth volume preserving deformation $\Gamma(t)$ of $\Gamma$ and let $V = (\partial_{t}\Gamma)\stackrel{\rightarrow}{n}$ be its normal 
velocity vector at $t=0$. The volume preserving condition implies that
\begin{equation}
 \int_{\Gamma}\partial_{t}\Gamma d\mathcal{H}^{N-1} =0 
\label{volpreserv}
\end{equation}
and the first variation formula gives
\begin{equation}\left.\frac{d}{dt}\right\rvert_{t=0} E(\Gamma(t)) = 
- 2<{\bf K},V>_{L^{2}(\Gamma)}\label{curvature}\end{equation} 
where ${\bf K} = (\sigma\kappa-\lambda v) \stackrel{\rightarrow}{n}$. This formula can be found in \cite{CS}; see formula (2.47) in 
the proof of Theorem 2.3 and Remark 2.8. For completeness, we 
indicate a simple derivation using only (\ref{volpreserv}). {\it This derivation will 
be used later in the proof of the construction of the deformation in Proposition \ref{deform}}. 
Let $\Omega^{-}(t)$ be the region enclosed by $\Gamma(t)$ and $\Omega^{+}(t) = \Omega\backslash \overline{\Omega^{-}(t)}$. 
Set $u(x,t) = 2\chi_{\Omega^{+}(t)}(x) -1$. Then $\left.\frac{d}{dt}\right\rvert_{t=0} u(x,t) =2\delta_{\Gamma}(x)\partial_{t}\Gamma(x)$.  Recall that 
\begin{equation}
E(\Gamma(t)) = \sigma\int_{\Omega} \abs{\nabla u(t)} + 
\frac{\lambda}{2}\int_{\Omega}\abs{\nabla v(t)}^2 dx
\label{NOenerNO}
\end{equation}
where $v(t) =\Delta^{-1}(u(t)-\overline{u(t)}_{\Omega}) .$ It is well-known that
\begin{equation}
\left.\frac{d}{dt}\right\rvert_{t=0} \sigma\int_{\Omega} \abs{\nabla u(t)} = 
\left.\frac{d}{dt}\right\rvert_{t=0} 2\sigma \mathcal{H}^{N-1}(\Gamma(t)) = -2\sigma<\kappa,\partial_{t}\Gamma>_{L^{2}(\Gamma)}.
\label{firstterm}
\end{equation}
For the variation of the second term on the left hand side of (\ref{NOenerNO}), we note that
\begin{equation*}
v(x,t) = \int_{\Omega}G(x,y)(u(y,t)-\overline{u_{\Omega}(t)}) dy + C(t)
\end{equation*}
for some constant $C(t)$, where $G$ is the Green's function of the operator $-\Delta$ on $\Omega$ with Neumann boundary condition. Integrating by parts gives
\begin{multline}
\frac{1}{2}\int_{\Omega}\abs{\nabla v(t)}^2 dx = \frac{1}{2}\int_{\Omega}-\Delta v(t) v(t) dx\\=\frac{1}{2}
\int_{\Omega}\int_{\Omega} G(x,y)(u(x,t)-\overline{u_{\Omega}(t)})(u(y,t)-\overline{u_{\Omega}(t)})dy dx.
\label{gradveq}
\end{multline}
By (\ref{volpreserv}), 
\begin{equation*}
\left.\frac{d}{dt}\right\rvert_{t=0} \overline{u(t)_{\Omega}} = \frac{1}{\abs{\Omega}}\int_{\Omega}2\delta_{\Gamma}\partial_{t}\Gamma dx 
=\frac{2}{\abs{\Omega}}\int_{\Gamma}\partial_{t}\Gamma d\mathcal{H}^{N-1} =0.
\end{equation*}
Hence, differentiating (\ref{gradveq}), we obtain
\begin{multline}
\left.\frac{d}{dt}\right\rvert_{t=0}\frac{1}{2}\int_{\Omega}\abs{\nabla v(t)}^2 dx=
\int_{\Omega}\int_{\Omega} G(x,y)(u(y,0)-\overline{u_{\Omega}(0)})\left(\left.\frac{d}{dt}\right\rvert_{t=0} u(x,t)\right) dydx\\=
\int_{\Omega} (v(x,0) - C(0))2\delta_{\Gamma}\partial_{t}\Gamma dx = 2<v,\partial_{t}\Gamma>_{L^{2}(\Gamma)}.
\label{secondterm}
\end{multline}
Combining (\ref{firstterm}) and (\ref{secondterm}), we get (\ref{curvature}).\\
Therefore, the gradient of $E$ with respect 
to the structure $L^{2}(\Gamma)$ at $\Gamma$ is 
\begin{equation}\nabla_{L^{2}(\Gamma)}E(\Gamma)= - 2{\bf K} = - 2(\sigma\kappa -\lambda v) \stackrel{\rightarrow}{n}.\label{Heq}\end{equation} 

Now, we calculate the $H_{n}^{-1/2}$- gradient $\nabla_{H_{n}^{-1/2}(\Gamma)} E(\Gamma) = D\stackrel{\rightarrow}{n}$ of $E(\Gamma)$ with 
respect to $H^{-1/2}_{n}(\Gamma)$. To do this, it suffices to express the quantity $\left.\frac{d}{dt}\right\rvert_{t=0}E(\Gamma(t))$ as 
an inner product in $H_{n}^{-1/2}(\Gamma)$: $\left.\frac{d}{dt}\right\rvert_{t=0}E(\Gamma(t))=<D,\partial_{t}\Gamma>_{H_{n}^{-1/2}(\Gamma)}.$
By Lemma \ref{downhalf}, and (\ref{seminorm}), we have \begin{eqnarray*}<D,\partial_{t}\Gamma>_{H_{n}^{-1/2}(\Gamma)}
 = <\Delta_{\Gamma}^{-1}D,\Delta_{\Gamma}^{-1} \partial_{t}\Gamma>_{H_{n}^{1/2}(\Gamma)}&=& 
-\int_{\Gamma}(\Delta_{\Gamma}^{-1}D)\cdot \Delta_{\Gamma}(\Delta_{\Gamma}^{-1}\partial_{t}\Gamma)d\mathcal{H}^{N-1}\\& =& 
-\int_{\Gamma}(\Delta_{\Gamma}^{-1}D)\cdot \partial_{t}\Gamma d\mathcal{H}^{N-1}.\end{eqnarray*} 
It follows from (\ref{Heq}) that $\Delta_{\Gamma}^{-1}D = 2(\sigma\kappa -\lambda v).$ In other words, 
the $H_{n}^{-1/2}$- gradient $\nabla_{H_{n}^{-1/2}(\Gamma)} E(\Gamma) $ of $E$ at $\Gamma$ is 
given by $\nabla_{H_{n}^{-1/2}(\Gamma)} E(\Gamma)= D\stackrel{\rightarrow}{n} = \Delta_{\Gamma}(2(\sigma\kappa -\lambda v))\stackrel{\rightarrow}{n}. $ 
Recalling $\norm{\cdot}^{2}_{Y} = 4 \norm{\cdot}^{2}_{H^{-1/2}_{n}(\Gamma)}$, we find 
that \begin{equation}\nabla_{Y} E(\Gamma) = \frac{1}{4} \nabla_{H_{n}^{-1/2}(\Gamma)} E(\Gamma) 
=\frac{1}{2} \Delta_{\Gamma}(\sigma\kappa -\lambda v)\stackrel{\rightarrow}{n} \label{yeq}\end{equation} and thus
the gradient flow of $E(\Gamma)$ with respect to 
the structure $Y$ at $\Gamma$ is $V = - \nabla_{Y} E(\Gamma) = - \frac{1}{2} \Delta_{\Gamma}(\sigma\kappa -\lambda v)\stackrel{\rightarrow}{n}.$ Recall 
the definition of $\Delta_{\Gamma}$ to find 
that $\partial_{t}\Gamma  = \frac{1}{2}\left[\frac{\partial\widetilde{(\sigma\kappa-\lambda v)}}{\partial n}\right]_{\Gamma}$ and this is
 equivalent to the nonlocal Mullins-Sekerka law (\ref{NO}). 
\end{proof}
\section{Gamma-convergence of gradient flows and key inequalities}
\label{flowsec}
In this section we briefly recall the $\Gamma$-convergence of gradient flows scheme in \cite{SS} and discuss how to apply this scheme to prove
the convergence of (\ref{OKeq}) to (\ref{NO}).
\subsection{General framework} 
\label{generalfr} First, we recall from \cite{SS} the following general strategy. \\
\h If $E_{\varepsilon}$ $\Gamma$-converges to $E$, then the key conditions for which the gradient 
flow of $E_{\varepsilon}$ with respect to the structure $X_{\varepsilon}$ $\Gamma$- converges to the gradient flow of $E$ with 
respect to the structure $Y$ are the following inequalities for general functions $u^{\varepsilon}$, not necessarily solving 
$\partial_{t} u^{\e} = -\nabla_{X_{\e}} E_{\e}(u^{\e}).$\\
(C1) ({\it Lower bound on the velocity}) For a subsequence such that $u^{\varepsilon}(t) \stackrel{S}{\longrightarrow} u(t)$, we 
have $u\in H^{1}((0,T), Y)$ and for every $s\in[0,T),$ $\liminf_{\varepsilon\rightarrow 0}\int_{0}^{s}
\norm{\partial_{t}u^{\varepsilon}(t)}_{X_{\varepsilon}}^{2}dt\geq \int_{0}^{s}\norm{\partial_{t}u(t)}^2_{Y}dt.$\\
(C2) ({\it Lower bound on the slope})~ If $u^{\varepsilon}\stackrel{S }{\longrightarrow} u$ then  
$\liminf_{\varepsilon\rightarrow 0}\norm{\nabla_{X_{\varepsilon}}E_{\varepsilon}(u^{\varepsilon})}_{X_{\varepsilon}}^{2}
\geq\norm{\nabla_{Y}E(u)}^2_{Y}.$\\
In the above conditions, $(S)$ is a sense of convergence to be specified in each problem. 
\subsection{The case of the Ohta-Kawasaki functional} Let us now particularize the above framework to (\ref{OKeq}) and (\ref{NO}). In our case, the sense $(S)$ is understood as
$L^{2}(\Omega)$ convergence and the functionals $E_{\e}$ and $E$ are defined by (\ref{OKfn})
and (\ref{NOener}), respectively. The space $X_{\e}$ and $Y$ are respectively $X_{\varepsilon} = H^{-1}_{n}(\Omega)$ and 
\begin{equation}\norm{\cdot}_{Y}^{2} = 4\norm{\cdot}^{2}_{H^{-1/2}_{n}(\Gamma)}.\label{Yspace}\end{equation}
By the results of Section \ref{grsec}, we are in the framework of the general scheme in \cite{SS}. \\
\h The first criterion $(C1)$ in the scheme now becomes
\begin{propo}
Let $u^{\varepsilon}$ be defined over $\Omega\times [0, T]$ such that 
$\int_{\Omega} \abs{u^{\varepsilon}(t)}^2 dx\leq M<\infty$ for all $t\in [0, T]$ and all $\varepsilon >0.$ Assume that, 
after extraction, $u^{\varepsilon}(t)\rightarrow u(t)$ in $L^{2}(\Omega)$ for all $t\in [0, T]$ where 
$u(t)\in BV(\Omega, \{-1, 1\})$ with interface $\Gamma(t) = \partial\{x\in\Omega: u(x, t)=1\}\cap\Omega.$ Then, for all 
$t\in (0, T),$ we have \begin{equation}\liminf_{\varepsilon\rightarrow 0}\int_{0}^{t} 
\norm{\partial_{t}u^{\varepsilon}(s)}_{H_{n}^{-1}(\Omega)}^{2} ds\geq \int_{0}^{t}\norm{\partial_{t}u(s)}_{H_{n}^{-1}(\Omega)}^{2} = 
4 \int_{0}^{t}\norm{\delta_{\Gamma(s)}\partial_{t}\Gamma(s)}_{H_{n}^{-1}(\Omega)}^{2} ds.\label{boundvelo}\end{equation}
\label{velolemma}\end{propo}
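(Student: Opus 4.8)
The plan is to establish the liminf inequality \eqref{boundvelo} by identifying a lower bound for the velocity $\partial_t u^\varepsilon$ in terms of a more tractable functional, and then passing to the limit. First I would recall from Section \ref{grOK} that the norm $\norm{\cdot}_{H^{-1}_n(\Omega)}$ is defined via the inverse Neumann Laplacian, so that for each fixed $\varepsilon$ the quantity $\norm{\partial_t u^\varepsilon(s)}^2_{H^{-1}_n(\Omega)}$ equals the squared $L^2$-norm of the gradient of $\Delta_n^{-1}\partial_t u^\varepsilon(s)$. The key structural fact is that $\partial_t u^\varepsilon$ acts as a linear functional on $H^1(\Omega)$, and its $H^{-1}_n$-norm admits the dual characterization
\begin{equation*}
\norm{\partial_t u^\varepsilon(s)}_{H^{-1}_n(\Omega)} = \sup_{\varphi\in H^1(\Omega),\ \norm{\nabla\varphi}_{L^2}\leq 1}\ \int_\Omega \partial_t u^\varepsilon(s)\,\varphi\, dx = \sup_\varphi\ \frac{d}{ds}\int_\Omega u^\varepsilon(s)\,\varphi\, dx.
\end{equation*}
The idea is that along any fixed test direction $\varphi$, the time integral of $\frac{d}{ds}\int u^\varepsilon\varphi$ is controlled by the change in $\int u^\varepsilon\varphi$, which converges because $u^\varepsilon(t)\to u(t)$ in $L^2(\Omega)$ for each $t$.

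Next I would exploit lower semicontinuity. The functional $F\mapsto \int_0^t \norm{F(s)}^2_{H^{-1}_n(\Omega)}\,ds$ is convex and lower semicontinuous with respect to weak convergence of the time-derivatives in the appropriate dual topology. Concretely, the uniform bound $\int_\Omega \abs{u^\varepsilon(t)}^2\,dx\leq M$ together with the energy-type control ensures that, along a subsequence, $\partial_t u^\varepsilon$ converges weakly (as a measure-valued or $L^2((0,t),H^{-1}_n)$-valued object) to a limit that must coincide with $\partial_t u$ in the sense of distributions, since $u^\varepsilon(s)\to u(s)$ pointwise in $s$. Convexity of the squared norm then yields
\begin{equation*}
\liminf_{\varepsilon\to 0}\int_0^t \norm{\partial_t u^\varepsilon(s)}^2_{H^{-1}_n(\Omega)}\,ds \geq \int_0^t \norm{\partial_t u(s)}^2_{H^{-1}_n(\Omega)}\,ds,
\end{equation*}
which is precisely the first inequality in \eqref{boundvelo}, and simultaneously establishes that $u\in H^1((0,T),Y)$ with the limiting velocity well-defined.

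Finally I would compute the limiting norm explicitly. Since $u(s)=2\chi_{\Omega^-(s)}-1\in BV(\Omega,\{-1,1\})$ evolves by a normal velocity $\partial_t\Gamma(s)$ along the interface, the distributional time-derivative is $\partial_t u(s) = 2\,\delta_{\Gamma(s)}\,\partial_t\Gamma(s)$, as already recorded in the proof of the Proposition in Section \ref{grNO}. Substituting this into the $H^{-1}_n(\Omega)$ norm gives the factor of $4$ in the final equality $\int_0^t\norm{\partial_t u(s)}^2_{H^{-1}_n(\Omega)}\,ds = 4\int_0^t\norm{\delta_{\Gamma(s)}\partial_t\Gamma(s)}^2_{H^{-1}_n(\Omega)}\,ds$.

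I expect the main obstacle to be the identification of the weak limit of $\partial_t u^\varepsilon$ with the sharp-interface velocity $2\delta_{\Gamma(s)}\partial_t\Gamma(s)$, and more subtly, justifying that $u$ genuinely lies in $H^1((0,T),Y)$ rather than merely in a weaker space. The pointwise-in-time $L^2$ convergence $u^\varepsilon(s)\to u(s)$ controls the limit only against fixed test functions and for fixed times, so the delicate point is to upgrade this to control of the full time-integrated velocity in the $Y$-norm; here one must verify that the finiteness of the liminf forces the limit velocity to be absolutely continuous in time with values in $Y$, and that no velocity mass is lost to regions of $\operatorname{supp}\mu(s)$ away from $\Gamma(s)$. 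This requires carefully combining the lower semicontinuity argument with the explicit structure of $\partial_t u = 2\delta_\Gamma\partial_t\Gamma$ and the equivalence between the $Y$-norm and $4\norm{\cdot}^2_{H^{-1/2}_n(\Gamma)}$ established in \eqref{Yspace}.
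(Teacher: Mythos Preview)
Your approach---weak compactness of $\partial_t u^\varepsilon$ in $L^2((0,t),H^{-1}_n(\Omega))$ from finiteness of the liminf, identification of the weak limit as $\partial_t u$ via the pointwise-in-time $L^2$ convergence, and weak lower semicontinuity of the squared Hilbert norm---is correct and is precisely the argument the paper defers to (Proposition~1.1 in \cite{LeCH}). Your closing concerns are somewhat overblown: the proposition is stated entirely in the $H^{-1}_n(\Omega)$ norm, so the $Y$-norm equivalence \eqref{Yspace} plays no role here, and since $\partial_t u$ is the distributional time-derivative of the limit $u\in BV(\Omega,\{-1,1\})$ (not of the diffuse measures $\mu$), it is automatically concentrated on the jump set $\Gamma$ and the final equality is nothing more than $\partial_t u = 2\delta_\Gamma\partial_t\Gamma$ squared.
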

The proof of this Proposition is identical to that of Proposition 1.1 in \cite{LeCH}.\\
\h The second criterion $(C2)$ is equivalent to the following inequality \`{a} la De Giorgi: if $u^{\e}$ converges 
strongly in $L^{2}(\Omega)$ to $u\in BV(\Omega, \{-1,1\})$ with 
interface $\Gamma =\partial\{x\in\Omega: u(x) =1\}\cap\Omega$ then 
\begin{equation}
 \liminf_{\varepsilon\rightarrow 0}\int_{\Omega}
\abs{\nabla w^{\varepsilon}}^2 dx \geq  \norm{\sigma\kappa -\lambda v}^{2}_{H_{n}^{1/2}(\Gamma)}.
\label{motiineq}
\end{equation}
Here $w^{\e} = \varepsilon \Delta u^{\varepsilon}-\varepsilon^{-1}f(u^{\varepsilon}) -\lambda v^{\e}$ and $v^{\e} = \Delta^{-1}
(u^{\e}-\overline{u^{\e}_{\Omega}})$; $\kappa$ is the mean curvature of 
$\Gamma$  and $v =  \Delta^{-1}
(u-\overline{u_{\Omega}}).$
Indeed, from (\ref{xlemma}) and 
(\ref{Xvareq}), one can calculate
\begin{equation*}
 \norm{\nabla_{H_{n}^{-1}(\Omega)}E_{\varepsilon}(u^{\varepsilon})}_{H_{n}^{-1}(\Omega)}^{2} = 
\norm{\Delta w^{\e}}_{H_{n}^{-1}(\Omega)}^{2} =
\norm{\nabla w^{\varepsilon}}^{2}_{L^{2}(\Omega)}.
\end{equation*}
On the other hand, from (\ref{yeq}) and Lemma \ref{downhalf} $(ii)$, one deduces that
\begin{equation*}
 \norm{\nabla_{Y}E(\Gamma)}^2_{Y} = \norm{\frac{1}{2}\Delta_{\Gamma}(\sigma\kappa- \lambda v)}^2_{Y} = 
\norm{\Delta_{\Gamma}(\sigma\kappa-\lambda v)}^2_{H_{\color{red} n}^{-1/2}(\Gamma)} = \norm{\sigma\kappa-\lambda v}^2_{H_{\color{red} n}^{1/2}(\Gamma)}.
\end{equation*}
We will prove (\ref{motiineq}) in Lemma \ref{mainlemma} in Section \ref{mainlemsec}.
\subsection{Time-dependent limiting space} Let us emphasize that in \cite{SS}, the limiting space $Y$ is fixed. Assuming the validity of 
$(C1)$ and $(C2)$, the proof of the convergence of the gradient 
flow of $E_{\varepsilon}$ with respect to the structure $X_{\varepsilon}$ to the gradient flow of $E$ with 
respect to the structure $Y$ is quite short. In our case, we will apply $(C2)$ ( and (\ref{motiineq})) to $u^{\e}(t)$ where $u^{\e}$ is the solution 
of (\ref{OKeq}). Thus, $Y$ is time-dependent and it is not entirely clear how to carry out the scheme in 
\cite{SS}. Let us say right away that we just formally follow \cite{SS} and the time-dependent nature of $Y$ in our case is very special. The
most crucial point is that the term $\norm{\sigma\kappa -\lambda v}^{2}_{H_{n}^{1/2}(\Gamma)}$ on the left hand side of (\ref{motiineq})
can be expressed by a quantity defined globally on the whole domain $\Omega$. Precisely, we have
\begin{equation}
 \norm{\sigma\kappa -\lambda v}^{2}_{H_{n}^{1/2}(\Gamma)} = \inf_{\omega\in H^{1}({\Omega}),~ \omega=\sigma\kappa-\lambda v~on~\Gamma}\int_{\Omega} 
\abs{\nabla \omega}^2 dx.
\label{globalq}
\end{equation}
For each time slice $t$, (\ref{motiineq}) is a static statement. When considering the dynamics of (\ref{OKeq}), we use the function $\omega (x,t)$ such that
for each time slice $t$, $\omega(\cdot, t)\in H^{1}(\Omega)$ and 
realizes the infimum in (\ref{globalq}) for the quantity $\norm{\sigma\kappa -\lambda v}^{2}_{H_{n}^{1/2}(\Gamma(t))}$. The smoothness assumption
$(A2)$ on the time-track interface $\displaystyle \cup_{0\leq t\leq T}(\Gamma(t)\times \{t\})$ allows us to connect the values of $\omega$ on different
time slices. See, e.g, (\ref{Cauchy}) and (\ref{smoothness}) in the proof of Theorem \ref{dynamics}. Thus
the suspicion of the applicability of the scheme in \cite{SS} to our problem with the time-dependence nature of $Y$ can be more or less lifted in our proofs.
\section{An Inequality \`{a} la De Giorgi}
\label{mainlemsec}
\h In this section, we prove a main technical result, Lemma \ref{mainlemma}, that turns out to be crucial in the proof of 
Theorem \ref{dynamics}.\\
{\it Notes on notations.} In this section, we consider the smooth solution $(u^{\varepsilon}, v^{\varepsilon}, w^{\e})$ 
of (\ref{OKeq}) on $\Omega\times [0, \infty)$ with well-prepared initial data $u^{\e}_{0}$. By Proposition \ref{selection}, 
we can actually choose a subsequence of $\e$ such that $u^{\e}(\cdot,t)$ converges to 
$u^{0}(\cdot,t)\in BV(\Omega, \{-1,1\})$ in $L^{2}(\Omega)$ for all time slice 
$t$. For ease of notation, we drop the superscript $0$ in $u^{0}$. 
Denote $\Gamma(t) = \partial\{x\in\Omega: u^{0}(x,t)=1\}\cap\Omega$ and $\kappa(t)$ its mean curvature. Note that, due to the mass-preserving
nature of (\ref{OKeq}), we have for all $t\in [0,\infty)$
\begin{equation}
 \overline{u^{\e}_{\Omega}}(t) = \overline{u^{\e}_{0\Omega}} = m_{\e} \in (-m, m) ~~(0<m<1).
\label{massp}
\end{equation}
As always, we denote $\Delta^{-1}(u^{0}(s) -\overline{u^{0}_{\Omega}}(s))$ by $v(s)$. It is easy to see that $v^{\e}(t)\rightarrow v(t)$ in $H^{1}(\Omega)$ for each $t$.\\
\h Our main technical lemma reads
\begin{lemma}(Main Lemma)
Assume the time-track interface $\displaystyle \cup_{0\leq t\leq T}(\Gamma(t)\times \{t\})$ is $C^{3,\alpha}$. Then, there 
exists a positive constant $\delta(0)>0$ depending only on the initial data $u(0)$ such that for
$L^{1}$ a.e time slice $t\in [0, \delta(0)]$ we have \begin{eqnarray}\liminf_{\varepsilon\rightarrow 0}\int_{\Omega}
\abs{\nabla w^{\varepsilon}(t)}^2 dx\geq  \norm{\sigma\kappa(t) -\lambda v(t)}^{2}_{H_{n}^{1/2}(\Gamma(t))}.\label{dissipation}\end{eqnarray}
\label{mainlemma}
\end{lemma}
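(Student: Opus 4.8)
The plan is to establish the liminf inequality \eqref{dissipation} by combining a diffuse-interface convergence result for the chemical potential with a constancy-of-multiplicity argument that forces single multiplicity for short time. The key point is that \eqref{dissipation} is precisely the static statement \eqref{motiineq} with $w^{\e} = \varepsilon\Delta u^{\e} - \varepsilon^{-1}f(u^{\e}) - \lambda v^{\e}$, so the difficulty is entirely in passing to the limit in $\int_{\Omega}\abs{\nabla w^{\e}(t)}^2\,dx$ and identifying the limit with $\norm{\sigma\kappa(t) - \lambda v(t)}^2_{H_n^{1/2}(\Gamma(t))}$. I would first observe that, since the nonlocal perturbation $v^{\e}(t)\to v(t)$ in $H^1(\Omega)$ (as noted in the \textit{Notes on notations}), the term $\lambda v^{\e}$ is harmless: it is a strongly convergent, smooth piece. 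Thus the whole issue reduces to controlling the Allen--Cahn chemical potential $\mu^{\e} := \varepsilon\Delta u^{\e} - \varepsilon^{-1}f(u^{\e})$ and showing that, in the limit, it concentrates on $\Gamma(t)$ with the right density $\sigma\kappa(t)$.

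Next I would invoke Tonegawa's convergence theorem (Theorem \ref{Toneconvthm}) for diffuse interfaces whose chemical potential lies in $W^{1,p}(\Omega)$ with $p > N/2$; in our case $p = 2$ and the restriction $N\leq 3$ is exactly what makes $2 > N/2$. For $L^1$-a.e.\ time slice $t$ the quantity $\liminf_{\e\to 0}\int_{\Omega}\abs{\nabla w^{\e}(t)}^2\,dx$ is finite (this follows from Fatou's lemma applied to the integrated energy-dissipation identity, since $\int_0^T\int_{\Omega}\abs{\nabla w^{\e}}^2\,dx\,dt = E_{\e}(u^{\e}(0)) - E_{\e}(u^{\e}(T))\leq M$), so for a.e.\ $t$ we may extract a subsequence along which $w^{\e}(t)$ is bounded in $H^1(\Omega)$ and Tonegawa's theorem applies. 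This yields convergence of the diffuse energy measures to an integer-rectifiable varifold whose generalized mean curvature is the $L^2$-weak limit of $\mu^{\e}(t)$, and R\"{o}ger's locality theorem (Theorem \ref{Rogerlocality}) identifies this limiting mean curvature vector with $\sigma\kappa(t)\stackrel{\rightarrow}{n}$ on $\Gamma(t)$. Running the lower-semicontinuity estimate through the quotient structure of $H_n^{1/2}(\Gamma(t))$ via the variational characterization \eqref{globalq} then produces the desired bound, \emph{provided} the limiting varifold has multiplicity one.

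The main obstacle, as the introduction flags, is precisely this multiplicity issue: Tonegawa's theorem only guarantees an \emph{odd integer} multiplicity $m(t)$, and \eqref{dissipation} is false when $m(t) > 1$ because the short-range contribution then carries mass $m(t)\sigma\int_{\Omega}\abs{\nabla u^0(t)}$, whereas the nonlocal term always has multiplicity one. To overcome this I would use the well-preparedness of the \emph{initial} data together with the $C^{3,\alpha}$ time-regularity of the time-track interface to control $m(t)$ for short time. Concretely, at $t=0$ well-preparedness forces $m(0)=1$; Sch\"{a}tzle's constancy theorem (Theorem \ref{Schatzle}) then asserts that the multiplicity of the smooth limiting interface $\Gamma(t)$ is constant in $t$, and the continuity of the interface in time, quantified by the $C^{\alpha}$-in-time smoothness assumption \eqref{smoothness}, lets me propagate $m(t)=1$ on a small time interval $[0,\delta(0)]$, with $\delta(0)$ depending only on $u(0)$ through the modulus of continuity of the energy. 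This is the crux: controlling the possible folding of $u^{\e}(t)$ around $\Gamma(t)$. Once $m(t)=1$ is secured for a.e.\ $t\in[0,\delta(0)]$, the varifold is multiplicity-one, the generalized mean curvature identification is clean, and lower semicontinuity of the Dirichlet energy under the trace constraint in \eqref{globalq} delivers \eqref{dissipation}. I expect the delicate bookkeeping to lie in making the passage from single multiplicity at $t=0$ to a.e.\ single multiplicity on a genuine time interval rigorous, since this is where the static varifold theory must be coupled to the dynamics.
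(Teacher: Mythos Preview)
Your overall strategy is correct and matches the paper's approach: Fatou's lemma for a.e.\ finiteness of $\int_{\Omega}|\nabla w^{\e}(t)|^2$, Tonegawa's theorem for the varifold limit, R\"{o}ger's locality for the mean curvature identification, and lower semicontinuity via the variational characterization \eqref{globalq}. The gap is in how you rule out higher multiplicity.

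You misread Sch\"{a}tzle's constancy theorem. Theorem~\ref{Schatzle} is a purely \emph{spatial} statement: for a fixed $t$, it says the density $\theta(t)(\cdot)$ is constant on the connected $C^1$-manifold $\Gamma(t)$, yielding a single odd integer $\theta_0(t)$. It says nothing about constancy of $t\mapsto\theta_0(t)$, and there is no a~priori continuity of multiplicity in time that would let you ``propagate $m(t)=1$'' from $t=0$. (Indeed at $t=0$ you do not even have the chemical-potential bound needed to invoke Tonegawa.) The role of Sch\"{a}tzle here is to upgrade the pointwise-integer density to a single constant, so that one obtains the sharp lower bound
\[
\liminf_{\e\to 0}E_{\e}(u^{\e}(t))\ \geq\ \theta_0(t)\,\sigma\!\int_{\Omega}|\nabla u(t)|\ +\ \tfrac{\lambda}{2}\!\int_{\Omega}|\nabla v(t)|^2.
\]

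The actual mechanism that forces $\theta_0(t)=1$ for small $t$ is an \emph{energy comparison}, and here the dynamics enters in an essential way through the gradient-flow structure, not merely through time-continuity of $\Gamma(t)$. Since $E_{\e}$ decreases along the flow and the initial data are well-prepared,
\[
\limsup_{\e\to 0}E_{\e}(u^{\e}(t))\ \leq\ \lim_{\e\to 0}E_{\e}(u^{\e}(0))\ =\ \sigma\!\int_{\Omega}|\nabla u(0)|+\tfrac{\lambda}{2}\!\int_{\Omega}|\nabla v(0)|^2.
\]
Combining this with the lower bound above and the H\"{o}lder continuity in time of both $\int_{\Omega}|\nabla u(t)|$ (from the $C^{\alpha}$ assumption on the time-track) and $\int_{\Omega}|\nabla v(t)|^2$ (from \eqref{volholder}--\eqref{vtime}), one finds that for $t$ in a small interval $[0,\delta(0)]$,
\[
\theta_0(t)\,\sigma\!\int_{\Omega}|\nabla u(t)|+\tfrac{\lambda}{2}\!\int_{\Omega}|\nabla v(t)|^2\ <\ 2\sigma\!\int_{\Omega}|\nabla u(t)|+\tfrac{\lambda}{2}\!\int_{\Omega}|\nabla v(t)|^2,
\]
which forces the odd integer $\theta_0(t)$ to equal $1$. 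Your phrase ``modulus of continuity of the energy'' gestures toward this, but without the energy-monotonicity step the argument does not close: time-continuity alone cannot exclude a jump to multiplicity $3$ for $t>0$. Once $\theta_0(t)=1$ is secured, your final paragraph (weak limit $k(t)=\sigma\kappa(t)$ on $\Gamma(t)$, hence $w(t)=\sigma\kappa(t)-\lambda v(t)$ on $\Gamma(t)$, then lower semicontinuity) is exactly the paper's conclusion.
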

\begin{remark}
This is a nonlocal variant of an $H^{1}$-version of De Giorgi's conjecture \cite{DeG}. For more information on De Giorgi's conjectures and 
inequalities, we refer the reader to \cite{LeCH}. As explained by the end of the introduction and in Remark \ref{DGrk}, a static 
statement similar to (\ref{dissipation}) may be false. However, when $\lambda =0$, we have a purely static result as in Conjecture {(\bf{CH})}
and Theorem \ref{conjCH}.
\end{remark}

The rest of this section is devoted to proving Lemma \ref{mainlemma}. The proof of this lemma relies on 
\begin{myindentpar}{0.6cm}
(i) Tonegawa's  convergence theorem for diffused
interfaces whose chemical potentials are uniformly bounded in a Sobolev space;\\
(ii) R\"{o}ger's locality theorem for the weak mean curvature vector of an integral varifold;
and \\
(iii) Sch\"{a}tzle's constancy theorem for the density of an integral varifold with weak mean curvature in $L^{1}$.
\end{myindentpar}
\h First, we recall
\begin{theorem} (Tonegawa's  Convergence Theorem, Theorem 1 in \cite{TonePhase})
Suppose $p>\frac{N}{2}$ and let $\left\{u^{\varepsilon}\right\}_{0<\varepsilon\leq 1}$ be a sequence of $W^{3, p}(\Omega)$ functions 
satisfying \\
(a) The energy bound
\begin{equation*}
 \int_{\Omega}\left(\frac{\varepsilon}{2}\abs{\nabla u^{\e}}^2 +\frac{1}{\varepsilon}W(u^{\e})\right)dx \leq M<\infty,\end{equation*}
(b) The following uniform bound on the chemical potentials $\varepsilon\Delta u^{\varepsilon}-
\varepsilon^{-1}f(u^{\varepsilon})$
\begin{equation*}
 \norm{\varepsilon\Delta u^{\varepsilon}-
\varepsilon^{-1}f(u^{\varepsilon})}_{W^{1,p}(\Omega)}\leq M.
\end{equation*}
Then, after extraction,\\
(i) \begin{equation*}
     u^{\e}\rightarrow u~\text{in}~ L^{2}(\Omega),~ u\in BV(\Omega, \{-1,1\}),
    \end{equation*}
(ii) \begin{equation*}
      \varepsilon\Delta u^{\varepsilon}-
\varepsilon^{-1}f(u^{\varepsilon})\rightharpoonup F,~\text{weakly in}~ W^{1,p}(\Omega),
     \end{equation*}
(iii) there exists a Radon measure $\mu$ on $\Omega$ such that, 
in the sense of Radon measures, 
\begin{equation*}\left(\frac{\varepsilon\abs{\nabla u^{\varepsilon}}^2}{2} +
\frac{W(u^{\varepsilon})}{\varepsilon}\right)dx  \rightharpoonup \mu.
\end{equation*}
(iv) Moreover, $(2\sigma)^{-1}\mu$ is $(N-1)$-integer-rectifiable varifold with $(N-1)$-dimensional density
\begin{equation*}
 \theta^{(n-1)} (\mu,\cdot) = \theta(\cdot)2\sigma
\end{equation*}
where $\theta(\cdot)$ is integer-valued. \\
(v)Furthermore, $\mu$ has weak mean curvature $\overrightarrow{H_{\mu}}
\in L^{\frac{2(N-1)}{N-2}}_{loc}(\mu)$ and 
\begin{equation*}
 \overrightarrow{H_{\mu}} = \frac{F}{\theta\sigma}\nu \in L^{\frac{2(N-1)}{N-2}}_{loc}(\mu).
\end{equation*}
which holds $\mu$-almost everywhere, where $\nu =\frac{\nabla u}{\abs{\nabla u}}$ on $\partial^{\ast}\{u=1\}$ and $\nu=0$ elsewhere.\\
(vi) For $\mathcal{H}^{N-1}$ a.e. $x\in \partial^{\ast}\{u=1\}$, $\theta(x)$ is an odd integer.
\label{Toneconvthm}
\end{theorem}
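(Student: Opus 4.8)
The plan is to follow the geometric-measure-theory strategy of Hutchinson--Tonegawa and Ilmanen, regarding the chemical potential $w^{\e}=\e\Delta u^{\e}-\e^{-1}f(u^{\e})$ as a forcing term whose $W^{1,p}$ bound with $p>\frac{N}{2}$ supplies exactly enough integrability on the limiting interface. First I would extract the easy compactness: from the energy bound (a), the Modica--Mortola--Sternberg argument gives precompactness of $\{u^{\e}\}$ in $L^{2}(\Og)$ with any limit satisfying $W(u)=0$ a.e., so $u\in\{-1,1\}$ a.e., and the Modica--Mortola lower-semicontinuity bound forces $u\in BV(\Og,\{-1,1\})$, proving (i). The energy measures $\mu^{\e}=(\frac{\e}{2}\abs{\nabla u^{\e}}^{2}+\frac{1}{\e}W(u^{\e}))\,dx$ have total mass $\le M$, so weak-$*$ compactness of Radon measures gives $\mu^{\e}\rightharpoonup\mu$, which is (iii); and reflexivity of $W^{1,p}$ together with bound (b) gives $w^{\e}\rightharpoonup F$ weakly in $W^{1,p}(\Og)$, with $w^{\e}\to F$ strongly in $L^{p}(\Og)$ by Rellich, which is (ii).

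The analytic core is to prove equipartition of energy in the limit, i.e. that the discrepancy measures $\xi^{\e}=(\frac{\e}{2}\abs{\nabla u^{\e}}^{2}-\frac{1}{\e}W(u^{\e}))\,dx$ satisfy $\xi^{\e}\rightharpoonup0$. I would establish this through a diffuse monotonicity formula for the density ratios of $\mu^{\e}$, in the spirit of Ilmanen's analysis of the Allen--Cahn equation, in which $w^{\e}$ enters only as a lower-order forcing; the bound (b) with $p>\frac{N}{2}$ is precisely what keeps the forcing contribution integrable so that the monotonicity quantity stays almost monotone, and the discrepancy is squeezed to zero. Equipartition is what upgrades the diffuse energy into a genuine $(N-1)$-dimensional geometric object with positive lower density $\mu$-a.e.

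With equipartition in hand, I would associate to $u^{\e}$ the varifold $V^{\e}$ of weight $\mu^{\e}$ and approximate tangent plane orthogonal to $\nabla u^{\e}$, and compute its first variation via the divergence identity relating $\nabla\cdot(\e\,\nabla u^{\e}\otimes\nabla u^{\e})$ to $w^{\e}\nabla u^{\e}$, obtaining $\delta V^{\e}(g)=-\int w^{\e}\,\nabla u^{\e}\cdot g\,dx$ plus discrepancy terms. Letting $\e\to0$, the previous step annihilates the discrepancy and the first step sends $w^{\e}\rightharpoonup F$, so the limit varifold $V=(2\sigma)^{-1}\mu$ has first variation represented by an $L^{1}(\mu)$ generalized mean curvature; Allard's rectifiability theorem then makes $V$ be $(N-1)$-rectifiable, and the Hutchinson--Tonegawa blow-up analysis, in which the optimal one-dimensional profile of $W$ carries energy $2\sigma$ per transition, shows the density $\theta=(2\sigma)^{-1}\theta^{(N-1)}(\mu,\cdot)$ is integer-valued, giving (iv). Matching the first-variation representation against the weak limit $F$ yields the identification $\overrightarrow{H_{\mu}}=\frac{F}{\theta\sigma}\nu$ with the integrability claimed in (v). Finally, at $\mathcal{H}^{N-1}$-a.e.\ point of $\partial^{\ast}\{u=1\}$ the limit $u$ jumps between the two wells, so every blow-up of $u^{\e}$ must contain an odd number of transition layers, each contributing one unit of density; hence $\theta$ is odd there, which is (vi).

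The step I expect to be the main obstacle is the vanishing of the discrepancy: establishing equipartition under only a $W^{1,p}$ bound on the chemical potential, weaker than the pointwise or H\"{o}lder control exploited in the pure Allen--Cahn and mean-curvature-flow settings, is the delicate analytic point, and it is exactly where $p>\frac{N}{2}$ (equivalently $N\le3$ when $p=2$) is forced upon us in order to close the monotonicity estimate.
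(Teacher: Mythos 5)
The paper does not prove this statement at all: it is quoted verbatim as Theorem 1 of Tonegawa's paper \cite{TonePhase} and used as a black box (in the proofs of Proposition \ref{mild}, Lemma \ref{mainlemma}, and Theorem \ref{conjCH}), so there is no internal proof to compare your attempt against. Judged against the actual source, your outline does follow the same strategy as Tonegawa's proof and its Hutchinson--Tonegawa antecedent: Modica--Mortola compactness for (i), weak $W^{1,p}$ and Radon-measure compactness for (ii)--(iii), vanishing of the discrepancy measure via an almost-monotonicity formula in which the chemical potential enters as a forcing term, then the first-variation identity for the associated varifold, Allard-type rectifiability, blow-up analysis for the integer density in (iv), identification of the generalized mean curvature in (v), and the parity argument for (vi); the role you assign to $p>\frac{N}{2}$ is also the right one, since by Sobolev embedding it places the chemical potential in $L^{q}$ with $q>N$, which is what closes the monotonicity estimate and meets the Allard-type integrability threshold for an $(N-1)$-dimensional varifold. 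The caveat is that your proposal is a plan rather than a proof: the step you yourself identify as the main obstacle --- equipartition of energy under only a Sobolev bound on the chemical potential --- is precisely the technical content of \cite{TonePhase}, and your sketch asserts it ("I would establish this through a diffuse monotonicity formula") without carrying out any of the estimates; the same is true, to a lesser degree, of the integrality and odd-multiplicity claims. So your text is an accurate road map of the literature proof, but it could not stand alone as a substitute for the citation.
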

\begin{theorem}(R\"{o}ger's locality theorem, Proposition 3.1 in \cite{Roger}) Let $E\subset\Omega$ be a set of finite perimeter, i.e, $\chi_{E}\in BV(\Omega)$. Assume 
that there are two (N-1)-integer-rectifiable varifolds $\mu_{1},\mu_{2}$ on $\Omega$ such that for $i=1,2$, the following hold:\\
(a) \begin{equation*}
     \partial^{\ast} E\subset \text{supp}~ \mu_{i}
    \end{equation*}
(b) $\mu_{i}$ has locally bounded first variation with weak mean curvature vector $ \overrightarrow{H_{\mu_{i}}}$,\\
(c) \begin{equation*}
      \overrightarrow{H_{\mu_{i}}}\in L^{s}_{\text{loc}} (\mu_{i}),~ s>\text{max}~\{N-1,2\}.
    \end{equation*}
Then
\begin{equation*}
  \overrightarrow{H_{\mu_{1}}}\mid_{\partial^{\ast} E} =  \overrightarrow{H_{\mu_{2}}}\mid_{\partial^{\ast} E}
\end{equation*}
\label{Rogerlocality}
\end{theorem}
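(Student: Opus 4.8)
The plan is to prove the asserted identity $\mathcal{H}^{N-1}$-almost everywhere on $\partial^{\ast}E$ by a blow-up analysis at good points, after first reducing the vectorial statement to a scalar one. Since $E$ has finite perimeter, $\partial^{\ast}E$ is $(N-1)$-rectifiable and $\mathcal{H}^{N-1}\lfloor\partial^{\ast}E$ is locally finite, while each $\overrightarrow{H_{\mu_{i}}}$ is defined $\mu_{i}$-a.e.; hence it suffices to prove $\overrightarrow{H_{\mu_{1}}}(x_{0})=\overrightarrow{H_{\mu_{2}}}(x_{0})$ for $\mathcal{H}^{N-1}$-a.e. $x_{0}\in\partial^{\ast}E$. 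First I would restrict to the full-measure set of \emph{good points} $x_{0}\in\partial^{\ast}E$ at which: $E$ has a measure-theoretic inner normal $\nu(x_{0})$ (so that $\chi_{(E-x_{0})/\rho}$ converges in $L^{1}_{loc}$ to the half-space $\{y\cdot\nu(x_{0})<0\}$ and $\partial^{\ast}E$ has approximate tangent plane $T_{0}:=\nu(x_{0})^{\perp}$); each $\mu_{i}$ has an approximate tangent plane and a positive integer density $\theta_{i}(x_{0})$; and $x_{0}$ is a Lebesgue point of $\overrightarrow{H_{\mu_{i}}}$ with respect to $\mu_{i}$. Because each $\mu_{i}$ is an integral varifold with $\overrightarrow{H_{\mu_{i}}}\in L^{1}_{loc}(\mu_{i})$, Brakke's orthogonality theorem gives $\overrightarrow{H_{\mu_{i}}}(x)\perp T_{x}\mu_{i}$ for $\mu_{i}$-a.e. $x$; once I show that $T_{x_{0}}\mu_{i}=T_{0}$, the identity collapses to the single scalar equation $h_{1}(x_{0})=h_{2}(x_{0})$, where $\overrightarrow{H_{\mu_{i}}}(x_{0})=h_{i}(x_{0})\,\nu(x_{0})$.

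Next I would establish that $\partial^{\ast}E$ is tangentially contained in both varifolds. The hypothesis $s>N-1$ is exactly what makes the monotonicity formula for varifolds with $L^{s}$ mean curvature available, yielding a uniform density lower bound $\theta^{\ast(N-1)}(\mu_{i},x)\ge c_{0}>0$ at every point of $\mathrm{supp}\,\mu_{i}$, in particular on $\partial^{\ast}E$ by hypothesis (a). The standard density-comparison lemma then forces $\mathcal{H}^{N-1}\lfloor\partial^{\ast}E\ll\mu_{i}$, whence $\mathcal{H}^{N-1}(\partial^{\ast}E\setminus M_{i})=0$, where $M_{i}$ denotes the $(N-1)$-rectifiable set carrying $\mu_{i}$. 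Thus $\partial^{\ast}E\subseteq M_{i}$ up to an $\mathcal{H}^{N-1}$-null set; since two rectifiable sets share the same approximate tangent plane $\mathcal{H}^{N-1}$-a.e. on their intersection, I conclude $T_{x_{0}}\mu_{i}=T_{x_{0}}\partial^{\ast}E=T_{0}$ at $\mathcal{H}^{N-1}$-a.e. good $x_{0}$, completing the reduction begun above.

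The heart of the matter, and the step I expect to be the main obstacle, is the locality of the scalar curvature $h_{i}(x_{0})$. Blowing up $\mu_{i}$ at $x_{0}$ and using the $L^{s}$-bound (the rescaled mean curvature acquires a factor $\rho$ and hence tends to $0$ in $L^{s}_{loc}$) shows the tangent varifold is the \emph{stationary} plane $\theta_{i}(x_{0})\,\mathcal{H}^{N-1}\lfloor T_{0}$; the leading-order blow-up therefore sees nothing of $h_{i}(x_{0})$, which is a first-variation quantity. To extract it one must pass to the next order: using the quadratic tilt-excess decay estimates valid for integral varifolds with $\overrightarrow{H}\in L^{s}$, $s>\max\{N-1,2\}$ (this is where the extra integrability beyond $s>N-1$ is consumed), one obtains at $\mathcal{H}^{N-1}$-a.e. $x_{0}$ a first-order graphical expansion of $M_{i}$ over $T_{0}$ in which the mean of the height function over $M_{i}\cap B_{\rho}(x_{0})$ develops a quadratic term whose coefficient is precisely $h_{i}(x_{0})$. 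The delicate point is that $M_{1}$ and $M_{2}$ are only known to coincide \emph{on} $\partial^{\ast}E$ and may branch apart elsewhere, and that $\theta_{i}$ may exceed one; I must therefore argue that the extracted coefficient depends only on the infinitesimal structure of the common set $\partial^{\ast}E$ at $x_{0}$, and neither on the multiplicity nor on the portions of $M_{i}$ peeling away from $\partial^{\ast}E$. This is exactly the content of the tilt-excess/locality machinery for the weak mean curvature, and it is what forces the strong integrability hypothesis $(c)$.

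Granting the locality of $h_{i}$, the proof concludes: at $\mathcal{H}^{N-1}$-a.e. good $x_{0}\in\partial^{\ast}E$ the common infinitesimal data of $M_{1}$ and $M_{2}$ along $\partial^{\ast}E$ produce the same quadratic coefficient, so $h_{1}(x_{0})=h_{2}(x_{0})$ and hence $\overrightarrow{H_{\mu_{1}}}(x_{0})=h_{1}(x_{0})\nu(x_{0})=h_{2}(x_{0})\nu(x_{0})=\overrightarrow{H_{\mu_{2}}}(x_{0})$. Since this holds $\mathcal{H}^{N-1}$-a.e. on $\partial^{\ast}E$, the restrictions agree, which is the assertion $\overrightarrow{H_{\mu_{1}}}\mid_{\partial^{\ast}E}=\overrightarrow{H_{\mu_{2}}}\mid_{\partial^{\ast}E}$.
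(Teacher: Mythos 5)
You should first be aware that the paper contains no proof of this statement: it is quoted verbatim, with attribution, as Proposition 3.1 of R\"{o}ger's paper \cite{Roger}, where it is in turn deduced from Sch\"{a}tzle's pointwise second-order analysis of integral varifolds whose weak mean curvature lies in $L^{s}$, $s>\max\{N-1,2\}$ (the J.~Differential Geom.~2001 paper on hypersurfaces with mean curvature given by an ambient Sobolev function, and the related quadratic tilt-excess decay work). So the only benchmark is that original argument. Measured against it, your preparatory reductions are correct and are indeed the standard ones: the De Giorgi structure theorem, the density lower bound on $\mathrm{supp}\,\mu_{i}$ from the monotonicity formula (this is where $s>N-1$ enters), the resulting absolute continuity $\mathcal{H}^{N-1}\lfloor\partial^{\ast}E\ll\mu_{i}$, agreement of approximate tangent planes $\mathcal{H}^{N-1}$-a.e.\ on the intersection of rectifiable sets, and Brakke's orthogonality theorem reducing the vector identity to the scalar one $h_{1}(x_{0})=h_{2}(x_{0})$.

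The genuine gap is at the step you yourself call the heart of the matter, and it makes the argument circular. Having noted that the blow-up of $\mu_{i}$ at $x_{0}$ is a stationary multiplicity-$\theta_{i}$ plane, so that $h_{i}(x_{0})$ is a next-order quantity, you claim that the quadratic coefficient in your mean-height expansion of $M_{i}$ over $B_{\rho}(x_{0})$ ``depends only on the infinitesimal structure of the common set $\partial^{\ast}E$'' and justify this by saying it ``is exactly the content of the tilt-excess/locality machinery for the weak mean curvature.'' But that locality is precisely the theorem to be proven; quadratic tilt-excess decay by itself controls first-order (tilt) quantities and does not rule out that sheets of $M_{i}$ peeling off $\partial^{\ast}E$ to second order, or the multiplicity $\theta_{i}$, contribute to your mean height at exactly the quadratic order you want to read off, and those contributions are a priori different for $i=1$ and $i=2$. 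What closes this in R\"{o}ger's proof is the cited theorem of Sch\"{a}tzle: at $\mu_{i}$-a.e.\ point the weak mean curvature equals the trace of an approximate second-order (paraboloid) expansion squeezing $\mathrm{supp}\,\mu_{i}$ to order $o(\rho^{2})$. Granting that, the conclusion is elementary: both paraboloids squeeze the common set $\partial^{\ast}E$, whose blow-up at a good point fills the tangent plane $T_{0}$, so the difference of the two quadratic forms vanishes to order $o(\rho^{2})$ along a density-one set of directions, hence vanishes identically, and equality of the traces gives $h_{1}(x_{0})=h_{2}(x_{0})$. Without proving or explicitly invoking a result of this type, your outline does not go through; with it, the rest of your argument is sound.
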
 
The above theorem justifies the definition of the weak curvature of $\partial^{\ast} E$ if there is an $(N-1)$-integer-rectifiable varifold $\mu$ satisfying (a)-(c).\\
\h Finally, we state the following result due to
Reiner Sch\"{a}tzle \cite{Schatzle} whose proof was communicated to us by Sylvia Serfaty.
\begin{theorem} (Sch\"{a}tzle's Constancy Theorem)
Let $\mu=\theta \mathcal{H}^{n}\lfloor M$ be an integral $n$-varifold in the open set $\Omega\subset R^{n+m}$, $M\subset \Omega$ a 
connected $C^{1}$-n-manifold, $\theta: M\rightarrow N_{0}$ be $\mathcal{H}^{n}$-measurable with weak 
mean curvature $\overrightarrow{H}_{\mu}\in L^{1}(\mu)$, that is
\begin{equation}
 \int \mathrm{div}_{\mu} \eta d \mu = \int_{M} \mathrm{div}_{M}\eta \theta d\mathcal{H}^{n} = -\int <\overrightarrow {H}_{\mu}, \eta> d\mu~~
\forall \eta\in C^{1}_{0}(\Omega, \RR^{n+ m}).
\label{weakMC}
\end{equation}
Then $\theta$ is a constant: $\theta \equiv\theta_{0}\in N_{0}$. Here $N_{0}$ is the set of all nonnegative integers and $<\cdot>$ is the 
standard Euclidean inner product on $\RR^{n + m}.$
\label{Schatzle}
 \end{theorem}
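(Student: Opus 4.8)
\emph{Overall strategy and reduction.} Since $M$ is connected, it suffices to prove that $\theta$ is locally constant on $M$; the global conclusion follows at once, and the integrality hypothesis forces the common value $\theta_{0}$ to lie in $N_{0}$. The mechanism I would exploit is that, for an \emph{integral} varifold with $L^{1}$ weak mean curvature, the vector $\overrightarrow{H}_{\mu}$ is normal to the approximate tangent plane $T_{x}M$ for $\mu$-almost every $x$ (Brakke's perpendicularity theorem); it is precisely here that the hypothesis $\theta:M\to N_{0}$, rather than real weights, is essential, since constancy fails for general varifolds whose mean curvature may carry a tangential part. Granting normality, if one could test the first variation identity (\ref{weakMC}) against a vector field $X$ that is \emph{tangent} to $M$, the curvature term $-\int_{M}\langle\overrightarrow{H}_{\mu},X\rangle\,\theta\,d\mathcal{H}^{n}$ would vanish pointwise, leaving $\int_{M}\mathrm{div}_{M}X\,\theta\,d\mathcal{H}^{n}=0$. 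In a graph patch this says exactly that the distributional tangential gradient of $\theta$ vanishes, whence $\theta$ is locally constant.

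\emph{A frame-free identity.} To anchor this rigorously I would first test (\ref{weakMC}) with $\eta=\phi\,e_{\alpha}$, where $\phi\in C^{1}_{0}(\Omega)$ and $e_{\alpha}$ is a fixed coordinate vector. Since $\mathrm{div}_{M}(\phi e_{\alpha})=\langle\nabla^{M}\phi,e_{\alpha}\rangle$, with $\nabla^{M}\phi=P_{M}\nabla\phi$ the tangential gradient and $P_{M}$ the orthogonal projection onto $T_{x}M$, collecting the $n+m$ components yields the clean first-order identity
\begin{equation}
\int_{M}\nabla^{M}\phi\,\theta\,d\mathcal{H}^{n}=-\int_{M}\overrightarrow{H}_{\mu}\,\phi\,\theta\,d\mathcal{H}^{n},
\label{starid}
\end{equation}
in which no second derivatives of $M$ appear. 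Identity (\ref{starid}) encodes that the tangential derivative of the weight $\theta$ equals $-\theta\,\overrightarrow{H}_{\mu}$ as normal-valued vector measures. Fixing $p\in M$, rotating so that $T_{p}M=\RR^{n}\times\{0\}$, and writing $M$ locally as a graph of $g\in C^{1}$ with $Dg(p)=0$, the density $\tilde{\theta}(x)=\theta(x,g(x))$ is the object whose local constancy I must establish, the geometric coefficients $P_{M}$ and the area factor $J=\sqrt{\det(I+Dg^{\mathsf T}Dg)}$ being merely continuous.

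\emph{Approximation.} To legitimately pair (\ref{weakMC}) with tangential fields I would mollify, $g_{\delta}\to g$ in $C^{1}_{\mathrm{loc}}$, and use the smooth graph $M_{\delta}$ only to manufacture smooth fields $\eta_{\delta}=\sum_{k}a^{k}\tau^{\delta}_{k}\chi$ tangent to $M_{\delta}$ (here $\tau^{\delta}_{k}=(e_{k},\partial_{k}g_{\delta})$ and $a^{k}\in C^{\infty}_{0}$), which are genuine admissible test fields. Plugging $\eta_{\delta}$ into (\ref{weakMC}), the curvature term is controlled because $\eta_{\delta}$ is almost tangent to $M$: since $\overrightarrow{H}_{\mu}\perp T_{x}M$, its integrand is bounded pointwise by $\|P_{M_{\delta}}-P_{M}\|\,|\eta_{\delta}|\,|\overrightarrow{H}_{\mu}|=O(\|Dg_{\delta}-Dg\|_{\infty})\,|\overrightarrow{H}_{\mu}|$, which tends to $0$ because $\overrightarrow{H}_{\mu}\in L^{1}(\mu)$ has fixed mass while the projections converge uniformly. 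The admissible class of coefficients $a^{k}$ reproduces, in the limit, an arbitrary $b\in C^{1}_{0}(U,\RR^{n})$ tested against $\nabla_{x}\tilde{\theta}$, so once the principal term is understood one obtains $\nabla_{x}\tilde{\theta}=0$ distributionally and hence local constancy.

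\emph{Main obstacle.} The genuine difficulty, and the reason the statement lies deeper than the classical constancy theorem for $C^{2}$ supports, is that $M$ is only $C^{1}$. Consequently $M_{\delta}$ has mean curvature of order $|D^{2}g_{\delta}|$, which need not stay bounded as $\delta\to0$; and the principal term $\int_{M}\mathrm{div}_{M}\eta_{\delta}\,\theta\,d\mathcal{H}^{n}$ contains a second-order piece $\int_{U}\sum_{i,j,k}(g^{M})^{ij}a^{k}\,(\partial_{i}\partial_{k}g_{\delta}\cdot\partial_{j}g)\,\tilde{\theta}J\,dx$ that cannot be integrated by parts in $x$ without differentiating the non-smooth $\tilde{\theta}$. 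I expect this term to be the crux. My plan to control it is quantitative: with $g_{\delta}$ the mollification of $g$ at scale $\delta$ one has $\|D^{2}g_{\delta}\|_{L^{\infty}(B_{r})}\le C\delta^{-1}\omega(\delta)$, where $\omega$ is the modulus of continuity of $Dg$, while $|Dg|\le\omega(r)$ on $B_{r}(p)$ because $Dg(p)=0$; coupling the mollification scale $\delta$ to the localization radius $r$ (and, if needed, invoking tilt-excess information and Lebesgue points of $\tilde{\theta}$) should force the second-order contribution to zero. Establishing this coupling rigorously is the technical heart of the theorem, after which connectedness of $M$ gives $\theta\equiv\theta_{0}\in N_{0}$.
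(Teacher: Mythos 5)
You and the paper run the same engine: Brakke's perpendicularity theorem (Theorem 5.8 in \cite{Brakke}) gives $\overrightarrow{H}_{\mu}\perp T_{x}M$ for $\mu$-a.e.\ $x$ because $\mu$ is integral; testing (\ref{weakMC}) with fields tangent to $M$ then kills both curvature terms and leaves $\int_{M}\mathrm{div}_{M}\eta^{tan}\,\theta\, d\mathcal{H}^{n}=0$; in local coordinates this says $\nabla_{M}\theta=0$ weakly, and connectedness gives $\theta\equiv\theta_{0}\in N_{0}$. (The paper additionally tests with purely normal fields to identify $\overrightarrow{H}_{\mu}=\overrightarrow{H}_{M}$, which it needs later for the Gibbs--Thomson relation (\ref{local2}), but that is not required for constancy.) So in spirit your proposal is the paper's proof.

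The gap is the step you yourself flag and do not carry out. Tangent fields along a $C^{1}$ manifold are only continuous, hence not admissible in (\ref{weakMC}); your remedy is to test with fields tangent to a mollified graph $M_{\delta}$. Your control of the curvature error is fine (the defect of tangency is $O(\|Dg_{\delta}-Dg\|_{\infty})$ paired against $\overrightarrow{H}_{\mu}\in L^{1}(\mu)$), but the second-order piece you isolate, schematically $\int a^{k}\,(\text{factor bounded by }|Dg|)\cdot D^{2}g_{\delta}\,\tilde{\theta}J\,dx$, cannot be integrated by parts ($\tilde{\theta}$ is merely measurable, $Dg$ and $J$ merely continuous), and the quantitative coupling you sketch does not close it: with $\|D^{2}g_{\delta}\|_{\infty}\lesssim\delta^{-1}\omega(\delta)$ and $|Dg|\leq\omega(r)$ on $B_{r}(p)$, the choice $\delta\sim r$ yields a bound of order $\omega(r)^{2}/r$, which stays bounded only when $\omega(r)\lesssim r^{1/2}$ and diverges for a general $C^{1}$ modulus; converting the residual smallness \emph{relative} to the principal term into the distributional statement $\nabla\tilde{\theta}=0$ (your ``tilt excess and Lebesgue points'' remark) is exactly the unproven technical heart. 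It is worth noting that the paper's proof does not confront this difficulty at all: it posits locally $C^{1}$ orthonormal normal fields $\nu^{1},\dots,\nu^{m}$ on $M$ and differentiates them, which for a graph amounts to requiring $M$ to be $C^{2}$ --- harmless in this paper, since the theorem is only ever applied to interfaces $\Gamma(t)$ that are $C^{2}$ or $C^{3}$. If you grant yourself the same regularity (differentiable tangential and normal projections along $M$), your tangential test fields are admissible as they stand, the mollification becomes unnecessary, and your argument closes and coincides with the paper's; as a proof of the literal $C^{1}$ statement, however, it is incomplete.
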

\begin{proof}
We consider locally $C^{1}$-vector fields $\nu^{1}, \cdots, \nu^{m}$ on $M$, which are an orthonormal basis of 
the orthogonal complement $TM^{\perp}$ of the tangent bundle $TM$ in $T\RR^{n +m}$. For $x\in M$, we 
choose an orthonormal basis $\tau_{1},\cdots, \tau_{n}$ of the tangent space $T_{x}M$ of $M$ at $x$. We decompose 
$\eta\in C^{1}_{0}(\Omega, \RR^{n + m})$ into $
 \eta =\eta^{tan} + \eta^{\perp}, $
where $$\eta^{tan}(x)=\pi_{T_{x}M}(\eta(x))\in T_{x}M,~\h
\eta^{\perp}(x)=\pi_{T_{x}M^{\perp}}(\eta(x))=\sum_{j=1}^{m}<\nu^{j}, \eta(x)>\nu^{j}\in T_{x}M^{\perp}.
 $$
Here, we have denoted $\pi_{V}$ the orthogonal projection operator on the subspace $V$ of $\RR^{n +m}$.
In particular, $
 \eta^{tan}, \eta^{\perp}\in C^{1}_{0}(\Omega).$ Then, we have $
 \mathrm{div}_{M} \eta = \mathrm{div}_{M} \eta^{tan}  + \mathrm{div}_{M} \eta^{\perp}. $
Let $D$ be the standard differentiation operator on $\RR^{n +m}$ 
and $A_{M}$ the second fundamental form of $M$. Denote by $\overrightarrow{H}_{M}$ the weak mean curvature of $M$. Then
\begin{equation*}
\overrightarrow{H}_{M} = \sum_{i=1}^{n} A_{M}(\tau_{i}, \tau_{i}).
\end{equation*}
We have
\begin{eqnarray*}
 \mathrm{div}_{M} \eta^{\perp} &=& \sum_{i=1}^{n}<\tau_{i}, \nabla^{M}_{\tau_{i}}\eta^{\perp}> =\sum_{i=1}^{n}\sum_{j=1}^{m}
<\tau_{i}, D_{\tau_{i}}\left(<\nu^{j}, \eta(x)>\nu^{j}\right)> \\&=&
\sum_{i=1}^{n}\sum_{j=1}^{m}
<\nu^{j},\eta>< \tau_{i}, D_{\tau_{i}}\nu^{j}>
= -<\eta, \sum_{i=1}^{n} A_{M}(\tau_{i}, \tau_{i})>
= -<\eta, \overrightarrow{H}_{M}>.
\end{eqnarray*}
From (\ref{weakMC}), we can calculate
\begin{eqnarray*}
 -\int <\overrightarrow{H}_{\mu},\eta> d\mu = -\int_{M}<\overrightarrow{H}_{\mu},\eta> \theta d\mathcal{H}^{n}
 &=& \int_{M}\mathrm{div}_{M}\eta\theta d\mathcal{H}^{n}\\
&=& \int_{M}\mathrm{div}_{M}\eta^{tan}\theta d\mathcal{H}^{n} +  \int_{M}\mathrm{div}_{M}\eta^{\perp}\theta d\mathcal{H}^{n}\\
&=&  \int_{M}\mathrm{div}_{M}\eta^{tan}\theta d\mathcal{H}^{n} - \int_{M}<\overrightarrow{H}_{M},\eta> \theta d\mathcal{H}^{n}.
\end{eqnarray*}
Let us make some special choices of $\eta$. First, for $\eta =\eta^{\perp} \in TM^{\perp}$, we conclude 
that the projection $\overrightarrow{H}^{\perp}_{\mu}$ of $\overrightarrow{H}_{\mu} $ on 
$TM^{\perp}$ satisfies $\overrightarrow{H}^{\perp}_{\mu} = \overrightarrow{H}_{M}$. Since $\mu$ is integral, 
we get $\overrightarrow{H}_{\mu} \bot T\mu = TM$ by Theorem 5. 8 in Brakke \cite{Brakke} and conclude $
 \overrightarrow{H}_{\mu} = \overrightarrow{H}_{M}.$
Finally, if we choose $\eta$ such that $\eta = \eta^{tan}\in TM$ then
\begin{equation*}
 \int_{M}\mathrm{div}_{M}\eta^{tan}\theta d\mathcal{H}^{n} =0.
\end{equation*}
Calculating in local coordinates, this yields $\nabla_{M}\theta =0$ weakly. Hence $\theta \equiv \theta_{0}$ is constant, as $M$ 
is connected.
\end{proof}

\h From the liminf inequality of $\Gamma$-convergence, we know that, for all $t$
\begin{equation*}
 \liminf_{\e\rightarrow 0} E_{\e}(u^{\e}(t)) \geq E(u(t))\equiv \sigma\int_{\Omega} \abs{\nabla u(t)} +\frac{\lambda}{2}\int_{\Omega}\abs{\nabla v(t)}^2 dx.
\end{equation*}
Using Sch\"{a}tzle's constancy theorem and Tonegawa's convergence theorem, we will improve the above inequality in (\ref{ttime}) as follows
\begin{equation}
 \liminf_{\e\rightarrow 0} E_{\e}(u^{\e}(t)) \geq \theta_{0}(t)\sigma\int_{\Omega} \abs{\nabla u(t)} +
\frac{\lambda}{2}\int_{\Omega}\abs{\nabla v(t)}^2 dx,
\label{highermult1}
\end{equation}
where $\theta_{0}(t)$ is an odd integer. In order to establish the convergence of (\ref{OKeq})
to (\ref{NO}) using the $\Gamma$-convergence of gradient flows scheme, we must rule out the higher multiplicity (i.e., the case where 
$\theta_{0}(t)>1$) of the interface $\Gamma(t)$ for all $t$ (see Remark \ref{DGrk}). Therefore, it is natural to find an upper bound for the
left hand side of (\ref{highermult1}) to ensure, with possibly extra conditions, that $\theta_{0}(t) =1.$ \\
\h As a first step to rule out the higher multiplicity issue of the limiting interfaces $\Gamma(t)$, we will use 
Theorem \ref{Schatzle} to establish the following important result concerning (\ref{OKeq}).
\begin{propo}
Suppose that for each $t\in [0, T]$, $\Gamma(t)$ is $C^{2}$ and 
that the interface $\Gamma(t)$ is $C^{\alpha}$ in time (cf. (A2) of Theorem \ref{dynamics}), i.e, 
\begin{equation}
 \abs{\int_{\Omega} \abs{\nabla u(t)}-\abs{\nabla u(s)}}\leq C\abs{t-s}^{\alpha}~~\text{for some} ~\alpha>0.
\label{utime}
\end{equation}
Then, there exists $\delta(0)>0$ depending only on the initial data $u(0)$ such that the well-preparedness of initial data 
guarantees  for $L^{1}$ a.e. $t\in (0, 
\delta(0)]$,
the interface $\Gamma(t)$ has multiplicity one. Precisely, there exists a Radon measure $\mu(t)$ on $\Omega$ such that, 
up to extracting a subsequence, we have the following convergence in the sense of Radon measures, 
\begin{equation*}\left(\frac{\varepsilon\abs{\nabla u^{\varepsilon}(t)}^2}{2} +
\frac{W(u^{\varepsilon}(t))}{\varepsilon}\right)dx  \rightharpoonup \mu(t).
\end{equation*}
Moreover, $\Gamma(t)\subset supp~\mu(t)$; $(2\sigma)^{-1}\mu(t)$ is $(N-1)$-integer-rectifiable varifold with $(N-1)$-dimensional density
\begin{equation*}
 \theta^{(N-1)} (\mu(t),\cdot) = \theta(t)(\cdot)2\sigma
\end{equation*}
and
\begin{equation}
 \theta(t)(\cdot) \equiv 1~\text{on}~\Gamma(t).
\label{milddens}
\end{equation}
\label{mild}
\end{propo}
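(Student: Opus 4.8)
The plan is to verify, at $L^1$-a.e.\ time slice, the hypotheses of Tonegawa's convergence theorem, then to combine R\"oger's locality and Sch\"atzle's constancy theorems to reduce the density on $\Gamma(t)$ to a single odd constant on each connected component, and finally to use the well-preparedness of the initial data together with the $C^\alpha$-in-time continuity to pin that constant down to $1$ for short time.

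First I would select, via Proposition \ref{selection}, a subsequence along which $u^\e(\cdot,t)\to u(\cdot,t)$ in $L^2(\Omega)$ for every time slice. The gradient-flow structure yields the dissipation identity $\int_0^T\norm{\nabla w^\e(t)}_{L^2(\Omega)}^2\,dt=E_\e(u^\e(0))-E_\e(u^\e(T))\le M$, so by Fatou's lemma $\liminf_{\e\to0}\norm{\nabla w^\e(t)}_{L^2}^2<\infty$ for $L^1$-a.e.\ $t$. Fixing such a good $t$ and passing to a further ($t$-dependent) subsequence along which $\norm{\nabla w^\e(t)}_{L^2}$ is bounded, I note that the chemical potential is $\varepsilon\Delta u^\e-\varepsilon^{-1}f(u^\e)=w^\e+\lambda v^\e$ and that $v^\e(t)\to v(t)$ in $H^1(\Omega)$. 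A standard computation, using the Neumann condition and the mass constraint (\ref{massp}) to control the average and then Poincar\'e, upgrades the gradient bound to a uniform $W^{1,2}(\Omega)$ bound; since $N\le3$ we have $2>N/2$, so Theorem \ref{Toneconvthm} applies and produces the Radon measure $\mu(t)$, its integer rectifiability, the density $\theta(t)2\sigma$ with $\theta(t)$ integer-valued and odd on $\partial^\ast\{u(t)=1\}=\Gamma(t)$, and the weak mean curvature $\overrightarrow H_{\mu(t)}\in L^{2(N-1)/(N-2)}_{loc}(\mu(t))$.

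Next I would show that $\theta(t)$ is constant on $\Gamma(t)$. The set $E=\{u(t)=1\}$ has finite perimeter with $\partial^\ast E=\Gamma(t)$, which is $C^2$. I apply Theorem \ref{Rogerlocality} with $\mu_1=(2\sigma)^{-1}\mu(t)$ (integer-rectifiable by Theorem \ref{Toneconvthm}) and $\mu_2=\mathcal H^{N-1}\lfloor\Gamma(t)$ (multiplicity one, whose weak mean curvature is the classical mean curvature vector $\overrightarrow H_{\Gamma(t)}\in L^\infty$); the integrability requirement $s>\max\{N-1,2\}$ holds in dimensions $N\le3$ by the regularity of $\overrightarrow H_{\mu(t)}$ recorded in Theorem \ref{Toneconvthm}(v). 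The conclusion is that the weak mean curvature of $(2\sigma)^{-1}\mu(t)$ restricted to $\Gamma(t)$ equals $\overrightarrow H_{\Gamma(t)}$, hence lies in $L^\infty\subset L^1$. Since $\Gamma(t)$ is a union of closed (boundaryless) hypersurfaces, the restricted integral varifold $(2\sigma)^{-1}\mu(t)\lfloor\Gamma(t)=\theta(t)\mathcal H^{N-1}\lfloor\Gamma(t)$ meets the hypotheses of Theorem \ref{Schatzle} on each connected $C^1$ component of $\Gamma(t)$, whence $\theta(t)\equiv\theta_0(t)$ for a (component-wise) odd integer $\theta_0(t)$.

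It then remains to rule out $\theta_0(t)\ge3$, and this is the step where the dynamics enters and which I expect to be the main obstacle. Lower semicontinuity together with the constancy just obtained gives the multiplicity-weighted bound (\ref{highermult1}), namely $\liminf_{\e\to0}E_\e(u^\e(t))\ge\theta_0(t)\sigma\int_\Omega\abs{\nabla u(t)}+\frac\lambda2\int_\Omega\abs{\nabla v(t)}^2$. On the other hand, monotonicity of $E_\e$ along the flow together with well-preparedness (A1) gives the matching upper bound $\limsup_{\e\to0}E_\e(u^\e(t))\le\lim_{\e\to0}E_\e(u^\e(0))=E(u(0))=\sigma\int_\Omega\abs{\nabla u(0)}+\frac\lambda2\int_\Omega\abs{\nabla v(0)}^2$. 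Writing $P(t)=\int_\Omega\abs{\nabla u(t)}$ and $Q(t)=\int_\Omega\abs{\nabla v(t)}^2$, the two bounds combine to $\theta_0(t)\sigma P(t)+\frac\lambda2 Q(t)\le\sigma P(0)+\frac\lambda2 Q(0)$. Assumption (\ref{utime}) gives $\abs{P(t)-P(0)}\le Ct^\alpha$, while the uniform $L^2$-in-time control of the $H^{-1}_n$-velocity yields H\"older continuity in time of $u$ in $H^{-1}(\Omega)$, so $Q(t)\to Q(0)$ as $t\to0$. If $\theta_0(t)\ge3$ on some component, then $2\sigma P(t)\le\sigma(P(0)-P(t))+\frac\lambda2(Q(0)-Q(t))\to0$ as $t\to0$, contradicting $P(t)\to P(0)>0$ (the interface $\Gamma(0)$ is nontrivial). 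Choosing $\delta(0)>0$ small enough, depending only on $P(0)$, $Q(0)$ and the continuity constants and hence only on $u(0)$, forces $\theta_0(t)=1$ on every component for $L^1$-a.e.\ $t\in(0,\delta(0)]$, which is precisely (\ref{milddens}). The essential point, and the reason the assertion cannot be purely static, is that the value $\theta_0=1$ is pinned down only by coupling the static lower bound (\ref{highermult1}) with the dynamic upper bound coming from energy dissipation and well-preparedness; the R\"oger--Sch\"atzle machinery alone yields constancy of the density but not its value.
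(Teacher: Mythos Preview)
Your proposal is correct and follows essentially the same route as the paper: Fatou on the dissipation to get an $H^1$ bound on the chemical potential at a.e.\ time, Tonegawa's theorem for the varifold structure, R\"oger's locality plus Sch\"atzle's constancy to reduce the density on $\Gamma(t)$ to an odd integer constant, and then the energy-monotonicity/well-preparedness upper bound combined with the $C^\alpha$-in-time continuity of the perimeter to force that constant to be $1$ for short time. The only cosmetic difference is that the paper controls $Q(t)=\int|\nabla v(t)|^2$ via the $L^2$-H\"older continuity estimate $\|u^\e(t)-u^\e(s)\|_{L^2}\le C|t-s|^{1/8}$ (its Lemma~4.2), whereas you invoke the $H^{-1}$-velocity bound directly; both yield the needed continuity of $Q$ at $t=0$.
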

Here we call that the $(N-1)$-dimensional density $\theta^{(N-1)} (\mu(t),x)$ alluded to above is defined as follows
\begin{equation*}
 \theta^{(N-1)} (\mu(t), x) =\lim_{r\rightarrow 0} \frac{\mu(t) (B(x, r))}{\omega_{N-1} r^{N-1}}
\end{equation*}
where $\omega_{N-1}$ is the volume of the unit ball in $\RR^{N-1}$.\\
\h The idea of the proof is very simple. H\"{o}lder continuous hypersurfaces can not change much length in a short time. If we have higher 
constant integer multiplicity at a later time then to some extent, we will have more energy in $E_{\e}$. But this is a contradiction because the energy is 
decreasing in time for (\ref{OKeq}). Key to our proof is the following inequality for $t$ sufficiently small
\begin{equation*}
 \limsup_{\e\rightarrow 0} E_{\e}(u^{\e}(t)) < 2\sigma\int_{\Omega} \abs{\nabla u(t)} +\frac{\lambda}{2}\int_{\Omega}\abs{\nabla v(t)}^2 dx.
\end{equation*}
\\
\h As a preparation for the proof, we prove the following time-continuity estimates for $u^{\e}$ in $L^{2}(\Omega)$ and $v^{\e}$ in $H^{1}(\Omega)$. 
\begin{lemma}
(i) For all $s, t\in [0, T]$
\begin{equation}
 \norm{u^{\e}(s)-u^{\e}(t)}_{L^{2}(\Omega)}\leq C\abs{t-s}^{1/8}.
\label{volholder}
\end{equation}
(ii).   For all $s, t\in [0, T]$
\begin{equation}
\abs{ \int_{\Omega} \left(\abs{\nabla v^{\e}(s)}^2 -\abs{\nabla v^{\e}(t)}^2\right) dx}\leq C\abs{t-s}^{1/8}.
\label{vtime}
\end{equation}
\end{lemma}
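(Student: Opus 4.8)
The plan is to derive both estimates from the energy--dissipation identity of the $H^{-1}_{n}(\Omega)$ gradient flow together with the uniform energy bound. First I would record the dissipation identity: since (\ref{OKeq}) is the $H^{-1}_{n}(\Omega)$ gradient flow of $E_{\e}$ (Section \ref{grOK}) and $\norm{\partial_{t}u^{\e}}_{H^{-1}_{n}(\Omega)}=\norm{\nabla w^{\e}}_{L^{2}(\Omega)}$, one has $\frac{d}{dt}E_{\e}(u^{\e}(t))=-\norm{\nabla w^{\e}(t)}_{L^{2}(\Omega)}^{2}$, whence $\int_{0}^{T}\norm{\partial_{t}u^{\e}(t)}_{H^{-1}_{n}(\Omega)}^{2}\,dt\le E_{\e}(u^{\e}(0))\le M$. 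Writing $u^{\e}(s)-u^{\e}(t)=\int_{t}^{s}\partial_{\tau}u^{\e}\,d\tau$ and applying Cauchy--Schwarz in time gives the basic H\"older-$1/2$ bound in the weak norm,
\begin{equation*}
\norm{u^{\e}(s)-u^{\e}(t)}_{H^{-1}_{n}(\Omega)}\le \abs{s-t}^{1/2}\Big(\int_{t}^{s}\norm{\partial_{\tau}u^{\e}}_{H^{-1}_{n}(\Omega)}^{2}\,d\tau\Big)^{1/2}\le M^{1/2}\abs{s-t}^{1/2}.
\end{equation*}
By (\ref{massp}) the difference $u^{\e}(s)-u^{\e}(t)$ has mean zero, so this weak norm coincides with the genuine $H^{-1}(\Omega)$ norm.

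Part (ii) then follows almost immediately. Since $v^{\e}(t)=\Delta^{-1}(u^{\e}(t)-m_{\e})$ we have $\norm{\nabla v^{\e}(t)}_{L^{2}(\Omega)}=\norm{u^{\e}(t)-m_{\e}}_{H^{-1}(\Omega)}$, which is bounded uniformly in $t$ and $\e$ (from the nonlocal part of $E_{\e}(u^{\e}(t))\le M$ when $\lambda>0$, and from the uniform $L^{2}$ bound derived below when $\lambda=0$). Factoring the difference of squares,
\begin{multline*}
\Big|\int_{\Omega}\big(\abs{\nabla v^{\e}(s)}^{2}-\abs{\nabla v^{\e}(t)}^{2}\big)dx\Big|\\
=\big(\norm{u^{\e}(s)-m_{\e}}_{H^{-1}}+\norm{u^{\e}(t)-m_{\e}}_{H^{-1}}\big)\,\big|\norm{u^{\e}(s)-m_{\e}}_{H^{-1}}-\norm{u^{\e}(t)-m_{\e}}_{H^{-1}}\big|,
\end{multline*}
I would bound the first factor by a constant and the second, via the reverse triangle inequality, by $\norm{u^{\e}(s)-u^{\e}(t)}_{H^{-1}}\le C\abs{s-t}^{1/2}$. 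This yields (\ref{vtime}) (in fact with exponent $1/2$, hence a fortiori $1/8$ on the bounded interval $[0,T]$).

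The substantive work is part (i): upgrading the $H^{-1}$ bound to an $L^{2}$ bound \emph{uniformly in} $\e$. From $\int_{\Omega}W(u^{\e})/\e\le M$ I would first extract $\int_{\Omega}(\abs{u^{\e}}^{2}-1)^{2}\le 2M\e$, which yields the $\e$-independent bounds $\norm{u^{\e}(t)}_{L^{2}(\Omega)}\le C$ and $\norm{u^{\e}(t)}_{L^{4}(\Omega)}\le C$, whereas the gradient part gives only the singular bound $\norm{u^{\e}(t)}_{H^{1}(\Omega)}\le C\e^{-1/2}$. The naive interpolation $\norm{\cdot}_{L^{2}}^{2}\le\norm{\cdot}_{H^{-1}}\norm{\cdot}_{H^{1}}$ then produces $\norm{u^{\e}(s)-u^{\e}(t)}_{L^{2}(\Omega)}\le C\e^{-1/4}\abs{s-t}^{1/4}$, which degenerates as $\e\to0$. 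To kill the $\e$-factor I would instead interpolate $L^{2}$ between $H^{-1}$ and the $\e$-independent $L^{4}$ bound: the Gagliardo--Nirenberg inequality $\norm{v}_{L^{2}(\Omega)}\le C\norm{v}_{H^{-1}(\Omega)}^{\theta}\norm{v}_{L^{4}(\Omega)}^{1-\theta}$ for mean-zero $v$ (with $\theta=N/(N+4)\ge 1/3$ in dimensions $N\le 3$), applied to $v=u^{\e}(s)-u^{\e}(t)$, gives $\norm{u^{\e}(s)-u^{\e}(t)}_{L^{2}(\Omega)}\le C\abs{s-t}^{\theta/2}$ with $\theta/2\ge 1/6\ge 1/8$, proving (\ref{volholder}). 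Equivalently one may split into the regimes $\e\ge\abs{s-t}^{1/2}$, where the $\e^{-1/4}\abs{s-t}^{1/4}$ bound already gives exponent $1/8$ at the threshold, and $\e<\abs{s-t}^{1/2}$, where the $L^{4}$ interpolation is used; either way $1/8$ is a safe, non-optimal exponent.

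The main obstacle is precisely this $\e$-uniform upgrade in (i): the energy bound controls $\norm{u^{\e}}_{H^{1}}$ only up to the singular factor $\e^{-1/2}$, so interpolating $L^{2}$ between $H^{-1}$ and $H^{1}$ loses uniformity. The device that resolves it is that the potential term pins $u^{\e}$ near $\pm 1$, hence gives an $\e$-independent $L^{4}$ bound, and it is interpolation against this bound rather than against the singular $H^{1}$ norm that produces a H\"older estimate with an $\e$-independent constant.
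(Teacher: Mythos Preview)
Your argument for (ii) is correct and in fact sharper than the paper's: you obtain exponent $1/2$ directly from the $H^{-1}$ time-continuity, whereas the paper derives (ii) from (i) via the elliptic estimate $\norm{\nabla(v^{\e}(s)-v^{\e}(t))}_{L^{2}}\le C\norm{u^{\e}(s)-u^{\e}(t)}_{L^{2}}$ and hence only gets $1/8$. Your route is more direct and decouples (ii) from (i).

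Part (i), however, has a genuine gap. The interpolation you invoke,
\[
\norm{v}_{L^{2}(\Omega)}\le C\,\norm{v}_{H^{-1}(\Omega)}^{\theta}\,\norm{v}_{L^{4}(\Omega)}^{1-\theta}\qquad (\theta>0),
\]
is \emph{false} for mean-zero functions on a bounded domain. Take $\Omega=(0,\pi)^{N}$ with Neumann conditions and $v_{k}(x)=\cos(kx_{1})$: then $\norm{v_{k}}_{L^{2}}$ and $\norm{v_{k}}_{L^{4}}$ are bounded away from zero uniformly in $k$, while $\norm{v_{k}}_{H^{-1}}=\norm{\nabla\Delta^{-1}v_{k}}_{L^{2}}\sim k^{-1}\to 0$. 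So no positive power of $\norm{v}_{H^{-1}}$ can appear on the right with an $L^{4}$ factor alone. (The Sobolev embedding $L^{2N/(N+2)}\hookrightarrow H^{-1}$ goes the wrong way for this purpose.) Your ``regime-splitting'' alternative inherits the same flaw: in the regime $\e<\abs{s-t}^{1/2}$ you again appeal to this nonexistent $L^{4}$ interpolation.

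The actual proof of (i) (Chen, Lemma~3.2 in \cite{ChenCH}, to which the paper refers) does not proceed by a functional-analytic interpolation of this type. It exploits the structure of the double-well potential more substantially than the bare $L^{4}$ bound: the smallness $\int_{\Omega}((u^{\e})^{2}-1)^{2}\le C\e$ forces $u^{\e}$ to be close to $\pm 1$ in a quantitative, pointwise-in-measure sense, and this is combined with the weak time-continuity through a carefully chosen test function (rather than a generic Gagliardo--Nirenberg inequality). Your diagnosis of the obstacle---that the singular $H^{1}$ bound must be avoided---is correct, but the remedy requires a problem-specific argument, not a general interpolation.
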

\begin{proof}
Item (i) can be proved similarly as in the proof of Lemma 3.2 in \cite{ChenCH}. Now we prove $(ii)$. 
We have
\begin{equation*}
 \abs{ \int_{\Omega} \left(\abs{\nabla v^{\e}(s)}^2 -\abs{\nabla v^{\e}(t)}^2\right) dx}\leq (\norm{\nabla v^{\e}(s)}_{L^{2}(\Omega)}+
 \norm{\nabla v^{\e}(t)}_{L^{2}(\Omega)}) \norm{\nabla (v^{\e}(s)-v^{\e}(t))}_{L^{2}(\Omega)}.
\end{equation*}
The standard estimate
\begin{equation}
 \norm{\nabla v^{\e}}_{L^{2}(\Omega)}\leq C\norm{u^{\e}-\overline{u^{\e}_{\Omega}}}_{L^{2}(\Omega)}
\label{standa}
\end{equation}
combined with (\ref{massp}) implies that
\begin{equation*}
 \norm{\nabla (v^{\e}(s)-v^{\e}(t))}_{L^{2}(\Omega)}\leq C\norm{u^{\e}(s)-u^{\e}(t)}_{L^{2}(\Omega)}
\end{equation*}
Recalling $(i)$, we obtain the desired inequality.
\end{proof}
\h Now, we are ready to prove Proposition \ref{mild}.
\begin{proof}[Proof of Proposition \ref{mild}] To simplify the proof of our Proposition, we can assume further that $\Gamma(t)$ consists of one closed, connected hypersurface. Our proof can be modified easily to cover the case $\Gamma(t)$ consists of finitely many closed, connected hypersurfaces as in Theorem \ref{dynamics}. For each time slice $t\in [0, T]$, we have
\begin{equation*}
 E_{\e} (u^{\e}(t)) =  E_{\e}(u^{\e}(0))-\int_{0}^{t}\norm{\nabla w^{\e}(s)}^2_{L^{2}(\Omega)}ds\leq M.
\end{equation*}
In particular
\begin{equation}
 \int_{\Omega}\left(\frac{\varepsilon}{2}\abs{\nabla u^{\e}(t)}^2 +\frac{1}{\varepsilon}W(u^{\e}(t))\right) dx \leq E_{\e}(u^{\e}(t)) \leq M
\label{energyttime}
\end{equation}
and by Fatou's lemma, for $L^{1}$ a.e $t\in [0, T]$,
\begin{equation}
 \liminf_{\e\rightarrow 0} \norm{\nabla w^{\e}(t)}^2_{L^{2}(\Omega)}<\infty.
\label{Fatout}
\end{equation}
Recall that 
\begin{equation*}
 \varepsilon \Delta u^{\varepsilon}-\varepsilon^{-1}f(u^{\varepsilon})= w^{\e} + \lambda v^{\e}: = k^{\e}(t). 
\end{equation*}
From the energy bound and the mass constraint (\ref{massp}) and  in view of 
Lemma 3.4 in \cite{ChenCH}, which gives an upper bound for $\norm{k^{\varepsilon}(t)}_{H^{1}(\Omega)}$ in terms of the energy
 $E_{\varepsilon}(u^{\varepsilon}(t))$ and the homogeneous $H^{1}$-norm $\norm{\nabla k^{\varepsilon}(t)}_{L^{2}(\Omega)}$, we have for all $\varepsilon$ sufficiently 
small 
\begin{eqnarray*}
\norm{w^{\varepsilon}(t)+ \lambda v^{\e}(t)}_{H^{1}(\Omega)} = \norm{k^{\varepsilon}(t)}_{H^{1}(\Omega)} &\leq&
C(E_{\varepsilon}(u^{\varepsilon}(t)) + \norm{\nabla k^{\varepsilon}(t)}_{L^{2}(\Omega)} )\\&=&
 C (E_{\varepsilon}(u^{\varepsilon}(t)) + \norm{\nabla w^{\varepsilon}(t) + \lambda \nabla v^{\e}(t)}_{L^{2}(\Omega)})\\ &\leq& C (M + 
\norm{\nabla w^{\varepsilon}(t)}_{L^{2}(\Omega)} + \norm{\nabla v^{\e}(t)}_{L^{2}(\Omega)}).
\end{eqnarray*}
Moreover, (\ref{energyttime}) gives a uniform upper bound for $u^{\e}(t)$ in $L^{4}(\Omega)$ and hence
\begin{equation*}
\norm{u^{\e}(t)-\overline{u^{\e}_{\Omega}}(t)}_{L^{2}(\Omega)} \leq CM.
\end{equation*}
Because $v^{\e}(t)$ has average $\overline{v^{\e}_{\Omega}} =0$ for each $t$, the Poincare inequality and (\ref{standa}) gives
\begin{eqnarray}
 \norm{w^{\varepsilon}(t)}_{H^{1}(\Omega)}&\leq& \norm{w^{\varepsilon}(t)+ \lambda v^{\e}(t)}_{H^{1}(\Omega)} + \norm{-\lambda v^{\e}(t)}_{H^{1}(\Omega)}\nonumber\\ &\leq&
C (M + 
\norm{\nabla w^{\varepsilon}(t)}_{L^{2}(\Omega)} + \norm{u^{\e}(t)-\overline{u^{\e}_{\Omega}}(t)}_{L^{2}(\Omega)})\nonumber\\ &\leq& 
C(M + \norm{\nabla w^{\varepsilon}(t)}_{L^{2}(\Omega)} ).
\label{Chenlem}
\end{eqnarray}
and 
\begin{equation}
 \norm{k^{\varepsilon}(t)}_{H^{1}(\Omega)}= \norm{w^{\varepsilon}(t)+ \lambda v^{\e}(t)}_{H^{1}(\Omega)} \leq 
C(M + \norm{\nabla w^{\varepsilon}(t)}_{L^{2}(\Omega)} ).
\label{boundk}
\end{equation}
By (\ref{Fatout}), we have the uniform bound in $H^{1}(\Omega)$ of $k^{\e}(t)$ for a.e $t\in [0, T]$. 
This combined with (\ref{energyttime}) allows us to apply Tonegawa's convergence theorem (see Theorem 1 
in \cite{TonePhase}). {\it For ease of notation, we drop a.e for the moment.} Up to extracting a subsequence, $k^{\e}(t)$ converges weakly
 to $k(t)$ in $H^{1}(\Omega)$ and 
there exists a Radon measure $\mu(t)$ on $\Omega$ such that, 
in the sense of Radon measures, 
\begin{equation*}\left(\frac{\varepsilon\abs{\nabla u^{\varepsilon}}^2}{2} +
\frac{W(u^{\varepsilon})}{\varepsilon}\right)dx  \rightharpoonup \mu(t).
\end{equation*}
Moreover, $(2\sigma)^{-1}\mu(t)$ is $(N-1)$-integer-rectifiable varifold with $(N-1)$-dimensional density
\begin{equation}
 \theta^{(n-1)} (\mu(t),\cdot) = \theta(t)(\cdot)2\sigma
\end{equation}
where $\theta(t)(\cdot)$ is integer-valued. Furthermore, $\mu(t)$ has weak mean curvature $\overrightarrow{H_{\mu}} (t)
\in L^{\frac{2(N-1)}{N-2}}_{loc}(\mu)$ and 
\begin{equation}
 \overrightarrow{H_{\mu}}(t) = \frac{k(t)}{\theta(t)\sigma}\nu \in L^{2}(\mu(t)).
\label{MC}
\end{equation}
which holds $\mu$-almost everywhere, where $\nu =\frac{\nabla u}{\abs{\nabla u}}$ on $\partial^{\ast}\{u=1\}\cap 
\Omega=\Gamma(t)$ and $\nu=0$ elsewhere.\\
\h It follows from our assumption $N\leq 3$ that $
 \frac{2(N-1)}{N-2}> \text{max}\{N-1,2\}. $ Thus, the locality result of R\"{o}ger in Theorem \ref{Rogerlocality} applies. Because $\Gamma(t) \subset~\text{supp}\mu(t)$, we see that
$\theta(t): \Gamma(t)\rightarrow N_{0}$ is $\mathcal{H}^{N-1}$-measurable and $2\sigma\theta(t)\mathcal{H}^{ N-1}
\lfloor\Gamma(t)$ has weak mean curvature
\begin{equation}
 \overrightarrow{H_{\mu}}(t) = \frac{k(t)}{\theta(t)\sigma}\frac{\nabla u}{\abs{\nabla u}} \in L^{2}(2\sigma\theta(t)\mathcal{H}^{N-1}
\lfloor\Gamma(t)).
\label{weakmu}
\end{equation}
By Sch\"{a}tzle's Theorem, $
 \theta(t)(\cdot) $ is a constant  $\theta_{0}(t)$ on $\Gamma(t).$ Moreover, \cite{TonePhase} shows 
that $\theta_{0}(t)$ is an odd integer.\\
\h Now, the constancy of $\theta$ on $\Gamma(t)$ gives
\begin{multline}
 \liminf_{\e\rightarrow 0} E_{\e}(u^{\e}(t)) \geq 2\theta_{0}(t) \sigma\mathcal{H}^{N-1}(\Gamma(t)) + \frac{\lambda}{2}\int_{\Omega}\abs{\nabla v(t)}^2 dx
\\=
\theta_{0}(t)\sigma\int_{\Omega} \abs{\nabla u(t)} +\frac{\lambda}{2}\int_{\Omega}\abs{\nabla v(t)}^2 dx.
\label{ttime}
\end{multline}
Moreover, from the proof of Theorem \ref{Schatzle}, one has $\overrightarrow{H_{\mu}}(t) = \overrightarrow{H_{\Gamma(t)}}$
. Because $\Gamma(t)$ is $C^{2}$, by Corollary 4.3 in \cite{Schatzle1}, the weak mean curvature vector coincides with the classical 
mean curvature vector. Hence, (\ref{weakmu}) gives
\begin{equation}
\kappa(t) =  \frac{k(t)}{\theta_{0}(t)\sigma}.
 \label{local2}
\end{equation}
From (\ref{utime}) and (\ref{vtime}), we can estimate
\begin{multline*}
 2\sigma\int_{\Omega} \abs{\nabla u(t)} +\frac{\lambda}{2}\int_{\Omega}\abs{\nabla v(t)}^2 dx -
 \left(\sigma\int_{\Omega} \abs{\nabla u(s)} +\frac{\lambda}{2}\int_{\Omega}\abs{\nabla v(s)}^2 dx\right)\\ \geq -2C\sigma\abs{t-s}^{\alpha} 
-C\abs{t-s}^{1/8} + \sigma\int_{\Omega} \abs{\nabla u(s)}.
\end{multline*}
Thus, we can find $\delta = \delta (u^{0},s)>0$ depending only on the initial 
data and $s$ such that for all $t\in [s, s+\delta)$
\begin{equation}
 2\sigma\int_{\Omega} \abs{\nabla u(t)} +\frac{\lambda}{2}\int_{\Omega}\abs{\nabla v(t)}^2 dx >
 \sigma\int_{\Omega} \abs{\nabla u(s)} +\frac{\lambda}{2}\int_{\Omega}\abs{\nabla v(s)}^2 dx.
\label{highermult}
\end{equation}
Assuming we have the well-preparedness at time $s\geq 0$. Then
\begin{equation}
 \lim_{\e\rightarrow 0} E_{\e}(u^{\e}(s)) =\sigma\int_{\Omega} \abs{\nabla u(s)} +\frac{\lambda}{2}\int_{\Omega}\abs{\nabla v(s)}^2 dx.
\label{stime}
\end{equation}
Because the Ohta-Kawasaki functional is decreasing along the flow, one has for $t>s$
\begin{equation}
  \limsup_{\e\rightarrow 0} E_{\e}(u^{\e}(t))\leq  \lim_{\e\rightarrow 0} E_{\e}(u^{\e}(s)).
\label{sttime}
\end{equation}
Thus from (\ref{ttime}), (\ref{stime}) and (\ref{sttime}), one finds that, for $L^{1}$ a.e $t\in [s, T]$, 
\begin{equation}
 \theta_{0}(t)\sigma\int_{\Omega} \abs{\nabla u(t)} +\frac{\lambda}{2}\int_{\Omega}\abs{\nabla v(t)}^2 dx 
\leq \sigma\int_{\Omega} \abs{\nabla u(s)} +\frac{\lambda}{2}\int_{\Omega}\abs{\nabla v(s)}^2 dx.
\label{finaltime}
\end{equation}
Revoking (\ref{highermult}) and (\ref{finaltime}), we conclude that the interface $\Gamma(t)$ has 
single multiplicity $\theta_{0}(t) =1$ for $L^{1}$ a.e. $t\in [s, s+\delta)$, i.e., (\ref{milddens}) is satisfied.
Therefore, the proof of Proposition \ref{mild}
is complete by setting $s=0$.
\end{proof}
\begin{remark}
The inequality (\ref{ttime}) can only be strict in the presence of hidden boundary, i.e, the set $supp\mu(t)\backslash\Gamma(t)$ is not empty and 
has positive 
$(N-1)$-dimensional Hausdorff measure. The set  $supp\mu(t)\backslash\Gamma(t)$ is one where $\nu =0$ in (\ref{MC}).
\label{hidden}
\end{remark}
\begin{remark}
Our proof shows that well-preparedness of the data
at any time $s$ will ensure (\ref{milddens}) for all $t\in [s, s + \delta(s)]$  with single multiplicity for $\Gamma(t)$.  
\label{conti}
\end{remark}
\h Finally, we give the proof of Lemma \ref{mainlemma}.
\begin{proof}[Proof of Lemma \ref{mainlemma}]
Consider $t\in [0,\delta(0)]$ where $\delta(0)$ is defined in the proof of Proposition \ref{mild}. We can assume that 
$\liminf_{\e\rightarrow 0} \int_{\Omega}\abs{\nabla w^{\varepsilon}(t)}^2\leq C$, 
otherwise the inequality (\ref{dissipation}) is trivial. Let $k^{\e}(t) = w^{\e}(t) + \lambda v^{\e}(t)$. Recall from (\ref{Chenlem}) and (\ref{boundk}) that
\begin{equation}
 \norm{w^{\varepsilon}(t)}_{H^{1}(\Omega)} + \norm{k^{\varepsilon}(t)}_{H^{1}(\Omega)} \leq  
C(M + \norm{\nabla w^{\varepsilon}(t)}_{L^{2}(\Omega)})\leq C.
\label{wH1}
\end{equation}
Now, up to extraction, we have $w^{\varepsilon}(t)$ and $k^{\e}(t)$ weakly converge in $H^{1}(\Omega)$ to some 
$w(t)$ and $k(t)$, respectively. Inspecting the proof of 
Proposition \ref{mild}, one observes that well-preparedness of the initial data together with (\ref{wH1}) 
implies (\ref{milddens}) at the time slice $t$, that is, the interface $\Gamma(t)$ has constant multiplicity $\theta_{0}(t) =1$. Thus, from (\ref{local2}) with the 
constant $\theta \equiv 1$, one deduces $k(t) =\sigma\kappa(t)$ on $\Gamma(t)$. Letting $\e\rightarrow 0$ in $k^{\e}(t) = w^{\e}(t) + \lambda v^{\e}(t)$, 
one gets $k(t) = w(t) + \lambda v(t)$. Hence
$w(t) = \sigma\kappa(t)-\lambda v(t)$ on $\Gamma(t)$.\\ 
\h By lower semicontinuity, one has 
\begin{equation}\liminf_{\varepsilon\rightarrow 0}\int_{\Omega}\abs{\nabla w^{\varepsilon}(t)}^2 dx
\geq \int_{\Omega}\abs{\nabla w(t)}^{2} dx \geq \inf_{\omega\in H^{1}({\Omega}),~ \omega=\sigma\kappa-\lambda v~on~\Gamma(t)}\int_{\Omega} 
\abs{\nabla {\color{red}\omega}}^2 dx.\label{lsc}\end{equation} 
\h The latter minimization  problem has a unique solution 
$\omega=\widetilde{\sigma\kappa(t) -\lambda v(t)}$ as defined in Section 2. 
Therefore, from (\ref{lsc}) and (\ref{seminorm}), we obtain 
\begin{equation}\liminf_{\varepsilon\rightarrow 0}\int_{\Omega}\abs{\nabla w^{\varepsilon}(t)}^2 dx
\geq \norm{\sigma\kappa(t)-\lambda v(t)}_{H^{1/2}_{n}(\Gamma(t))}^2.\label{withm}\end{equation}
\end{proof}
\begin{remark}
 It is very important to obtain the single multiplicity of the interface $\Gamma(t)$ in the proof of Lemma \ref{mainlemma}. In general, if $\Gamma(t)$ has
constant multiplicity $m$ then $k(t) = m\sigma\kappa(t)$ on $\Gamma(t)$ and the best inequality one can get is the following
\begin{equation*}\liminf_{\varepsilon\rightarrow 0}\int_{\Omega}\abs{\nabla w^{\varepsilon}(t)}^2 dx
\geq \norm{m\sigma\kappa(t)-\lambda v(t)}_{H^{1/2}_{n}(\Gamma(t))}^2\end{equation*}
where the quantity on the right hand side can be much smaller than the expected quantity $\norm{\sigma\kappa(t)-\lambda v}_{H^{1/2}_{n}(\Gamma(t))}^2$. This is
in contrast to an $H^{1}$-version of De Giorgi's conjecture (see Theorem 1.2 in \cite{LeCH} and Theorem
\ref{conjCH} in this paper) where any constant multiplicity suffices the proof.
\label{DGrk}
\end{remark}
\section{Proof of Theorem \ref{conjCH}}
\label{CHpr}
In this section, we present the proof of Theorem \ref{conjCH}.
\begin{proof}[Proof of Theorem \ref{conjCH}]
Let $k^{\e} = \e\Delta u^{\e}-\e^{-1} f(u^{\e})$. We can assume that 
$\displaystyle \liminf_{\e\rightarrow 0}\int_{\Omega}\abs{\nabla k^{\varepsilon}}^2 dx\leq C$, otherwise the inequality (\ref{DGineq}) is trivial.
From the energy bound and the mass constraint (\ref{fixedmass}) and  in view of Lemma 3.4 in \cite{ChenCH}, we have for all 
$\varepsilon$ sufficiently small \begin{equation*}\norm{k^{\varepsilon}}_{H^{1}(\Omega)}\leq 
C \left(\int_{\Omega}\left(\frac{\varepsilon}{2}\abs{\nabla u^{\e}}^2 +\frac{1}{\varepsilon}W(u^{\e})\right) dx +
 \norm{\nabla k^{\varepsilon}}_{L^{2}(\Omega)}\right)\leq C<\infty.\label{Sobolev}\end{equation*} 
Now, up to extraction, we have that $k^{\varepsilon}$ weakly converges to some $k$ in $H^{1}(\Omega).$ As in the proof of 
Proposition \ref{mild}, especially following (\ref{boundk})-(\ref{local2}), we can find an odd integer $\theta_{0}$ such that
\begin{equation*} 
k= \theta_{0}\sigma\kappa ~\text{on}~ \Gamma ~\text{a.e}~ \mathcal{H}^{N-1}.
\end{equation*}
Now, by lower semicontinuity, one has 
\begin{equation}\liminf_{\varepsilon\rightarrow 0}\int_{\Omega}\abs{\nabla k^{\varepsilon}}^2 dx
\geq \int_{\Omega}\abs{\nabla k}^{2} dx\geq \inf_{w\in H^{1}({\Omega}),~ w=\theta_{0}\sigma\kappa~on~\Gamma}\int_{\Omega} \abs{\nabla w}^2 dx
=\theta_{0}^2\sigma^2\norm{\kappa}_{H^{1/2}_{n}(\Gamma)}^2.\label{lscnew}\end{equation} 
Because $\theta_{0}$ is an odd integer, $\abs{\theta_{0}}\geq 1$. This combined with (\ref{lscnew}) gives (\ref{DGineq}) as desired.
\end{proof}
\begin{remark}
 In view of a recent result by R\"{o}ger and Tonegawa \cite{RT}, we might expect $\theta_{0}$ to be exactly $1$.
\end{remark}

\section{Proof of Theorem \ref{dynamics}}
\label{pfsec}
\h In this section, we prove Theorem \ref{dynamics}, formally following \cite{SS} (see also \cite{LeCH} for related results for the Cahn-Hilliard 
equation). \\
\h First, we briefly discuss the selection result alluded to in Section \ref{mainlemsec}.\\
\h For the rest of the section, $(u^{\varepsilon}, v^{\varepsilon}, w^{\e})$ denotes 
the solution of (\ref{OKeq}) on $\Omega\times [0,\infty).$ Let $T>0$ be any finite number. We define the following norm on distributions $u$ on $\Omega$ 
\begin{equation}
 \norm{u}_{1} = \sup_{\varphi \in C_{0}^{\infty}(\Omega),~\abs{\nabla\varphi}\leq 1}\abs{\int_{\Omega} u\varphi},
\label{1norm}\end{equation}
i.e., the norm in the dual of Lipschitz functions. Then, we have the following
\begin{propo}
There exists $u^{0}\in L^{4}(\Omega\times [0, T])$ such that $u^{0}$ is $C^{0,1/2}$ in time for the $\norm{\cdot}_{1}$-norm, and that, after extraction, \begin{equation} u^{\varepsilon}\rightharpoonup u^{0}~~\text{in}~ L^{4}(\Omega\times [0, T])\label{spacet}.\end{equation} Moreover, for all $t\in [0, T]$, we have  $u^{0}(t)\in BV (\Omega,\{-1,1\})$ and
\begin{equation} u^{\varepsilon}(t)\rightharpoonup u^{0}(t)~~\text{in}~ L^{4}(\Omega),~ u^{\varepsilon}(t)\longrightarrow u^{0}(t)~~\text{in}~ L^{2}(\Omega) .
\label{spaceonly}\end{equation}
\label{selection}
\end{propo}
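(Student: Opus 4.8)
The plan is to combine three ingredients: a uniform $L^{4}$ bound coming from the energy, a uniform-in-$\e$ modulus of continuity in time measured in the weak norm $\norm{\cdot}_{1}$ of \eqref{1norm}, and the Modica--Mortola compactness applied slice by slice. First I would observe that the energy bound $E_{\e}(u^{\e}(t))\leq M$ forces $\int_{\Omega}\frac{1}{\e}W(u^{\e}(t))\,dx\leq M$; since $W(u)=\frac12(u^2-1)^2$ controls $u^{4}$, writing $\int_{\Omega}\abs{u^{\e}(t)}^{4}\,dx\leq C+C\big(\int_{\Omega}\abs{u^{\e}(t)}^{4}\,dx\big)^{1/2}$ after one H\"{o}lder step yields $\norm{u^{\e}(t)}_{L^{4}(\Omega)}\leq C$ uniformly in $\e$ and $t$. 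Integrating in time gives a uniform bound in $L^{4}(\Omega\times[0,T])$, and, this space being reflexive, I extract a subsequence with $u^{\e}\rightharpoonup u^{0}$, which is \eqref{spacet}.

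Next comes the crucial time-equicontinuity. For $\varphi\in C_{0}^{\infty}(\Omega)$ with $\abs{\nabla\varphi}\leq 1$ and $0\leq s<t\leq T$, using $\partial_{t}u^{\e}=-\Delta w^{\e}$ and integrating by parts (the boundary term vanishes as $\varphi$ is compactly supported), I get $\abs{\int_{\Omega}(u^{\e}(t)-u^{\e}(s))\varphi\,dx}=\abs{\int_{s}^{t}\!\int_{\Omega}\nabla w^{\e}\cdot\nabla\varphi\,dx\,d\tau}\leq \abs{\Omega}^{1/2}\int_{s}^{t}\norm{\nabla w^{\e}(\tau)}_{L^{2}(\Omega)}\,d\tau$. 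By Cauchy--Schwarz in time together with the dissipation identity $\int_{s}^{t}\norm{\nabla w^{\e}}_{L^{2}(\Omega)}^{2}\,d\tau=E_{\e}(u^{\e}(s))-E_{\e}(u^{\e}(t))\leq M$, the right-hand side is at most $C\abs{t-s}^{1/2}$, so $\norm{u^{\e}(t)-u^{\e}(s)}_{1}\leq C\abs{t-s}^{1/2}$ uniformly in $\e$.

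Then I would run a metric-space Arzel\`{a}--Ascoli argument in $(L^{2}(\Omega),\norm{\cdot}_{1})$. Pointwise in time the family $\{u^{\e}(t)\}_{\e}$ is $\norm{\cdot}_{1}$-precompact, because a bounded subset of $L^{2}(\Omega)$ is sequentially compact for $\norm{\cdot}_{1}$: the normalized unit ball of Lipschitz functions with $\abs{\nabla\varphi}\leq1$ is compact in $C(\overline{\Omega})$ by Arzel\`{a}--Ascoli, so weak $L^{2}$ convergence against this equicontinuous family is uniform and hence coincides with $\norm{\cdot}_{1}$ convergence. Feeding this pointwise precompactness and the uniform modulus of the previous paragraph into Arzel\`{a}--Ascoli (diagonalizing over a countable dense set of times and using equicontinuity to control the remaining times), I extract a further subsequence with $u^{\e}\to u^{0}$ in $C^{0}([0,T];\norm{\cdot}_{1})$; the limit inherits the $C^{0,1/2}$ regularity in time, and testing against products identifies this $u^{0}(t)$ with the space-time limit of \eqref{spacet}. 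In particular $u^{\e}(t)\to u^{0}(t)$ in $\norm{\cdot}_{1}$ for \emph{every} $t$.

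It remains to upgrade the convergence at each fixed $t$. Since $u^{\e}(t)$ is bounded in $L^{4}(\Omega)$ and converges in $\norm{\cdot}_{1}$, its weak $L^{4}$ limit must agree with $u^{0}(t)$, giving the first half of \eqref{spaceonly}. The uniform bound $\int_{\Omega}\big(\frac{\e}{2}\abs{\nabla u^{\e}(t)}^{2}+\frac{1}{\e}W(u^{\e}(t))\big)dx\leq M$ then lets me invoke Modica--Mortola compactness \cite{MM}: the control on $\int_{\Omega}\frac{1}{\e}W$ forces $W(u^{0}(t))=0$ a.e. so $u^{0}(t)\in\{-1,1\}$, and $\Phi(u^{\e}(t))$ with $\Phi(u)=\int_{0}^{u}\sqrt{2W(s)}\,ds$ is bounded in $BV$, yielding $u^{0}(t)\in BV(\Omega,\{-1,1\})$ and $u^{\e}(t)\to u^{0}(t)$ strongly in $L^{1}(\Omega)$. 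Interpolating against the uniform $L^{4}$ bound through $\norm{f}_{L^{2}}\leq\norm{f}_{L^{1}}^{1/3}\norm{f}_{L^{4}}^{2/3}$ upgrades this to strong $L^{2}(\Omega)$ convergence, completing \eqref{spaceonly}. I expect the genuine difficulty to be precisely the passage from almost-every $t$ to every $t$: a Fubini argument on \eqref{spacet} alone produces the limit only for a.e. time slice, and it is the uniform modulus of continuity of the second paragraph, channelled through the Arzel\`{a}--Ascoli step, that closes this gap; the Modica--Mortola and interpolation ingredients are otherwise standard.
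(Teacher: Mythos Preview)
Your argument is correct and follows essentially the route the paper has in mind: the paper omits the proof, referring to Proposition~4.1 of \cite{LeCH}, and the remark right after the statement singles out precisely your main point---that the uniform $C^{0,1/2}$ modulus in the $\norm{\cdot}_{1}$-norm is what allows one diagonal subsequence to work for \emph{every} time slice. Your execution via Arzel\`{a}--Ascoli in $(L^{2},\norm{\cdot}_{1})$ together with Modica--Mortola compactness and $L^{1}$--$L^{4}$ interpolation is exactly the intended mechanism; the only cosmetic point is that the Lipschitz test-function set is \emph{pre}compact rather than compact in $C(\overline{\Omega})$, which is all you need.
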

The proof of this Proposition is similar to that of Proposition 4.1 in \cite{LeCH} and is thus omitted.
\begin{remark}
 For each $t$, from the energy bound $E_{\e}(u^{\e}(t))\leq E_{\e}(u^{\e}(0))\leq M$ and the compactness of BV functions in $L^{1}(\Omega)$, we can obtain
(\ref{spaceonly}) for a subsequence of $\e$'s. In general, this subsequence depends on $t$. The main point of Proposition \ref{selection} is that
this subsequence can be chosen independent of $t$. This follows from the time-continuity of $u^{0}$ in the $\norm{\cdot}_{1}$-norm. See Proposition 4.1 in \cite{LeCH}
for more details.
\end{remark}
\h Now, we are in a position to present the proof of Theorem \ref{dynamics}.
\begin{proof}[Proof of Theorem \ref{dynamics}]
{\bf 1.} First,  we note that the nonlocal Mullins-Sekerka law (\ref{NO}) with smooth initial interface $\Gamma(0)$ has unique smooth solution \cite{EN}. 
Thus, if $T_{\ast}$ is the minimum of the collision time and of the exit time from $\Omega$ of the hypersurfaces under the motion law
(\ref{NO}), then $T_{\ast}>0$. By the selection 
result in Proposition \ref{selection}, after extraction, we have that for all $t\in [0, T_{\ast}]$, $u^{\varepsilon}(\cdot,t)$ converges 
strongly in $L^{2}(\Omega)$ to  $u^{0}(\cdot,t)\in$ BV $(\Omega,\{-1,1\})$ with 
interface $\Gamma(t) = \partial\{x\in\Omega: u^{0}(x,t)=1\}\cap\Omega.$ By our assumption (A2) on the regularity of the time-track interface 
$\displaystyle \cup_{0\leq t\leq T_{\ast}}(\Gamma(t)\times \{t\})$, Lemma \ref{mainlemma} can be applied. Choose $\delta(0)>0$ as in Lemma
\ref{mainlemma}. Without loss of generality, one can assume that 
$\delta(0)<T_{\ast}.$ We proceed as follows.
 First, we confirm the evolution law on $[0, \delta(0)]$. Then we can easily iterate to continue the dynamics up to time $T_{\ast}$.\\
\h Let us prove that the 
interfaces $\Gamma(t)$ $(t\in [0, \delta(0)])$ evolve by the nonlocal Mullins-Sekerka law (\ref{NO}). Indeed, we have $
 \partial_{t}u^{\e} = -\nabla_{H_{n}^{-1}(\Omega)}E_{\varepsilon}(u^{\varepsilon}) $
and, for all $t\in (0, \delta(0)]$, 
\begin{eqnarray*}
 E_{\varepsilon}(u^{\varepsilon}(0))-E_{\varepsilon}(u^{\varepsilon}(t))&=&
-\int_{0}^{t}<\nabla_{H_{n}^{-1}(\Omega)}E_{\varepsilon}(u^{\varepsilon}(s)),\partial_{t}u^{\varepsilon}(s)>_{H_{n}^{-1}(\Omega)}ds\\&=&
 \frac{1}{2}\int_{0}^{t} \left(\norm{\nabla_{H_{n}^{-1}(\Omega)}E_{\varepsilon}(u^{\varepsilon}(s))}_{H_{n}^{-1}(\Omega)}^{2} +
 \norm{\partial_{t}u^{\varepsilon}(s)}_{H_{n}^{-1}(\Omega)}^{2}\right)ds\\& =&
 \frac{1}{2}\int_{0}^{t} \left(\norm{\nabla w^{\varepsilon}(s)}^{2}_{L^{2}(\Omega)} + 
\norm{\partial_{t}u^{\varepsilon}(s)}_{H_{n}^{-1}(\Omega)}^{2}\right)ds.
 \end{eqnarray*}
For each $s\in (0,t)$, recall that $\kappa(s)$ is the mean curvature of $\Gamma(s)$. Let $w(\cdot,s)\in H^{1}(\Omega)$ 
be the function $\widetilde{\sigma\kappa(s)-\lambda v(s)}$, i.e., $w(\cdot,s)$ satisfies $\Delta w(\cdot,s)=0$ in 
$\Omega\setminus \Gamma(s),$ 
$w(\cdot,s)=\sigma\kappa(s) -\lambda v (s)$ on $\Gamma(s)$ and finally $\frac{\partial w}{\partial n}=0$ on $\partial\Omega.$ By Proposition 
\ref{selection}, all assumptions of Proposition \ref{velolemma} are satisfied for $u^{\varepsilon}$ and $u^{0}$. 
Thus, by Lemma \ref{mainlemma}, the lower bound on velocity (\ref{boundvelo}) and the Cauchy-Schwarz inequality, we obtain
 \begin{eqnarray}\displaystyle
    E_{\varepsilon}(u^{\varepsilon}(0))-E_{\varepsilon}(u^{\varepsilon}(t))&\geq& 
\frac{1}{2}\int_{0}^{t} \left(\norm{\sigma\kappa -\lambda v}_{H_{n}^{1/2}(\Gamma(s))}^{2} + 
4\norm{\delta_{\Gamma(s)}\partial_{t}\Gamma(s)}_{H_{n}^{-1}(\Omega)}^{2}\right)ds -o(1)\label{strong}
\\&=&\frac{1}{2}\int_{0}^{t}\int_{\Omega}\left(\abs{\nabla w(x,s)}^2 + 
4\abs{\nabla \Delta_{n}^{-1}(\delta_{\Gamma(s)}\partial_{t}\Gamma(x,s))}^2\right) dx ds-o(1)\nonumber\\&\geq& -2\int_{0}^{t}\int_{\Omega}
\nabla w(x,s)\cdot\nabla (\Delta^{-1}_{n}(\delta_{\Gamma(s)}\partial_{t}\Gamma(x,s))) dxds -o(1)\label{Cauchy}.\end{eqnarray}
In view of the definition of $\Delta_{n}^{-1}$ in (\ref{xlemma}), the right hand side of (\ref{Cauchy}) becomes
\begin{eqnarray}
  2\int_{0}^{t}<\partial_{t}\Gamma(s),w>_{L^{2}(\Gamma(s))}ds-o(1)&=& 
\int_{0}^{t}\int_{\Gamma(s)} 2(\sigma\kappa(s)-\lambda v)\partial_{t}\Gamma(s) d\mathcal{H}^{N-1} ds -o(1)\nonumber\\&=& 
-\int_{0}^{t}\frac{d}{ds} E(\Gamma(s))ds -o(1)= E(\Gamma(0))-E(\Gamma(t))-o(1)\label{smoothness}.\end{eqnarray}
Equality (\ref{smoothness}) follows from the smoothness assumption (A2). From (\ref{strong})-(\ref{smoothness}), one gets $$E_{\varepsilon}(u^{\varepsilon}(t))-E(\Gamma(t))\leq E_{\varepsilon}(u^{\varepsilon}(0))-E(\Gamma(0)) + o(1).$$
 By $(A1),$ we deduce that $\limsup_{\varepsilon\rightarrow 0}E_{\varepsilon}(u^{\varepsilon}(t))\leq E(\Gamma(t)) .$ However, since $E_{\varepsilon}$ $\Gamma-$ converges to $E$, we have $\liminf_{\varepsilon\rightarrow 0}E_{\varepsilon}(u^{\varepsilon}(t))\geq E(\Gamma(t)).$ 
 Therefore, we must have 
\begin{equation}\lim_{\varepsilon\rightarrow 0}E_{\varepsilon}(u^{\varepsilon}(t))= E(\Gamma(t)).\label{singlet}\end{equation}
  This means that well-prepared initial data remains ``well-prepared'' in time for all $t\in [0,\delta(0)]$ and there are no hidden
boundaries in the limit measure of $E_{\e}(u^{\e}(t))$ (see Remark \ref{hidden}).
Furthermore, this also shows that the inequality (\ref{Cauchy}) is actually an equality. This implies that for each $s\in (0, t)$ and 
for a.e $x\in \Omega$, we have $\nabla w(x,s) = - 2\nabla \Delta_{n}^{-1}(\delta_{\Gamma(s)}\partial_{t}\Gamma(x,s)).$ So $w(x,s) = 
- 2\Delta_{n}^{-1} (\delta_{\Gamma(s)}\partial_{t}\Gamma(x,s)) +c(s)$ for some function $c$ depending only on time. Thus, in the sense 
of distributions $\delta_{\Gamma(s)}\partial_{t}\Gamma(x,s)= - \frac{1}{2}\Delta w(x,s )$.  By (\ref{twoLap}) and the definition of the function $w$,
 this relation is exactly the limiting dynamical law we wish to establish. Our proof of this nonlocal Mullins-Sekerka law is valid as 
long as $\Gamma(t)\subset\Omega$ and hypersurfaces contained in $\Gamma(t)$ do not collide for all $t< T_{\ast}$.\\
\h Now, starting from the time $\delta(0)$ with well-preparedness, we can use Remark \ref{conti} and Lemma \ref{mainlemma} to 
confirm the evolution law on $[\delta(0),\delta(1)]$ where 
$\delta(1) =\delta(\delta(0))$ defined in the proof of Proposition \ref{mild}. Define $\delta(k) =\delta(\delta(k-1))$. Due to the strict
positivity of the area $\int_{\Omega}\abs{\nabla u(t)}$ for any $t$, and from the construction of $\delta (k)$, we can show that
\begin{equation*}
 \lim_{k\rightarrow \infty}\delta(k) = T_{\ast}
\end{equation*}
where $T_{\ast}$ can be chosen to be the minimum of the collision time and of the exit 
time from $\Omega$ of the hypersurfaces under the nonlocal Mullins-Sekerka law.\\            
 {\bf 2.} Second, we show that $w^{\varepsilon}$ converges weakly in $L^{2}((0, T_{\ast}), H^{1}(\Omega))$ to $w.$ Indeed, 
for all $t\in (0, T_{\ast})$ we have 
\begin{eqnarray}
 \int_{0}^{t} \norm{\nabla w^{\varepsilon}(s)}^{2}_{L^{2}(\Omega)} ds =
 E_{\varepsilon}(u^{\varepsilon}(0))-E_{\varepsilon}(u^{\varepsilon}(t))\leq M.  
\label{Hminus1}
 \end{eqnarray} 
Recall from (\ref{Chenlem}) that    
\begin{equation*}
 \norm{w^{\varepsilon}(s)}_{H^{1}(\Omega)} \leq 
C(M + \norm{\nabla w^{\varepsilon}}_{L^{2}(\Omega)} + 1)\leq C.
\end{equation*}
It follows that for $\varepsilon$ sufficiently small, we have 
$$ \int_{0}^{t} \norm{ w^{\varepsilon}(s)}^{2}_{H^{1}(\Omega)} ds 
\leq C(M^2 +\int_{0}^{t} \norm{\nabla w^{\varepsilon}(s)}^{2}_{L^{2}(\Omega)} ds + \norm{u^{\e}}^2_{L^{2}(\Omega\times [0, T))}) \leq C< \infty.$$
Therefore, up to a further extraction, we have that 
$w^{\varepsilon}$ weakly converges to some $z$ in $L^{2}((0, T_{\ast}), H^{1}(\Omega)).$ We are going 
to prove that for a.e. $t\in (0, T_{\ast})$, \begin{equation} z(x,t) = \sigma \kappa(x, t)-\lambda v(x,t) = w(x, t)~ \text{for}~ \mathcal{H}^{N-1}~\text{a.e.}~x\in \Gamma(t).\label{vkappa}\end{equation}
Indeed, from (\ref{singlet}) and 
$\lim_{\e\rightarrow 0} \norm{u^{\e}(t)-\overline{u^{\e}(t)_{\Omega}}}^2_{H^{-1}(\Omega)}=  
\norm{u(t)-\overline{u(t)_{\Omega}}}^2_{H^{-1}(\Omega)},$ one deduces the single-multiplicity
 property of the limiting interface $\Gamma(t)$ on each time 
slice $t$. That is, in the sense of Radon measures 
\begin{equation*}
\left(\frac{\varepsilon\abs{\nabla u^{\varepsilon}}^2}{2} +
\frac{W(u^{\varepsilon})}{\varepsilon}\right)dx \rightharpoonup 2\sigma d\mathcal{H}^{N-1}\lfloor\Gamma(t).
\end{equation*}
Moreover, we have  the uniform bound on the energy $E_{\varepsilon}(u^{\varepsilon}(t))\leq M$ for all $t\in [0, T_{\ast}]$ and 
all $\varepsilon>0$. Combining these facts with the dominated convergence theorem, we get\\
- The single-multiplicity in space-time, i.e, in the sense of Radon measures, 
\begin{equation*}\left(\frac{\varepsilon\abs{\nabla u^{\varepsilon}}^2}{2} +
\frac{W(u^{\varepsilon})}{\varepsilon}\right)dx dt \rightharpoonup 2\sigma d\mathcal{H}^{N-1}\lfloor\Gamma(t) dt. 
\label{constantm4} \end{equation*}
- The limiting equipartition of energy in space-time, i.e, in the sense of Radon measures
\begin{equation*}
 \abs{\frac{\varepsilon \abs{\nabla u^{\varepsilon}}^2}{2}-\frac{W(u^{\varepsilon})}{\varepsilon}} dxdt\rightharpoonup 0.
\label{vanishing4}\end{equation*}
Arguing as in the proof of Lemma 3.1 in \cite{LeCH}, we get (\ref{vkappa}). Now, we pass to the 
limit in the equation $\partial_{t} u^{\varepsilon} = -\Delta w^{\varepsilon}$.  Recalling 
that $w^{\varepsilon}$ weakly converges to $z$ in $L^{2}((0, T_{\ast}), H^{1}(\Omega))$ and that $w^{\varepsilon}$ satisfies 
the zero Neumann boundary condition, we find 
that $2\delta_{\Gamma(s)}\partial_{t}\Gamma(s) = - \Delta z(s)$ in $\Omega\times (0, T_{\ast})$ and 
$\frac{\partial z}{\partial n} =0$ on $\partial\Omega \times (0, T_{\ast})$ in the sense of 
distributions. To see this, fix $t\in (0, T).$ From the assumptions of our Theorem and the dominated convergence theorem, we 
find that $u^{\varepsilon}\rightarrow u$ in $L^{1}(\Omega\times [0, T]) $. It follows that 
$\partial_{t}u^{\varepsilon}(x,s)\rightarrow \partial_{t}u(x,s)$ in the sense of distributions. Denote 
by $\Omega^{+}(s)$ the set \{$x\in\Omega: u(x,s) = 1$\} and recall that 
$\Gamma(s)=\partial\{u(s)=1\}\cap\Omega$ is the interface separating the phases $-1$ and $+1.$
Then, $\partial_{t}u(s)=\partial_{t}(u(s)+1)=\partial_{t} (2\chi_{\Omega^{+}(s)})=2\delta_{\Gamma(s)}\partial_{t}\Gamma(s) = -\Delta z(s).$\\
\h Recall from {\bf 1.} that $2\delta_{\Gamma(s)}\partial_{t}\Gamma(s) = -\Delta w(s)$. Therefore, in the sense of 
distributions, $\Delta (z-w) =0$ in $\Omega\times (0, T_{\ast})$ and $\frac{\partial (z-w)}{\partial n} =0$ on 
$\partial\Omega\times (0, T_{\ast}).$ From (\ref{vkappa}), we conclude that $z = w$ a.e. in $\Omega\times (0, T_{\ast})$ and 
this shows that $w^{\varepsilon}$ converges weakly to $w$ in $L^{2}((0, T_{\ast}), H^{1}(\Omega))$. \\
{\bf 3.} Finally, we now complete the proof of the theorem by showing that $w^{\varepsilon}$ actually 
converges strongly in $L^{2}((0, T_{\ast}), H^{1}(\Omega))$ to $w.$
In fact, because of the equality (\ref{singlet}), the inequality (\ref{strong}) is actually an equality. Therefore 
\begin{equation}\lim_{\varepsilon\rightarrow 0}\int_{0}^{T_{\ast}} \norm{\nabla w^{\varepsilon}(s)}^{2}_{L^{2}(\Omega)} = 
\int_{0}^{T_{\ast}}\int_{\Omega}\abs{\nabla w(x,s)}^2 dxds.
\label{Yconv}
\end{equation}
 Since $\nabla w^{\varepsilon}$ converges weakly 
to $\nabla w$ in $L^{2}((0, T_{\ast}), L^{2}(\Omega))$, we conclude that $\nabla w^{\varepsilon}$ converges strongly 
to $\nabla w$ in $L^{2}((0, T_{\ast}), L^{2}(\Omega)).$ It follows that $w^{\varepsilon}$ converges strongly 
to $w$ in $L^{2}((0, T_{\ast}), H^{1}(\Omega))$ and this completes the proof of Theorem \ref{dynamics}.
 \end{proof}   
\section{Proof of Theorem \ref{sdynamics}}
\label{sthmsec}
\h In this section, we prove Theorem \ref{sdynamics}.
\begin{proof}[Proof of Theorem \ref{sdynamics}]
By (\ref{volholder}), $u^{\e}$ is H\"{o}lder continuous in time. From its radial symmetry and the fact that $\Gamma(t)$
consists of a finite number of spheres, we have the H\"{o}lder continuity 
in time for the limiting
interface $\Gamma(t)$. This together with $(BC)$ implies the existence of $T_{\ast}>0$ such that for all $t\in [0, T_{\ast})$, the spheres 
contained in $\Gamma(t)$ do not collide and 
\begin{myindentpar}{0.5cm}
(BC')The limit measure $\mu(t)$ of $\left(\frac{\varepsilon}{2}\abs{\nabla u^{\e}(t)}^2 +\frac{1}{\varepsilon}W(u^{\e}(t))\right)dx$
(in the sense of Radon measures) does not concentrate on the boundary $\partial\Omega$: $\mu(t)(\partial\Omega) =0$.
\end{myindentpar}
As in (\ref{boundk}), denoting $k^{\e}(t) = \e\Delta u^{\e}(t) - \e^{-1}f(u^{\e}(t))$, we have
\begin{equation*}
 \norm{k^{\varepsilon}(t)}_{H^{1}(\Omega)}\leq 
C(M + \norm{\nabla w^{\varepsilon}(t)}_{L^{2}(\Omega)} ).
\end{equation*}
Integrating from $0$ to $T^{\ast}$, and recalling (\ref{Hminus1}), we obtain
\begin{equation}
 \int_{0}^{T^{\ast}} \norm{k^{\e}(t)}^2_{H^{1}(\Omega)} dt\leq C.
\end{equation}
By Fatou's lemma, for $L^{1}$ a.e $t\in [0, T^{\ast})$, we have
\begin{equation}
\liminf_{\e\rightarrow 0} \norm{k^{\e}(t)}_{H^{1}(\Omega)} <\infty.
\label{Fatou}
\end{equation}
Let $t_{0}\geq 0$ be any sufficiently small number such that (\ref{Fatou}) is satisfied. It suffices to prove the following
\begin{propo}
The limit function $(u^{0},v,w)$ and the interfaces $\Gamma(t)$ satisfy (\ref{NO}) on $[t_{0}, T^{\ast})$. Furthermore, we have 
well-preparedness of the interface $\Gamma(t)$ for all time slice $t\geq t_{0}$, i.e.,
\begin{equation*}
 \lim_{\varepsilon\rightarrow 0}E_{\varepsilon}(u^{\varepsilon}(t))= E(\Gamma(t)).
\end{equation*}
\label{backward}
\end{propo}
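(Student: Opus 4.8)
The plan is to split the Proposition into two parts: (a) creating well-preparedness at the single time slice $t_0$, and (b) propagating it forward across $[t_0,T^{\ast})$ by the $\Gamma$-convergence of gradient flows scheme, exactly as in the proof of Theorem \ref{dynamics}. Granting (a), part (b) is a faithful repetition of that argument: well-preparedness at $t_0$ together with the H\"{o}lder-in-time continuity (\ref{utime}) of the radial interface lets me invoke Remark \ref{conti} and Lemma \ref{mainlemma} to secure the De Giorgi inequality (\ref{dissipation}) on a short interval $[t_0,t_0+\delta(t_0)]$; then the energy-dissipation identity, the lower bound on the velocity (Proposition \ref{velolemma}) and the Cauchy-Schwarz inequality combine as in (\ref{strong})--(\ref{singlet}), with equality forced by well-preparedness, to produce both the nonlocal Mullins-Sekerka law (\ref{NO}) and the persistence of well-preparedness $\lim_{\e\to0}E_\e(u^\e(t))=E(\Gamma(t))$ for every $t$ in the interval. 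Iterating $\delta(k)=\delta(\delta(k-1))$ and using the strict positivity of the areas $\int_\Omega|\nabla u(t)|$ pushes $\delta(k)\to T^{\ast}$, so (b) reaches all of $[t_0,T^{\ast})$.

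The whole difficulty is therefore part (a). At the chosen time $t_0$ the chemical potential $k^\e(t_0)=\e\Delta u^\e(t_0)-\e^{-1}f(u^\e(t_0))$ is bounded in $H^1(\Omega)$ by (\ref{Fatou}), and the energy does not concentrate on $\partial\Omega$ by the non-concentration property (BC$'$). I would first feed these bounds, together with the energy bound (\ref{energyttime}), into Tonegawa's Theorem \ref{Toneconvthm} to obtain the limit measure $\mu(t_0)$ and the integral $(N-1)$-varifold $(2\sigma)^{-1}\mu(t_0)$, whose density is odd on $\partial^{\ast}\{u(t_0)=1\}$; Sch\"{a}tzle's Theorem \ref{Schatzle} then forces this density to be a single constant odd integer $\theta_0$ on each connected (spherical) component of $\Gamma(t_0)$. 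It remains to exclude $\theta_0>1$, i.e. folding of $u^\e(t_0)$, and to exclude hidden boundary $\mathrm{supp}\,\mu(t_0)\setminus\Gamma(t_0)$, the boundary-layer part of which is already removed by (BC$'$).

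The exclusion of $\theta_0>1$ is where spherical symmetry is decisive and is, I expect, the main obstacle. Under radial symmetry $u^\e(\cdot,t_0)$ is a function of $|x|$ alone, so higher multiplicity or a hidden interior sphere forces the one-dimensional radial profile to make extra $\pm1$ transitions, i.e. extra sign changes of $u^\e\mp1$. The heart of the matter, which I would isolate as the instantaneous-regularization statement Theorem \ref{instancethm}, is that the dissipative, mass-preserving dynamics (\ref{OKeq}) together with the uniform, monotone-in-time energy bound $E_\e(u^\e(t))\le M$ cannot sustain such extra transitions at almost every time. This yields the strict upper bound $\limsup_{\e\to0}E_\e(u^\e(t_0))<2\sigma\int_\Omega|\nabla u(t_0)|+\tfrac{\lambda}{2}\int_\Omega|\nabla v(t_0)|^2\,dx$, which, confronted with the lower bound $\liminf_{\e\to0}E_\e(u^\e(t_0))\ge\theta_0\sigma\int_\Omega|\nabla u(t_0)|+\tfrac{\lambda}{2}\int_\Omega|\nabla v(t_0)|^2\,dx$ and the oddness of $\theta_0$, forces $\theta_0=1$ and $\mu(t_0)=2\sigma\mathcal{H}^{N-1}\lfloor\Gamma(t_0)$. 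This is the analogue of (\ref{highermult})--(\ref{finaltime}) in Proposition \ref{mild}, except that here the strict upper bound comes from the radial structure rather than from an assumed well-preparedness at an earlier time.

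With $\theta_0=1$ at $t_0$, the identity (\ref{local2}) read with $\theta_0=1$ gives $k(t_0)=\sigma\kappa(t_0)$, hence well-preparedness at $t_0$ and $w(t_0)=\sigma\kappa(t_0)-\lambda v(t_0)$ on $\Gamma(t_0)$; part (b) then runs verbatim. I expect the only genuinely new ingredient beyond Theorem \ref{dynamics} to be the radial instantaneous-regularization estimate behind Theorem \ref{instancethm}; everything else is bookkeeping transported from the proofs of Proposition \ref{mild}, Lemma \ref{mainlemma} and Theorem \ref{dynamics}.
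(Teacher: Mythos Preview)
Your two-part outline (instantaneous well-preparedness at $t_0$, then propagation) is the right architecture, and you correctly identify Theorem \ref{instancethm} as the new ingredient. But the content you assign to it, and the machinery you surround it with, are off in a way that costs you the full result.

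First, Theorem \ref{sdynamics} is stated for \emph{all} $N\geq 2$, yet your route to single multiplicity at $t_0$ goes through Tonegawa's Theorem \ref{Toneconvthm} and R\"{o}ger's Theorem \ref{Rogerlocality}, both of which require $N\leq 3$ (this is exactly the restriction in Theorem \ref{dynamics} that the radial case is supposed to bypass). Likewise your propagation step (b) invokes Lemma \ref{mainlemma} and Proposition \ref{mild}, which carry the same dimensional restriction. So as written, your argument only recovers the case $N\leq 3$.

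Second, you describe Theorem \ref{instancethm} as a \emph{dynamic} statement --- that the dissipative flow ``cannot sustain extra transitions'' --- and from this you try to extract a strict energy upper bound at $t_0$ in the style of (\ref{highermult})--(\ref{finaltime}). But that upper bound in Proposition \ref{mild} came from well-preparedness at an earlier time, which you do not have here; your replacement ``from the radial structure'' is not an argument. In the paper, Theorem \ref{instancethm} is purely \emph{static}: given a single radially symmetric function with bounded Allen--Cahn energy, $H^{1}$-bounded chemical potential, and no boundary concentration, one first proves equipartition of energy (adapting Chen \cite{ChenCH}, Theorem 5.1), so that the limit measure is supported on finitely many spheres with densities $m_j$; then one passes to the limit in the first-variation identity (\ref{varifold}) localized near each $\partial B_{r_j}$ to obtain $k=m_j\sigma(N-1)/r_j$ on $\partial B_{r_j}$, and compares with Chen's Lemma 5.4 (which gives $k=\pm\sigma(N-1)/r_j$) to force $m_j=1$. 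No dynamics, no Tonegawa, no Sch\"{a}tzle, no dimension restriction.

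Third, because Theorem \ref{instancethm} is static, the paper applies it not only at $t_0$ but at \emph{every} time slice $t\in[t_0,T^{\ast})$ where $\norm{\nabla w^{\e}(t)}_{L^{2}}$ is bounded. This yields the De Giorgi inequality (\ref{global}) directly for a.e.\ $t$, so the $\delta(k)$ iteration is unnecessary here; one runs the energy-dissipation argument of Theorem \ref{dynamics} once on all of $[t_0,T^{\ast})$.
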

Then $(u^{0},v,w)$ and $\Gamma(t)$ satisfy (\ref{NO}) on $[0, T^{\ast})$ with the initial data $\Gamma(0)$ understood as the initial trace: $\lim_{t\searrow 0} \Gamma(t) = \Gamma(0)$. Indeed, for radial solution with interface consisting of a finite number of spheres , the H\"{o}lder continuity in time of $u^{\e}$ in (\ref{volholder}) implies the H\"{o}lder continuity in time of $\Gamma(t)$. Thus the above limit of $\Gamma(t)$ as $t\rightarrow 0$ exists.\\
\h The proof of Proposition \ref{backward} relies on the following theorem, which could be of independent interest.
\begin{theorem}
Let $(u^{\e})$ be a sequence of smooth radially symmetric functions on $\Omega = B_{1}$ such that
\begin{myindentpar}{0.5cm}
 (1) $\frac{\partial u^{\e}}{\partial n} = 0$ on $\partial\Omega$,~~
(2) $\displaystyle\int_{\Omega}\left(\frac{\e}{2}\abs{\nabla u^{\e}}^2 +\frac{1}{\varepsilon}W(u^{\e})\right) dx\leq C $,\\
(3) $\liminf_{\e\rightarrow 0}\norm{\e\Delta u^{\e} - \e^{-1}f(u^{\e})}_{H^{1}(\Omega)} \leq C$.\\
(4) The limit measure $\mu$ of $\left(\frac{\varepsilon}{2}\abs{\nabla u^{\e}}^2 +\frac{1}{\varepsilon}W(u^{\e})\right)dx$
(in the sense of Radon measures) does not concentrate on the boundary $\partial\Omega$: $\mu(\partial\Omega) =0$.
\end{myindentpar}
Then, up to extracting a subsequence, $u^{\e}$ converges in $L^{2}(\Omega)$ to $u\in BV(\Omega, \{-1,1\})$ with interface $\Gamma$ separating
the phases. Then  
\begin{equation}
 \lim_{\e\rightarrow 0} E_{\e}(u^{\e}) = E(\Gamma).
\label{instance}
\end{equation}
\label{instancethm}
\end{theorem}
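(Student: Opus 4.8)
The plan is to exploit radial symmetry to reduce the problem to a one-dimensional one, in which the critical Sobolev exponent for the chemical potential drops to $1/2$, so that hypothesis (3) — a $W^{1,2}$ bound — becomes supercritical for \emph{every} space dimension $N$. Writing $u^\e(x)=U^\e(|x|)$ and $k^\e(x)=K^\e(|x|)$ with $k^\e=\e\Delta u^\e-\e^{-1}f(u^\e)$, I would first use assumption (4), together with the fact that the limiting interface $\Gamma$ is a finite union of spheres whose radii lie in a compact subset of $(0,1)$, to localize the analysis: all of the short-range energy concentrates on radial intervals $[a,b]\Subset(0,1)$, on which the weight $r^{N-1}$ is bounded above and below. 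Consequently the $N$-dimensional bounds (2)–(3) descend to genuine one-dimensional bounds $\int_a^b\bigl(\tfrac{\e}{2}(U^\e{}')^2+\tfrac1\e W(U^\e)\bigr)\,dr\le C$ and $\|K^\e\|_{H^1([a,b])}\le C$ (along the subsequence realizing the $\liminf$ in (3)). The origin carries no interface, since the innermost sphere has positive radius and $U^\e$ sits in a single well near $r=0$, while the boundary is controlled by (4), so no energy is lost in the localization.

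The decisive gain is that $H^1([a,b])\hookrightarrow C^{0,1/2}([a,b])$, so after extraction $K^\e\to k$ \emph{uniformly} on each such interval; this is precisely why, unlike in Theorem \ref{dynamics}, no restriction on $N$ is needed, and it replaces the use of Tonegawa's theorem. I would then analyze the profile of $U^\e$ across each limiting transition by the blow-up $V^\e(s)=U^\e(r_0+\e s)$: the radial equation $\e\bigl(U^\e{}''+\tfrac{N-1}{r}U^\e{}'\bigr)-\e^{-1}f(U^\e)=K^\e$ rescales to $V^\e{}''-f(V^\e)=\e K^\e+O(\e)\to 0$, so every limiting profile solves the standing-wave equation $V''=f(V)$ with first integral $\tfrac12(V')^2-W(V)=0$ forced by the finite total energy. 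Each monotone heteroclinic connecting $-1$ to $+1$ then carries energy exactly $\int_{-1}^{1}\sqrt{2W}\,ds=2\sigma$, i.e.\ multiplicity one. Inserting the uniform bound on $K^\e$ into the discrepancy identity $\frac{d}{dr}\bigl(\tfrac{\e}{2}(U^\e{}')^2-\tfrac1\e W(U^\e)\bigr)=K^\e U^\e{}'-\e\tfrac{N-1}{r}(U^\e{}')^2$ simultaneously yields the equipartition of energy and rules out turning points strictly between the wells, so that no extra folded layers can collapse onto a single sphere.

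Summing over the finitely many spheres gives $\lim_{\e}\int_\Omega\bigl(\tfrac{\e}{2}|\nabla u^\e|^2+\tfrac1\e W(u^\e)\bigr)\,dx=\sigma\int_\Omega|\nabla u|$ with density one on $\Gamma$. Since $v^\e\to v$ strongly in $H^1(\Omega)$, the long-range term $\tfrac{\lambda}{2}\|u^\e-\overline{u^\e}_\Omega\|_{H^{-1}(\Omega)}^2$ passes to the limit by continuity, and adding the two contributions yields $\lim_\e E_\e(u^\e)=E(\Gamma)$, which is (\ref{instance}). (The opposite inequality $\liminf_\e E_\e(u^\e)\ge E(\Gamma)$ is already free from the $\Gamma$-convergence liminf inequality, so the real content is the matching upper bound, equivalently multiplicity one.)

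The main obstacle is exactly the step ruling out higher multiplicity, namely showing that $U^\e$ cannot fold back to create several transition layers converging to the same limiting sphere. This is the one-dimensional incarnation of the multiplicity-one phenomenon that in the general non-radial setting of Theorem \ref{dynamics} required the combination of Tonegawa's theorem, R\"oger's locality theorem, and Sch\"atzle's constancy theorem; here it must instead be extracted directly from the uniform control on the chemical potential $K^\e$ afforded by the one-dimensional Sobolev embedding and from the first-integral structure of the limiting profile equation. Verifying that this uniform control genuinely precludes folds — rather than merely pinning down each individual blow-up limit as a single heteroclinic — is the delicate point on which the argument stands or falls.
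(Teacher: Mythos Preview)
Your overall strategy --- reduce to 1D, use $H^1([a,b])\hookrightarrow C^0$ to upgrade the chemical-potential bound to a uniform one, and then argue multiplicity one --- is sound and is genuinely different from what the paper does. The paper does \emph{not} use blow-up analysis. Instead it (i) obtains equipartition by citing Chen's discrepancy estimate, (ii) shows $\mu=\sum_j 2m_j\sigma\,\mathcal{H}^{N-1}\lfloor\partial B_{r_j}$, (iii) passes to the limit in the varifold first-variation identity $\int(\mathrm{div}\,\varphi-\partial_k\varphi^j n_j n_k)\,\e|\nabla u^\e|^2=\int(\xi^\e\mathrm{div}\,\varphi-u^\e\mathrm{div}(k^\e\varphi))$ to obtain $k=m_j\sigma(N-1)/r_j$ on $\partial B_{r_j}$, and (iv) compares this against Chen's Lemma~5.4, which independently gives $k=\pm\sigma(N-1)/r_j$, forcing $m_j=1$. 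So the paper's multiplicity-one argument is an algebraic comparison of two Gibbs--Thomson relations, one carrying the multiplicity and one not.

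Your proposal has a concrete gap exactly where you flag it, and the mechanism you offer does not close it. The discrepancy identity indeed forces any turning point of $U^\e$ (where $(U^\e)'=0$) to satisfy $W(U^\e)\approx 0$, i.e.\ to sit near a well. But a fold $-1\to+1\to-1$ has its turning point \emph{at} the well $+1$, not between the wells; so ``no interior turning points'' is perfectly compatible with arbitrarily many folded layers collapsing onto a single sphere, each connecting the two wells monotonically. Likewise, your blow-up argument only sees one layer at a time and cannot count layers separated by distances $\gg\e$. What actually rules out folds --- and this is essentially the content of Chen's Lemma~5.4 that the paper invokes --- is the following: multiply the radial ODE by $(U^\e)'$ and integrate across a \emph{single} monotone transition to get $k(r_j)\cdot[\pm 2]\approx\frac{N-1}{r_j}\cdot 2\sigma$, hence $k(r_j)=\pm\sigma(N-1)/r_j$ with sign determined by the direction of that transition. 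If $m\ge 2$ layers collapsed onto $r_j$ the signs would alternate, yet $K^\e\to k$ \emph{uniformly} (your $C^0$ embedding), so $k(r_j)$ would have to equal both $+\sigma(N-1)/r_j$ and $-\sigma(N-1)/r_j$, a contradiction since $r_j<\infty$. If you insert this sign-alternation argument in place of your turning-point claim, your ODE route goes through; as written, the step from ``no interior turning points'' to ``no folds'' is a non sequitur.
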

\begin{remark}
 Our theorem is an elliptic refinement of Chen's result \cite{ChenCH} (Theorem 5.3) for the time-dependent Cahn-Hilliard equation.
\end{remark}
\begin{proof}
 For simplicity, let us denote $k^{\e} = \e\Delta u^{\e} - \e^{-1}f(u^{\e})$ and the discrepancy measure by $\xi^{\e} =
\frac{\e}{2}\abs{\nabla u^{\e}}^2 -\frac{1}{\varepsilon}W(u^{\e} )$. By (1) and (2) and following the argument of the proof
of Theorem 5.1 in \cite{ChenCH}, one can bound the discrepancy measure in term of the Allen-Cahn energy as follows
\begin{equation*}
 \int_{\Omega}\abs{\xi^{\e}} dx \leq C_{1} \left(\delta + \eta + \e + C(\delta,\eta)\sqrt{\e}\right) \int_{\Omega}\left(\frac{\e}{2}\abs{\nabla u^{\e}}^2 +\frac{1}{\varepsilon}W(u^{\e})\right) 
dx,
\end{equation*}
where $\delta,\eta$ are arbitrary small numbers and $C_{1}$ is independent of $\e,\delta,\eta$. Sending first $\e$ to $0$ and then $\delta$ and $\eta$ to $0$, 
we obtain the limiting equipartition of energy
\begin{equation}
  \lim_{\e\rightarrow 0}\int_{\Omega}\abs{\xi^{\e}}  dx=0.
\label{zerodiscrep}
\end{equation}
It is easy to see from $(2)$ that, up to extracting a subsequence, $u^{\e}$ converges in $L^{p}(\Omega)$ ($1\leq p<4$) to $u\in BV(\Omega, \{-1,1\})$ 
with interface $\Gamma$ separating
the phases, see, e. g. \cite{Stern}. Moreover, $\Gamma$ consists of a finite number of spheres with radii $0<r_{1}<r_{2}<\cdots < r_{k}\leq1.$ In the sequel, we will take $p=10/3$. 
The limit measure $\mu$ of $e^{\e} = \left(\frac{\e}{2}\abs{\nabla u^{\e}}^2 +\frac{1}{\varepsilon}W(u^{\e})\right)dx$ contains $\Gamma =
\cup_{i=1}^{k}\partial B_{r_{i}}$ in its support.
Because there is no energy concentrating on the boundary $\partial\Omega$ due to 
$(4)$, we must have $r_{k}<1$. Now we prove that $\mu$ concentrates exactly on $\Gamma$. Indeed, writing
\begin{equation*}
 \frac{\e}{2}\abs{\nabla u^{\e}}^2 +\frac{1}{\varepsilon}W(u^{\e}) = \left(\sqrt{\frac{\e}{2}}\abs{\nabla u^{\e}}
 - \sqrt{\frac{W(u^{\e})}{\e}}\right)^{2} + \abs{\nabla u^{\e}} \sqrt{2 W(u^{\e})}
\end{equation*}
and keeping in mind that $W(u)=\frac{1}{2} (1-u^2)^2$, one has
\begin{equation*}
 \frac{\e}{2}\abs{\nabla u^{\e}}^2 +\frac{1}{\varepsilon}W(u^{\e}) = \left(\sqrt{\frac{\e}{2}}\abs{\nabla u^{\e}}
 - \sqrt{\frac{W(u^{\e})}{\e}}\right)^{2} + \abs{\nabla (u^{\e} - \frac{(u^{\e})^3}{3})}.
\end{equation*}
On the other hand, it is easy to see that
\begin{equation*}
 \left(\sqrt{\frac{\e}{2}}\abs{\nabla u^{\e}}
 - \sqrt{\frac{W(u^{\e})}{\e}}\right)^{2} \leq \abs{\sqrt{\frac{\e}{2}}\abs{\nabla u^{\e}}
 - \sqrt{\frac{W(u^{\e})}{\e}}}\left(\sqrt{\frac{\e}{2}}\abs{\nabla u^{\e}}
 + \sqrt{\frac{W(u^{\e})}{\e}}\right) =\abs{\xi^{\e}}.
\end{equation*}
Therefore, it follows from (\ref{zerodiscrep}) that the limit measure $\mu$ of $e^{\e}$ is that of $\abs{\nabla (u^{\e} - \frac{(u^{\e})^3}{3})} dx.$
Because $u^{\e}$ converges to $u$ in $L^{10/3}(\Omega)$, $u^{\e} - \frac{(u^{\e})^3}{3}$ converges in $L^{10/9}(\Omega)$ to $u - \frac{u^3}{3} = \frac{2}{3} u$ 
where $u\in 
BV(\Omega, \{-1,1\})$. This together with the fact that $u^{\e} - \frac{(u^{\e})^3}{3}$ is radial shows that the limit measure $\mu$ of $\abs{\nabla (u^{\e} - 
\frac{(u^{\e})^3}{3})} dx$ concentrates on the support of $\abs{\nabla u}$. Hence $\mu$ concentrates on $\Gamma = \cup_{i=1}^{k}\partial B_{r_{i}}$.
More precisely, there are numbers $m_{1},\cdots, m_{k}>0$ such that, in the sense of Radon measures
\begin{equation}
 e^{\e} \rightharpoonup \mu= \sum_{i=1}^{k} m_{i}2\sigma \mathcal{H}^{N-1}\lfloor \partial B_{r_{i}}.
\end{equation}
We claim that $m_{j} =1$ for all $j$. Note that the case $m_{j}>1$ for some $j$, if exists, corresponds to the piling up of the interface.\\
The key of the proof is the following identity for $\varphi = (\varphi^{1},\cdots,\varphi^{N})\in (C_{0}^{1}(\Omega))^{N}$
\begin{multline}
 \int_{\Omega}\left(\mathrm{div}\varphi - \sum_{j, k}\frac{\partial_{j}u^{\varepsilon}}
{\abs{\nabla u^{\varepsilon}}}\frac{\partial_{k}u^{\varepsilon}}{\abs{\nabla u^{\varepsilon}}}\partial_{k}\varphi^{j}\right)
\varepsilon\abs{\nabla u^{\varepsilon}}^2 dx = \int_{\Omega}\left(\xi^{\e}\mathrm{div}\varphi - u^{\varepsilon} \mathrm{div}(k^{\e}\varphi)\right) dx.\label{varifold}
\end{multline}
This identity can be obtained by multiplying both sides of the equation 
$k^{\varepsilon} = \varepsilon \Delta u^{\varepsilon}-\varepsilon^{-1}f(u^{\varepsilon})$ by $\nabla u^{\varepsilon}\cdot \varphi$ and 
then integrating by parts twice. \\
\h For any $j$, choose a thin annulus $A_{j}$ around $\partial B_{r_{j}}$ such that $
\left(\cup_{i\neq j}^{k}\partial B_{r_{i}}\right)\cap A_{j} =\emptyset$.\\
Now, fix $j$. Choose $\varphi \in C^{1}_{0}(A_{j})$ to localize (\ref{varifold}). Because the limit measure of $e^{\e}$ has constant
multiplicity $m_{j}$ in $A_{j}$ and by the limiting equipartition of energy (\ref{zerodiscrep}), we observe as in \cite{LeCH} that
\begin{equation*}
 \varepsilon \nabla u^{\varepsilon}\otimes\nabla u^{\varepsilon} dx\lfloor A_{j}\rightharpoonup 2m_{j}\sigma 
\stackrel{\rightarrow}{n}\otimes\stackrel{\rightarrow}{n} \mathcal{H}^{N-1}\lfloor\partial B_{r_{j}}.\label{res}
\end{equation*}
Consequently, letting $\e\rightarrow 0$ in (\ref{varifold}),
we obtain 
\begin{equation}
 2m_{j}\sigma\int_{\partial B_{j}} (\mathrm{div}\varphi - \partial_{k}\varphi^{j}\stackrel{\rightarrow}{n}_{j}\otimes\stackrel{\rightarrow}
{n}_{k})d\mathcal{H}^{N-1} = -\int_{A_{j}} u \mathrm{div}(k\varphi) dx,
\label{AFP}
\end{equation}
where $k$ is the weak limit in $H^{1}(\Omega)$ of $k^{\e}$ and 
$\stackrel{\rightarrow}{n} = (\stackrel{\rightarrow}{n}_{1},\cdots,\stackrel{\rightarrow}{n}_{N})$ is an outward unit 
normal to $\partial B_{r_{j}}$. Applying the divergence theorem to the left hand side of (\ref{AFP}), we get
\begin{equation}
 2m_{j}\sigma \int_{\partial B_{r_{j}}} \varphi \frac{N-1}{r_{j}} \stackrel{\rightarrow}
{n}d\mathcal{H}^{N-1} = -\int_{A_{j}} u \mathrm{div}(k\varphi) dx.
\label{AFP2}
\end{equation}
Now, we are ready to prove the Claim. Fix $j$ where $1\leq j\leq k$. Then $\partial B_{r_{j}} \subset \Gamma$ and $u=1$ on one side of $A_{j}$ and $u=-1$
on the other side of $A_{j}$ (with respect to $\partial B_{r_{j}}$). Using the divergence theorem for the right hand side of (\ref{AFP2}),
one finds that
\begin{equation}
 2m_{j}\sigma \int_{\partial B_{r_{j}}} \varphi \frac{N-1}{r_{j}} \stackrel{\rightarrow}
{n}d\mathcal{H}^{N-1} = 2\int_{\partial B_{r_{j}}} v\varphi \cdot  \stackrel{\rightarrow} {n} d\mathcal{H}^{N-1}.
\end{equation}
Hence $k = m_{j}\frac{\sigma(N-1)}{r_{j}}$ on $\partial B_{r_{j}}$. 
Combining this with Item 3. in Lemma 5.4 of \cite{ChenCH}, which says that on $\partial B_{r_{j}}, 
k = \pm \frac{\sigma(N-1)}{r_{j}}$, gives $m_{j} =1$ and thus completing the proof of the Claim. \\
\h It follows from the Claim that \begin{equation}
 \lim_{\e\rightarrow 0}\int_{\Omega}\left(\frac{\e}{2}\abs{\nabla u^{\e}}^2 +\frac{1}{\varepsilon}W(u^{\e})\right) dx = 2\sigma \mathcal{H}^{N-1}(\Gamma).
\label{AC}
\end{equation}
Furthermore, because $u^{\e}$ converges to $u$ in $L^{2}(\Omega)$, one has
 $\lim_{\e\rightarrow 0} \norm{u^{\e}-\overline{u^{\e}_{\Omega}}}^2_{H^{-1}(\Omega)}=  \norm{u-\overline{u_{\Omega}}}^2_{H^{-1}(\Omega)}.$
Combining this with (\ref{AC}), one obtains (\ref{instance}) as desired.
\end{proof}
Now, we give the proof of Proposition \ref{backward}. For ease of notation and by translating time, we can assume that $t_{0} =0.$ By 
(\ref{Fatou}), (BC') and Theorem \ref{instancethm}, the equation (\ref{OKeq}) has well-prepared initial data. We claim that, for all 
$t\in [0, T^{\ast})$, 
\begin{eqnarray}\liminf_{\varepsilon\rightarrow 0}\int_{\Omega}
\abs{\nabla w^{\varepsilon}(t)}^2 dx \geq  \norm{\sigma\kappa(t) -\lambda v}^{2}_{H_{n}^{1/2}(\Gamma(t))}.
\label{global}\end{eqnarray}
Indeed, we only need to prove inequality for the case the right hand side of (\ref{global}) is finite. Then, as in (\ref{Hminus1}) and 
(\ref{boundk}), we have
\begin{equation}
 \liminf_{\e\rightarrow 0}\left(\norm{w^{\e}(t)}_{H^{1}(\Omega)} + \norm{k^{\e}(t)}_{H^{1}(\Omega)}\right)\leq C.
\end{equation}
Thus, by Theorem \ref{instancethm}, we have
\begin{equation}
\lim_{\e\rightarrow 0}\int_{\Omega}\left(\frac{\e}{2}\abs{\nabla u^{\e}(t)}^2 +\frac{1}{\varepsilon}W(u^{\e}(t))\right) dx = 2\sigma \mathcal{H}^{N-1}(\Gamma(t)).
\label{elliptict}
\end{equation}
Recall that $w^{\e}(t) = k^{\e}(t) -\lambda v^{\e}(t)$. By extracting a subsequence, $w^{\e}(t)$ and $k^{\e}(t)$ converge weakly to $w(t)$ and
$k(t)$ respectively in $H^{1}(\Omega)$. It is well-known \cite{LM} that the single multiplicity of the 
interface $\Gamma(t)$ in (\ref{elliptict}) gives the Gibbs-Thompson relation $k(t) =\sigma \kappa(t)$ on $\Gamma(t)$. Thus 
$w(t) =\sigma \kappa(t) -\lambda v(t)$ on $\Gamma(t)$. Now (\ref{global}) follows as in the proof of the Lemma \ref{mainlemma}. We remark that well-preparedness
of initial data and (\ref{global}) are all we need to complete the proof of Proposition \ref{backward}, following the same lines of argument
as in the proof of Theorem \ref{dynamics}. Thus the proof of Theorem \ref{sdynamics} is also complete.
\end{proof}
\section{Proof of Theorem \ref{transport}}
\label{transsec}
\h In this section, we give the proof of Theorem \ref{transport}.
\begin{proof}[Proof of Theorem \ref{transport}] We recall the following notation for all $s\geq 0$
 \begin{equation*}
  \norm{\cdot}^2_{Y(s)} = 4\norm{\cdot}^2_{H^{-1/2}_{n}(\Gamma(s))}
 \end{equation*}
It follows from the proofs of Theorems \ref{dynamics} and \ref{sdynamics} that for all $t_{0}>0$, we have
\begin{myindentpar}{1cm}
1. Well-preparedness of the evolving interface, i.e, 
\begin{equation}
 \lim_{\e\rightarrow 0} E_{\e}(u^{\e}(t_{0})) = E(u(t_{0}))
\label{tone}
\end{equation}
2. The convergence of the velocity in its natural energy space (cf. (\ref{Yconv}))
 \begin{equation}\lim_{\varepsilon\rightarrow 0}\int_{t_{0}}^{T_{\ast}} \norm{\nabla w^{\varepsilon}(s)}^{2}_{L^{2}(\Omega)} ds = 
\int_{t_{0}}^{T_{\ast}}\int_{\Omega}\abs{\nabla w(x,s)}^2 dxds = \int_{t_{0}}^{T^{\ast}}\norm{\nabla_{Y(s)} E(\Gamma(s))}^2_{Y(s)} ds.
\label{ttwo}
\end{equation}
\end{myindentpar}
For the case of well-prepared initial data, as it can be seen from the proof
of Theorem \ref{dynamics} that (\ref{tone}) and (\ref{ttwo}) also hold for $t_{0}=0$.
The first equality, (\ref{tone}), allows us to construct a deformation presented in Proposition \ref{deform}. The second 
equality, (\ref{ttwo}), allows us to apply the deformation to prove the 
transport estimate stated in (\ref{veloconv}). The proof of
Theorem \ref{transport} will then follow from Lemma \ref{modifiedV} and the transport estimate in Section \ref{transportest}.
\subsection{Construction of the deformation} Our main result in this section is the construction of a deformation in the following
\begin{propo}
Let $(u^{\e})$ be a sequence of smooth functions on $\Omega$ satisfying $\frac{\partial u^{\e}}{\partial n} =0$ on $\partial\Omega$, 
$E_{\e}(u^{\e})\leq M$ and  $u^{\varepsilon} \rightarrow u\in BV(\Omega, \{-1,1\})$ in $L^{2}(\Omega)$ where $u$ has $\Gamma$
as its smooth interface separating the phases $1$ and $-1$. Furthermore, assume that $\Gamma$ consists of a finite 
number of closed, connected hypersurfaces inside $\Omega$ and that
\begin{equation}\lim_{\varepsilon\rightarrow 0}E_{\varepsilon}(u^{\varepsilon})= E(\Gamma).\label{singlet2}
\end{equation} 
Let $V$ be a smooth function defined on $\Gamma$ so that
$V\in H^{-1/2}_{n}(\Gamma).$ Let $w(t)$ be any smooth deformation of $\Gamma$ with normal velocity vector $\bf{V}$ at $t =0$, i.e., $w(t)$ consists of a finite 
number of closed, connected hypersurfaces inside $\Omega$ satisfying
\begin{equation}
 w(0) =\Gamma,~\partial_{t}w(0) = {\bf V}
\end{equation}
where ${\bf V} = V \stackrel{\rightarrow}{n}$. Then, 
we can find $w^{\e}(t)\in C^{1}(\Omega)$ such that $
 w^{\e}(0) = u^{\e}, $ and the following equalities hold
\begin{equation}
 \lim_{\e\rightarrow 0}\norm{\partial_{t} w^{\e}(0)}^2_{H^{-1}_{n}(\Omega)} = \norm{\partial_{t}w(0)}^2_{Y} = \norm{V}^2_{Y}
= 4 \norm{V}^{2}_{H^{-1/2}_{n}(\Gamma)},
\label{veloineq}
\end{equation}
\begin{equation}
 \lim_{\e\rightarrow 0}\left.\frac{d}{dt}\right\rvert_{t=0}E_{\varepsilon}(w^{\e}(t)) = \left.\frac{d}{dt}\right\rvert_{t=0} E(w(t)).
\label{slopeineq}
\end{equation}
\label{deform}
\end{propo}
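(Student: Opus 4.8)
The plan is to lift the deformation $w(t)$ of $\Gamma$ to a deformation $w^\e(t)$ of $u^\e$ by transporting $u^\e$ along the flow of a vector field extending the prescribed normal velocity, after a small divergence correction that fixes the mass. First I would choose a smooth compactly supported $\mathbf X$ on $\Omega$ with $\mathbf X = V\vec n$ on $\Gamma$ (possible since $\Gamma\subset\subset\Omega$ is smooth and $V$ smooth). Because the assumption $V\in H^{-1/2}_n(\Gamma)$ forces $\int_\Gamma V\,d\mathcal H^{N-1} = 0$, the deformation is volume preserving (cf. (\ref{volpreserv})), so the mass defect $\int_\Omega u^\e\,\mathrm{div}\,\mathbf X = -\int_\Omega\mathbf X\cdot\nabla u^\e\to -2\int_\Gamma V\,d\mathcal H^{N-1} = 0$. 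Hence I can correct $\mathbf X$ to $\mathbf X^\e = \mathbf X + \mathbf Y^\e$ with $\mathbf Y^\e\to 0$ in $C^1_0(\Omega)$ and $\int_\Omega u^\e\,\mathrm{div}\,\mathbf X^\e = 0$; this is exactly what makes $\mathbf X^\e\cdot\nabla u^\e$ mean zero, hence an element of $H^{-1}_n(\Omega)$, and it gives (\ref{smallde}). Writing $\Phi^\e_t$ for the flow of $\mathbf X^\e$, I set $w^\e(t) = u^\e\circ\Phi^\e_{-t}$, so $w^\e(0) = u^\e$, $w^\e(t)\in C^1(\Omega)$, and $\partial_t w^\e(0) = -\mathbf X^\e\cdot\nabla u^\e$; the orientation of the flow is fixed so that the limiting interface of $w^\e(t)$ is precisely the prescribed $w(t)$ and $\partial_t w^\e(0)\rightharpoonup \partial_t u(0)$.

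For the velocity identity (\ref{veloineq}), the decisive fact is the strong $L^2$ convergence $u^\e\to u$. Since $\norm{\partial_t w^\e(0)}_{H^{-1}_n} = \norm{f^\e}_{H^{-1}_n}$ for $f^\e := \mathbf X^\e\cdot\nabla u^\e$, I work with $f^\e$. Testing against $\varphi\in H^1(\Omega)$ and integrating by parts, $\int_\Omega f^\e\varphi = -\int_\Omega u^\e\,\mathrm{div}(\mathbf X^\e\varphi)\to -\int_\Omega u\,\mathrm{div}(\mathbf X\varphi) = 2\int_\Gamma V\varphi\,d\mathcal H^{N-1}$, so $f^\e\rightharpoonup 2V\delta_\Gamma$ and the potentials $g^\e := -\Delta_n^{-1}f^\e$ are bounded in $H^1(\Omega)$ and converge weakly to $g := -\Delta_n^{-1}(2V\delta_\Gamma)$. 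Then I would compute $\norm{f^\e}_{H^{-1}_n}^2 = \int_\Omega|\nabla g^\e|^2 = \int_\Omega f^\e g^\e = -\int_\Omega u^\e\,\mathrm{div}(g^\e\mathbf X^\e)$ and pass to the limit: $u^\e\to u$ strongly in $L^2$ while $g^\e\rightharpoonup g$ in $H^1$ (so $\nabla g^\e\rightharpoonup\nabla g$ in $L^2$ and $g^\e\to g$ in $L^2$), whence the right-hand side converges to $-\int_\Omega u\,\mathrm{div}(g\mathbf X) = \int_\Omega|\nabla g|^2 = \norm{2V\delta_\Gamma}_{H^{-1}_n}^2 = 4\norm{V}_{H^{-1/2}_n(\Gamma)}^2$, the last equality being the isometry $\norm{V\delta_\Gamma}_{H^{-1}_n(\Omega)} = \norm{V}_{H^{-1/2}_n(\Gamma)}$ that follows from (\ref{twoLap}) and Lemma \ref{downhalf}. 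It should be stressed that strong $L^2$ convergence of $u^\e$ is exactly what turns the lower-semicontinuity bound into the equality (\ref{veloineq}).

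For the slope identity (\ref{slopeineq}) I would differentiate $E_\e(w^\e(t))$ in $t$ exactly, for each fixed $\e$, via the change of variables $y = \Phi^\e_{-t}(x)$, so that no interchange of $\lim_\e$ and $\frac{d}{dt}$ is required. The short-range part produces the stress-tensor integral $\int_\Omega\big[(\tfrac{\e}{2}\abs{\nabla u^\e}^2 + \tfrac1\e W(u^\e))\,\mathrm{div}\,\mathbf X^\e - \e\sum_{j,k}\partial_k(\mathbf X^\e)^j\,\partial_j u^\e\,\partial_k u^\e\big]$, the same structure as the left side of (\ref{varifold}), while the long-range part is $\lambda\int_\Omega v^\e\,\partial_t w^\e(0)$. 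To pass to the limit I invoke well-preparedness (\ref{singlet2}): exactly as in Proposition \ref{mild} and Theorem \ref{instancethm}, (\ref{singlet2}) together with Tonegawa's Theorem \ref{Toneconvthm} and Sch\"{a}tzle's Theorem \ref{Schatzle} forces multiplicity one and equipartition, so that $(\tfrac{\e}{2}\abs{\nabla u^\e}^2 + \tfrac1\e W(u^\e))\,dx\rightharpoonup 2\sigma\,\mathcal H^{N-1}\lfloor\Gamma$ and $\e\,\nabla u^\e\otimes\nabla u^\e\,dx\rightharpoonup 2\sigma\,\vec n\otimes\vec n\,\mathcal H^{N-1}\lfloor\Gamma$. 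Hence the short-range part converges to $2\sigma\int_\Gamma(\mathrm{div}\,\mathbf X - \vec n\cdot D\mathbf X\,\vec n)\,d\mathcal H^{N-1} = 2\sigma\int_\Gamma\mathrm{div}_\Gamma\mathbf X\,d\mathcal H^{N-1}$, which is the short-range first variation $\frac{d}{dt}\big|_0\sigma\int_\Omega\abs{\nabla u(t)}$ of (\ref{firstterm}); and since $v^\e\to v$ strongly in $H^1$ and $\partial_t w^\e(0)\rightharpoonup\partial_t u(0)$, the long-range part converges to the long-range first variation $\frac{d}{dt}\big|_0\tfrac\lambda2\int_\Omega\abs{\nabla v(t)}^2$ of (\ref{secondterm}). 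Adding the two and invoking (\ref{curvature}) yields $\lim_\e\frac{d}{dt}\big|_0 E_\e(w^\e(t)) = \frac{d}{dt}\big|_0 E(w(t))$; only the normal component $V = \mathbf X\cdot\vec n$ survives in the limit because $\int_\Gamma\mathrm{div}_\Gamma\mathbf X$ depends only on it, and the signs are fixed once the flow orientation is chosen as in the first paragraph.

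The main obstacle is the passage to the limit in the slope: it rests entirely on single multiplicity and equipartition of the diffuse energy, which are not automatic and are delivered only by the well-preparedness hypothesis (\ref{singlet2}) through the Tonegawa--Sch\"{a}tzle argument (or, in the radial case, Theorem \ref{instancethm}); ruling out hidden boundaries and higher multiplicity is, as throughout the paper, the crux. By comparison the velocity identity is soft, requiring only strong $L^2$ convergence of $u^\e$ and the mean-zero correction. The one remaining routine point is that the correction $\mathbf Y^\e$ can indeed be taken arbitrarily small in $C^1_0(\Omega)$ while enforcing $\int_\Omega u^\e\,\mathrm{div}\,\mathbf X^\e = 0$, which is possible precisely because the defect is $o(1)$ by the volume preservation $\int_\Gamma V = 0$ built into $V\in H^{-1/2}_n(\Gamma)$.
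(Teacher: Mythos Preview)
Your approach is essentially the paper's: extend $V\vec n$ to a compactly supported field, make a small $C^1$ correction so that $\nabla u^\e\cdot\mathbf X^\e$ has mean zero (the paper does this in Lemma~\ref{modifiedV} via $\mathbf V^\e=\mathbf V+h(\e)\varphi$), transport $u^\e$ by the flow, and differentiate. For the velocity identity you use the potential $g^\e=-\Delta_n^{-1}f^\e$ and pass to the limit in $-\int_\Omega u^\e\,\mathrm{div}(g^\e\mathbf X^\e)$ via the strong/weak pairing; the paper instead writes everything through the Neumann Green function $G(x,y)$ and integrates by parts twice before letting $\e\to 0$. Both arguments are the same idea and both are correct; yours is a bit more streamlined.

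One point to correct in the slope argument: you invoke Tonegawa's Theorem~\ref{Toneconvthm} and Sch\"atzle's Theorem~\ref{Schatzle} to obtain multiplicity one and the varifold convergence $\e\nabla u^\e\otimes\nabla u^\e\,dx\rightharpoonup 2\sigma\,\vec n\otimes\vec n\,\mathcal H^{N-1}\lfloor\Gamma$. But Tonegawa's theorem requires a uniform $W^{1,p}$ bound on the chemical potential $\e\Delta u^\e-\e^{-1}f(u^\e)$, which is \emph{not} among the hypotheses of this proposition (and Theorem~\ref{instancethm} requires radial symmetry). The paper does not use Tonegawa here at all: once well-preparedness \eqref{singlet2} gives convergence of the Allen--Cahn energy to $2\sigma\mathcal H^{N-1}(\Gamma)$, the varifold convergence follows directly from Reshetnyak's theorem (see \cite{Res,LM}), with no chemical-potential bound needed. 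Your conclusion is right, but the justification should cite Reshetnyak/Luckhaus--Modica rather than Tonegawa--Sch\"atzle. Similarly, for the long-range term you should pass to the limit by the same integration-by-parts trick you used for the velocity (and which the paper uses), since $\partial_t w^\e(0)=-\nabla u^\e\cdot\mathbf X^\e$ is not bounded in $L^2$ and the pairing with $v^\e$ cannot be read off from ``$\partial_t w^\e(0)\rightharpoonup\partial_t u(0)$'' alone.
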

\begin{proof}
We observe that $V$ being smooth on $\Gamma$ and belonging to
$H^{-1/2}_{n}(\Gamma)$ imply, by Lemma \ref{downhalf}, $
 \int_{\Gamma} V d\mathcal{H}^{N-1}=0.$ In fact, in Lemma \ref{downhalf}, let $u=V$ and $v=1$. Then, by (\ref{seminorm}),
\begin{equation*}
 \int_{\Gamma} V d\mathcal{H}^{N-1}= \langle V,1\rangle_{H_{n}^{-1/2}(\Gamma)\times H_{n}^{1/2}(\Gamma)} = -<V^{\ast},1>_{H_{n}^{1/2}(\Gamma)} = 
- <\nabla \tilde{V^{\ast}}, \nabla \tilde{1}>_{L^{2}(\Omega)}.
\end{equation*}
It follows from (\ref{chartilde}) that $\tilde{1} =1.$ Thus
\begin{equation*}\int_{\Gamma} V d\mathcal{H}^{N-1} = - <\nabla \tilde{V^{\ast}}, \nabla 1>_{L^{2}(\Omega)} =0.
 \end{equation*}
Let us extend the vector field ${\bf V}$ outside $\Gamma$ in such a way that
${\bf V}\in (C^{1}_{c}(\Omega))^{N}$. Let $\Omega^{+} =\{x\in \Omega: u(x) =1\}.$ Then the divergence theorem gives
\begin{equation}
 \int_{\Omega} \mathrm{div} {\bf V} dx =0; ~\int_{\Omega}2\chi_{\Omega^{+}} \mathrm{div} {\bf V} dx =0.
\label{divfree}
\end{equation}
We need the following simple lemma, which also implies the existence of a small perturbation $\partial_{t}{\bf \Gamma}^{\e}$ of 
$\partial_{t}{\bf \Gamma}$ satisfying (\ref{smallde}).
\begin{lemma}
There exists a vector field ${\bf V}^{\e}\in (C^{1}_{0}(\Omega))^{N}$ satisfying the following conditions
(i) $
 \lim_{\e\rightarrow 0}\norm{{\bf V}^{\e}-{\bf V} }_{C^{1}_{0}(\Omega)} =0 $;~~
(ii) $\int_{\Omega} \nabla u^{\e}\cdot {\bf V}^{\e} dx =0$.
\label{modifiedV}
\end{lemma}
\begin{proof}
Let us consider a smooth vector field $\varphi \in (C^{1}_{0}(\Omega))^{N}$ satisfying $
 \int_{\Gamma}\varphi\cdot \stackrel{\rightarrow}{n} d\mathcal{H}^{N-1}\neq 0.$ Let 
$ {\bf V}^{\e} = {\bf V} + h(\e) \varphi $
where $h(\e) \rightarrow 0$ as $\e\rightarrow 0$ to be chosen later. Then ${\bf V}^{\e}\in (C^{1}_{0}(\Omega))^{N}$.
With this choice of ${\bf V}^{\e}$, $(i)$ is clearly satisfied.\\
Concerning $(ii)$, we have, by the divergence theorem and the fact that $\bf{V}^{\e}$ has compact support
\begin{equation*}
 -\int_{\Omega}\nabla u^{\e}\cdot {\bf V}^{\e} dx= -\int_{\Omega} \mathrm{div} (u^{\e} {\bf V}^{\e} ) dx + \int_{\Omega} u^{\e} \mathrm{div} {\bf V}^{\e} dx
= \int_{\Omega} u^{\e} \mathrm{div} {\bf V}^{\e} dx.
\end{equation*}
Because $
 \int_{\Omega}\mathrm{div} {\bf V}^{\e} dx =\int_{\partial\Omega}{\bf V}^{\e} \stackrel{\rightarrow}{n} d\mathcal{H}^{N-1} =0,$
we see that
\begin{equation}
 -\int_{\Omega}\nabla u^{\e}\cdot {\bf V}^{\e} dx = \int_{\Omega} (u^{\e} +1) \mathrm{div} {\bf V}^{\e} dx
= \int_{\Omega} (u^{\e} +1) \mathrm{div} {\bf V} dx + h(\e)\int_{\Omega} (u^{\e} +1) \mathrm{div} \varphi dx.
\label{zerocond}
\end{equation}
Therefore, $(ii)$ will be satisfied by choosing 
\begin{equation*}
 h(\e) = \frac{-\int_{\Omega} (u^{\e} +1) \mathrm{div} {\bf V}dx }{\int_{\Omega} (u^{\e} +1) \mathrm{div} \varphi dx}.
\end{equation*}
It remains to verify that $h(\e) \rightarrow 0$ as $\e\rightarrow 0$. Indeed, because $u^{\e} +1\rightarrow 2\chi_{\Omega^{+}}$ in 
$L^{1}(\Omega)$, the denominator of $h(\e)$, $\int_{\Omega} (u^{\e} +1) \mathrm{div} \varphi dx $ converges to 
$\int_{\Omega}2\chi_{\Omega^{+}} \mathrm{div} \varphi dx  = 2 \int_{\Gamma}\varphi\cdot \stackrel{\rightarrow}{n} d\mathcal{H}^{N-1} \neq 0$, as $\e\rightarrow 0.$ On the other hand,
using (\ref{divfree}), we see that the numerator of $h(\e)$, 
$-\int_{\Omega} (u^{\e} +1) \mathrm{div} {\bf V} dx= -\int_{\Omega} (u^{\e} +1 - 2\chi_{\Omega^{+}}) \mathrm{div} {\bf V} dx \rightarrow 0$
as $\e\rightarrow 0$. As a result, $h(\e) \rightarrow 0$ as $\e\rightarrow 0$.
\end{proof}
Consider $t$ sufficiently small such that the map $
 \chi_{\e, t}(x) = x + t {\bf V}^{\e}(x) $
is a diffeomorphism of $\Omega$ into itself. By the construction of ${\bf V}^{\e}$ in Lemma \ref{modifiedV}, the smallness of $t$
can be chosen independent of $\e$. We define $w^{\e}(x,t)$ as follows
\begin{equation}
 w^{\e}(x,t) = u^{\e}(\chi^{-1}_{\e,t}(x)).
\end{equation}
Let us check that $w^{\e}$ satisfies the desired properties. First, we confirm (\ref{veloineq}) by showing that
\begin{equation*}
 \lim_{\e\rightarrow 0}\norm{\partial_{t} w^{\e}(0)}^2_{H^{-1}_{n}(\Omega)} = \norm{\partial_{t}w(0)}^2_{Y} = \norm{V}^2_{Y}
= \int_{\Gamma}\int_{\Gamma}G(x,y)V(x)V(y) d\mathcal{H}^{N-1}(x)d\mathcal{H}^{N-1}(y)
\end{equation*}
where $G(x,y)$ is the Green's function for $-\Delta$ on $\Omega$
with Neumann boundary conditions. To do so, we start by evaluating
 $\norm{\partial_{t} w^{\e}(0)}^2_{H^{-1}_{n}(\Omega)}. $ 
Note that, for each $x$, we have \begin{equation*}
                                                    x = \chi_{\e, t} (\chi_{\e,t}^{-1}(x)) = \chi_{\e,t}^{-1}(x) + t {\bf V}^{\e} (\chi_{\e,t}^{-1}(x)).
\end{equation*}
Hence 
\begin{equation*}
 0 = \frac{d}{dt}(\chi_{\e, t}^{-1}(x)) + {\bf V}^{\e}(\chi_{\e,t}^{-1}(x)) + t \nabla {\bf V}^{\e}\cdot\frac{d}{dt}(\chi_{\e, t}^{-1}(x)).
\end{equation*}
Evaluating the above equation at $t=0$ and noting that $\chi_{\e,0}^{-1} (x) =x$, one obtains
\begin{equation*}
\left. \frac{d}{dt}\right\rvert_{t=0}(\chi_{\e, t}^{-1}(x)) = -{\bf V}^{\e}(x).
\end{equation*}
Thus 
\begin{equation}
 \partial_{t}w^{\e}(0) = \nabla u^{\e}\cdot \left.\frac{d}{dt}\right\rvert_{t=0}(\chi_{\e, t}^{-1}(x)) = -\nabla u^{\e} \cdot {\bf V}^{\e}.
\label{velodeform}
\end{equation}
By Lemma \ref{modifiedV} $(ii)$, there exists $g_{\e}\in H^{1}(\Omega)$ such that $-\Delta g_{\e} = \nabla u^{\e} \cdot {\bf V}^{\e} = \nabla u_{\ast}^{\e} \cdot {\bf V}^{\e}$ and 
$\frac{\partial g_{\e}}{\partial n} =0$ where we have denoted $u^{\e}_{\ast} = u^{\e} + 1$. 
Then, by the definition of the $H^{-1}_{n}(\Omega)$ norm in Section \ref{grOK} 
\begin{equation*}
 \norm{\partial_{t} w^{\e}(0)}^2_{H^{-1}_{n}(\Omega)} = \norm{\nabla u_{\ast}^{\e} \cdot {\bf V}^{\e}}^2_{H^{-1}_{n}(\Omega)}=
\int_{\Omega} \abs{\nabla g_{\e}}^2 dx. 
\end{equation*}
Now, let $G(x,y)$ be the Green's function for $-\Delta$ on $\Omega$
with Neumann boundary conditions. Then
\begin{equation}
 \int_{\Omega} \abs{\nabla g_{\e}}^2 dx
=\int_{\Omega}\int_{\Omega} G(x,y)\nabla u_{\ast}^{\e}(x) \cdot{\bf V}^{\e}(x) \nabla u_{\ast}^{\e}(y)\cdot {\bf V}^{\e}(y) dxdy.
\label{firstg}
\end{equation}
Using integration by parts
\begin{eqnarray*}
 \int_{\Omega}G(x,y)\nabla u_{\ast}^{\e}(x)\cdot {\bf V}^{\e}(x) dx & =& 
\int_{\Omega}G(x,y)[\mathrm{div}_{x} [u_{\ast}^{\e}(x) {\bf V}^{\e}(x)] - u_{\ast}^{\e}(x)\mathrm{div}_{x}{\bf V}^{\e}(x)] dx\\
&=& -\int_{\Omega} \left( \nabla_{x} G(x,y)\cdot {\bf V}^{\e}(x) u_{\ast}^{\e}(x) + u_{\ast}^{\e}(x)G(x,y)\mathrm{div}_{x}{\bf V}^{\e}(x)\right) dx\\ &\equiv& -H(y).
\end{eqnarray*}
Using integration by parts one more time
\begin{equation*}
 \int_{\Omega}H(y)\nabla u_{\ast}^{\e}(y)\cdot {\bf V}^{\e}(y)  dy = -\int_{\Omega} \left( \nabla_{y} H(y)\cdot {\bf V}^{\e}(y) u_{\ast}^{\e}(y) + 
u_{\ast}^{\e}(y)H(y)\mathrm{div}_{y}{\bf V}^{\e}(y)\right) dy.
\end{equation*}
Thus, (\ref{firstg}) gives
\begin{multline}
 \int_{\Omega} \abs{\nabla g_{\e}}^2 dx=  -\int_{\Omega}H(y)\nabla u_{\ast}^{\e}(y)\cdot {\bf V}^{\e}(y) dy\\ = 
\int_{\Omega} \left( \nabla_{y} H(y) \cdot {\bf V}^{\e}(y) u_{\ast}^{\e}(y)+ 
u_{\ast}^{\e}(y)H(y)\mathrm{div}_{y}{\bf V}^{\e}(y)\right) dy\\=
\int_{\Omega} u_{\ast}^{\e}(y){\bf V}^{\e}(y) dy\cdot \int_{\Omega} \left( \nabla_{y}\left[\nabla_{x} G(x,y)\cdot {\bf V}^{\e}(x)u_{\ast}^{\e}(x)\right] + 
u_{\ast}^{\e}(x)\nabla_{y}G(x,y)\mathrm{div}_{x}{\bf V}^{\e}(x)\right) dx \\
+ 
\int_{\Omega}u_{\ast}^{\e}(y)\mathrm{div}_{y}{\bf V}^{\e}(y) dy\int_{\Omega} \left( \nabla_{x} G(x,y) {\bf V}^{\e}(x)u_{\ast}^{\e}(x) + 
u_{\ast}^{\e}(x)G(x,y)\mathrm{div}_{x}{\bf V}^{\e}(x)\right) dx.
\label{beforelim}
\end{multline}
Letting $\e\rightarrow 0$ in (\ref{beforelim}), taking into account Lemma \ref{modifiedV} (i) and the fact that  $
 u^{\e}_{\ast} = u^{\e} +1 \rightarrow 2\chi_{\Omega^{+}}$ in $L^{1}(\Omega)$ as $\e\rightarrow 0$,  we find that
\begin{multline*}
 \lim_{\e\rightarrow 0}\norm{\partial_{t} w^{\e}(0)}^2_{H^{-1}_{n}(\Omega)} \\=4
\int_{\Omega} \chi_{\Omega^{+}}(y){\bf V}(y) dy \cdot\int_{\Omega} \left( \nabla_{y}\left[\nabla_{x} G(x,y) 
\chi_{\Omega^{+}}(x){\bf V}(x)\right] + 
\chi_{\Omega^{+}}(x)\nabla_{y}G(x,y)\mathrm{div}_{x}{\bf V}(x)\right) dx \\
+ 4\int_{\Omega}\chi_{\Omega^{+}}(y)\mathrm{div}_{y}{\bf V}(y) dy\int_{\Omega} \left( \nabla_{x} G(x,y)\cdot {\bf V}(x)\chi_{\Omega^{+}}(x) + 
\chi_{\Omega^{+}}(x) G(x,y)\mathrm{div}_{x}{\bf V}(x)\right) dx\\
= 4\int_{\Omega^{+}}\left ({\bf V}(y) \cdot \nabla_{y} M(y) + \mathrm{div}_{y} {\bf V}(y) M(y) \right)dy,
\end{multline*}
where
\begin{eqnarray*}
 M (y)&=& \int_{\Omega^{+}}\left( \nabla_{x}G(x,y)\cdot {\bf V}(x) + \mathrm{div}_{x}{\bf V}(x) G(x,y)\right) dx\\ &=& \int_{\Omega^{+}} \mathrm{div}_{x}
\left (G(x,y){\bf V}(x)\right)
 dx 
= \int_{\Gamma}G(x,y){\bf V}(x)\cdot \stackrel{\rightarrow}{n} d\mathcal{H}^{N-1}(x)\\
&=& \int_{\Gamma}G(x,y)V(x) d\mathcal{H}^{N-1}(x).
\end{eqnarray*}
It follows that
\begin{eqnarray*}
 \int_{\Omega^{+}}\left({\bf V}(y) \cdot\nabla_{y} M(y) + \mathrm{div}_{y} 
{\bf V}(y) M(y)\right) dy  &=& \int_{\Omega^{+}} \mathrm{div}_{y} ({\bf V}(y)M(y)) dy\\&=& 
\int_{\Gamma}M(y){\bf V}(y) \cdot \stackrel{\rightarrow}{n}d\mathcal{H}^{N-1}(y) \\&=& \int_{\Gamma}V(y)M(y)d\mathcal{H}^{N-1}(y)\\
& =& \int_{\Gamma}\int_{\Gamma}G(x,y)V(x)V(y) d\mathcal{H}^{N-1}(x) d\mathcal{H}^{N-1}(y).
\end{eqnarray*}
Hence
\begin{equation}
 \lim_{\e\rightarrow 0}\norm{\partial_{t} w^{\e}(0)}^2_{H^{-1}_{n}(\Omega)} = 4\int_{\Gamma}\int_{\Gamma}G(x,y)V(x)V(y) d\mathcal{H}^{N-1}(x)d\mathcal{H}^{N-1}(y).
\label{veloev}
\end{equation}
Now, we will express $\norm{V}^{2}_{H^{-1/2}_{n}(\Gamma)}$ in terms of
the Green function $G(x,y)$ and $V$. To do this, let us denote $V^{\ast} = \Delta^{-1}_{\Gamma}V$ as in Lemma \ref{downhalf}. Then
$\Delta_{\Gamma}V^{\ast} = V$ and 
\begin{equation}
 \norm{V}^{2}_{H^{-1/2}_{n}(\Gamma)} = \norm{V^{\ast}}^{2}_{H^{1/2}_{n}(\Gamma)}.
\label{velolim1}
\end{equation}
Recall from (\ref{seminorm}) that
\begin{equation}
 \norm{V^{\ast}}^{2}_{H^{1/2}_{n}(\Gamma)} = \norm{\nabla \tilde{V^{\ast}}}^2_{L^{2}(\Omega)}
\label{velolim2}
\end{equation}
where $\tilde{V^{\ast}}\in H^{1}(\Omega)$ satisfying $\frac{\partial \tilde{V^{\ast}}}{\partial n} =0$ on $\partial\Omega$
and by (\ref{twoLap}), $\Delta \tilde{V^{\ast}} =
\Delta_{\Gamma}(V^{\ast})\delta_{\Gamma} = V\delta_{\Gamma}$. 
Thus, there is a constant $C$ such that 
\begin{equation*}
 \tilde{V^{\ast}} (x) = -\int_{\Omega} G(x,y)V(y)\delta_{\Gamma}(y) dy + C
\end{equation*}
and therefore,
\begin{eqnarray*}
 \norm{\nabla \tilde{V^{\ast}}}^2_{L^{2}(\Omega)}& =& 
\int_{\Omega}\int_{\Omega} G(x,y)V(x)\delta_{\Gamma}(x)V(y)\delta_{\Gamma}(y) dxdy \\
&=& \int_{\Gamma}\int_{\Gamma}G(x,y)V(x)V(y) d\mathcal{H}^{N-1}(x)d\mathcal{H}^{N-1}(y).
\end{eqnarray*}
Combining the above equality with (\ref{velolim1}) and (\ref{velolim2}), we get
\begin{equation}
 \norm{V}^{2}_{H^{-1/2}_{n}(\Gamma)} = \int_{\Gamma}\int_{\Gamma}G(x,y)V(x)V(y) d\mathcal{H}^{N-1}(x)d\mathcal{H}^{N-1}(y).
\label{velolim4}
\end{equation}
From (\ref{veloev}) and (\ref{velolim4}), we obtain (\ref{veloineq}). \\
\h Next, we prove (\ref{slopeineq}) by establishing 
\begin{equation}
\lim_{\e\rightarrow 0}\left.\frac{d}{dt}\right\rvert_{t=0}\int_{\Omega}\left(\frac{\varepsilon}{2}\abs{\nabla w^{\e}(t) }^2 +
\frac{1}{2\varepsilon}(1-\abs{w^{\e}(t)}^2)^2\right)dx = \left.\frac{d}{dt}\right\rvert_{t=0} \sigma\int_{\Omega}\abs{\nabla w(t)}
\label{localterm}
\end{equation}
and 
\begin{equation}
\lim_{\e\rightarrow 0}\left.\frac{d}{dt}\right\rvert_{t=0}\frac{1}{2}
 \norm{w^{\e}(t)-\overline{w(t)^{\e}_{\Omega}}}^2_{H^{-1}(\Omega)}
 = \left.\frac{d}{dt}\right\rvert_{t=0} \frac{1}{2} \norm{w(t)-\overline{w(t)_{\Omega}}}^2_{H^{-1}(\Omega)}.
\label{nonlocals}
\end{equation}
We first prove (\ref{localterm}). Let us denote
\begin{equation*}
E^{loc}_{\e} (w) =\int_{\Omega}\left(\frac{\varepsilon}{2}\abs{\nabla w}^{2} +
\frac{1}{2\varepsilon}(1-\abs{w}^2)^2\right) dx.
\end{equation*}
We start by evaluating $
 \left.\frac{d}{dt}\right\rvert_{t=0}E^{loc}_{\varepsilon}(w^{\e}(t)). $ In view of the definition of 
$w^{\e}(x,t) = u^{\e}(\chi^{-1}_{\e,t}(x))$, 
with the change of variables $y =\chi_{t}(x)$, we have 
\begin{equation*}
 E_{\varepsilon}(w^{\e}(t))=\int_{\Omega}\left(\frac{\varepsilon}{2}\abs{\nabla u^{\e}\cdot \nabla \chi_{\e,t}^{-1}(\chi_{\e,t}(x))}^2 +
\frac{1}{2\varepsilon}(1-\abs{u^{\e}}^2)^2\right)\abs{\mathrm{det} \nabla \chi_{\e,t}(x)} dx.
\end{equation*}
Since
\begin{equation*}
\nabla \chi_{\e,t}^{-1}(\chi_{\e,t}(x)) = [I + t\nabla {\bf V}^{\e}(x)]^{-1} = I -t\nabla {\bf V}^{\e}(x) + o(t), 
\end{equation*}
\begin{equation*}
 \mathrm{det} \nabla \chi_{\e,t}(x) = \mathrm{det} (I + t\nabla {\bf V}^{\e}(x)) = 1 + t\mathrm{div} {\bf V}^{\e} + o(t)
\end{equation*}
we obtain after a simple calculation
\begin{multline*}
 E^{loc}_{\varepsilon}(w^{\e}(t)) 
= \int_{\Omega}\left(\frac{\varepsilon}{2}\abs{\nabla u^{\e} }^2 +
\frac{1}{2\varepsilon}(1-\abs{u^{\e}}^2)^2\right) (1 + t\mathrm{div}{ \bf V}^{\e}) dx \\-
\int_{\Omega} \e t <\nabla u^{\e}, \nabla u^{\e}\cdot \nabla {\bf V}^{\e}(x)> dx
 + o(t).
\end{multline*}
Therefore
\begin{equation*}
 \left.\frac{d}{dt}\right\rvert_{t=0}E^{loc}_{\varepsilon}(w^{\e}(t)) = \int_{\Omega}\left(\frac{\varepsilon}{2}\abs{\nabla u^{\e} }^2 +
\frac{1}{2\varepsilon}(1-\abs{u^{\e}}^2)^2\right) \mathrm{div}{ \bf V}^{\e} dx -
\int_{\Omega} \e  <\nabla u^{\e}, \nabla u^{\e}\cdot \nabla {\bf V}^{\e}(x)> dx.
\end{equation*}
\h We note that the convergence (\ref{singlet2}) corresponds to the case of single multiplicity of the limiting interface $\Gamma$. Now, the work of
Reshetnyak \cite{Res} (see also \cite{LM}) tells us that 
\begin{equation*}
\varepsilon \nabla u^{\varepsilon}\otimes\nabla u^{\varepsilon} dx\rightharpoonup 2\sigma 
\stackrel{\rightarrow}{n}\otimes\stackrel{\rightarrow}{n} \mathcal{H}^{N-1}\lfloor\Gamma. 
\end{equation*}
Thus, denoting ${\bf H}$ = $\kappa \stackrel{\rightarrow}{n}$ the mean curvature vector of $\Gamma$, we can now calculate, using
Lemma \ref{modifiedV} $(i)$, that
\begin{equation}
 \lim_{\e\rightarrow 0}  \left.\frac{d}{dt}\right\rvert_{t=0}E^{loc}_{\varepsilon}(w^{\e}(t)) =
\int_{\Gamma} 2\sigma \left( \mathrm{div} {\bf V}
-<\stackrel{\rightarrow}{n}, \stackrel{\rightarrow}{n}\cdot \nabla {\bf V}>\right) d\mathcal{H}^{N-1} = 
-2\sigma<{\bf H},{\bf V}>_{L^{2}(\Gamma)}.
\label{lim1}
\end{equation}
On the other hand, we have
\begin{equation}
\left.\frac{d}{dt}\right\rvert_{t=0}\sigma\int_{\Omega}\abs{\nabla w(t)} = -2\sigma<{\bf H},{\bf V}>_{L^{2}(\Gamma)}.
\label{deformcurv}\end{equation}
\h Therefore, (\ref{localterm}) follows from (\ref{lim1}) and (\ref{deformcurv}).\\
Thus, to obtain (\ref{slopeineq}), it remains to establish (\ref{nonlocals}). Let $v(t) =\Delta^{-1}(w(t)-\overline{w(t)_{\Omega}})$. Then, we
recall from (\ref{secondterm}) that
\begin{equation}
\left.\frac{d}{dt}\right\rvert_{t=0} \frac{1}{2}
\norm{w(t)-\overline{w(t)_{\Omega}}}^2_{H^{-1}(\Omega)} = \left.\frac{d}{dt}\right\rvert_{t=0} \frac{1}{2} 
\norm{\nabla v(t)}^{2}_{L^{2}(\Omega)}= 2<v, V>_{L^{2}(\Gamma)}
\label{wterm}
\end{equation}
where $v = \Delta^{-1}(u-\overline{u_{\Omega}})$. As in the proof of (\ref{secondterm}), we see that
\begin{multline*}
\norm{w^{\e}(t)-\overline{w^{\e}(t)_{\Omega}}}^2_{H^{-1}(\Omega)}  =
\int_{\Omega}\int_{\Omega} G(x,y)(w^{\e}(x,t)-\overline{w^{\e}_{\Omega}(t)})(w^{\e}(y,t)-\overline{w^{\e}_{\Omega}(t)})dxdy.
\label{gradweq}
\end{multline*}
Differentiating, we get
\begin{multline}
\left.\frac{d}{dt}\right\rvert_{t=0}\frac{1}{2}\norm{w^{\e}(t)-\overline{w^{\e}(t)_{\Omega}}}^2_{H^{-1}(\Omega)} \\= \int_{\Omega}\int_{\Omega} G(x,y)(w^{\e}(x,0)-\overline{w^{\e}_{\Omega}(0)})\left.\frac{d}{dt}\right\rvert_{t=0}\left(w^{\e}(y,t)-\overline{w^{\e}_{\Omega}(t)})\right)dxdy
\end{multline}
By (\ref{velodeform}) and Lemma \ref{modifiedV}$(ii)$,
\begin{equation}
\left.\frac{d}{dt}\right\rvert_{t=0} \overline{w^{\e}(t)_{\Omega}} = -\overline{\left(\nabla u^{\e}\cdot {\bf V}^{\e}\right)_{\Omega}} =0.
\end{equation}
Let us denote $v^{\e} = \Delta^{-1}(u^{\e}-\overline{u^{\e}_{\Omega}})$. Because $w^{\e}(x,0) = u^{\e}(x)$, there is some constant $c^{\e}$ such that
\begin{equation}
\int_{\Omega} G(x,y)(w^{\e}(x,0)-\overline{w^{\e}_{\Omega}(0)})dy = v^{\e}(x) + c^{\e}.
\end{equation}
Now, one has, using Lemma \ref{modifiedV} (ii) again,
\begin{multline*}
\left.\frac{d}{dt}\right\rvert_{t=0}\frac{1}{2}\norm{w^{\e}(t)-\overline{w^{\e}(t)_{\Omega}}}^2_{H^{-1}(\Omega)}  = \int_{\Omega}(v^{\e} + c^{\e}) (-\nabla u^{\e}\cdot {\bf V}^{\e}) dx = 
 \int_{\Omega}v^{\e}(-\nabla u^{\e}\cdot {\bf V}^{\e}) dx.
\end{multline*}
Integrating by parts gives
\begin{equation}
 \int_{\Omega}v^{\e}(-\nabla u^{\e}\cdot {\bf V}^{\e}) dx =  \int_{\Omega}v^{\e}(-\nabla (u^{\e} +1)\cdot {\bf V}^{\e}) dx = 
\int_{\Omega} (u^{\e} +1) \mathrm{div} (v^{\e}{\bf V}^{\e}) dx.
\end{equation}
Letting $\e\rightarrow 0$, one obtains
\begin{multline}
\lim_{\e\rightarrow 0}\left.\frac{d}{dt}\right\rvert_{t=0}\frac{1}{2}\norm{w^{\e}(t)-\overline{w(t)^{\e}_{\Omega}}}^2_{H^{-1}(\Omega)}\\ = 
\lim_{\e\rightarrow 0} \int_{\Omega} (u^{\e} +1) \mathrm{div} (v^{\e}{\bf V}^{\e}) dx= \int_{\Omega}2\chi_{\Omega^{+}} \mathrm{div} (v{\bf V}) dx
= 2\int_{\Gamma} v{\bf V}\cdot \stackrel{\rightarrow}{n} d\mathcal{H}^{N-1}= 2<v, V>_{L^{2}(\Gamma)}
\end{multline}
and (\ref{nonlocals}) follows.
\end{proof}
\subsection{Transport estimate}
\label{transportest} In this section, we prove the existence of a small perturbation $\partial_{t}{\bf \Gamma}^{\e}$ of 
$\partial_{t}{\bf \Gamma}$ satisfying (\ref{smallde})- (\ref{veloconv}) and thus completing the proof of 
Theorem \ref{transport}. Fix $t_{1}>0$. Let $t_{0}\in [t_{1}, T^{\ast}).$ Then, the smoothness of $\Gamma(t_{0})$ implies that of 
\begin{equation*}
 \nabla_{Y(t_{0})} E(\Gamma(t_{0})) = \frac{1}{2}\Delta_{\Gamma(t_{0})} (\sigma \kappa(t_{0})- \lambda v(t_{0}))\stackrel{\rightarrow}{n}
\end{equation*}
where $\stackrel{\rightarrow}{n}$ is the unit outernormal vector to $\Gamma(t_{0})$. 
By (\ref{tone}) and Proposition \ref{deform}, for any $z$ defined in a neighborhood of $t_{0}$ satisfying
$
 z(t_0) =\Gamma(t_{0}), \partial_{t} z(t_{0}) =-\nabla_{Y(t_{0})} E(\Gamma(t_{0})),
$
there exists $z^{\e}(t) = z^{\e}_{t_{0}}(t)$ such that $z^{\e}(t_{0}) = u^{\e}(t_{0})$,
\begin{equation}
 \limsup_{\e\rightarrow 0}\norm{\partial_{t} z^{\e}(t_{0})}^2_{X_{\e}} = \norm{\partial_{t}z(0)}^2_{Y(t_{0})} = 
\norm{\nabla_{Y(t_{0})} E(\Gamma(t_{0}))}^2_{Y(t_{0})}
\label{lim2}
\end{equation}
and
\begin{equation}
 \lim_{\e\rightarrow 0}\left.\frac{d}{dt}\right\rvert_{t=t_{0}}E_{\varepsilon}(z^{\e}(t)) = \left.\frac{d}{dt}\right\rvert_{t=t_{0}} E(z(t)).
\label{lim3}
\end{equation}
Here we recall that $X_{\e} = H^{-1}_{n}(\Omega).$\\
\h In the following, we will use the notation $\partial_{t}z^{\e}(t_{0}) \equiv \partial_{t}z^{\e}_{t_{0}}(t_{0})$. Note
 that (\ref{lim3}) implies
\begin{eqnarray}
 \lim_{\e\rightarrow 0}<\nabla_{X_{\e}} E_{\e}(u^{\e}(t_{0})), \partial_{t}z^{\e}(t_{0})>_{X_{\e}} &=& 
<\nabla_{Y(t_{0})}E(z(t_{0})), \partial_{t}z(t_{0})>_{Y(t_{0})} \nonumber \\&=& - \norm{\nabla_{Y(t_{0})}E(\Gamma(t_{0}))}^2_{Y(t_{0})}.
\label{lim4}
\end{eqnarray}
Now, upon expanding
\begin{multline*}
 \int_{t_{1}}^{T^{\ast}} \norm{\nabla_{X_{\e}}E_{\e}(u^{\e}) + \partial_{t}z^{\e}(t)}^2_{X_{\e}} dt = 
\int_{t_{1}}^{T^{\ast}} \norm{\nabla_{X_{\e}}E_{\e}(u^{\e}) }^2_{X_{\e}} dt + 
\int_{t}^{T^{\ast}} \norm{ \partial_{t}z^{\e}(t)}^2_{X_{\e}} dt\\+
\int_{t_{1}}^{T^{\ast}} 2<\nabla_{X_{\e}}E_{\e}(u^{\e}) , \partial_{t}z^{\e}(t)>_{X_{\e}} dt,
\end{multline*}
and letting $\e\rightarrow 0$, and using (\ref{ttwo}), (\ref{lim2}) and (\ref{lim4}), we find that
\begin{multline*}
 \lim_{\e\rightarrow 0} \int_{t_{1}}^{T^{\ast}} \norm{\nabla_{X_{\e}}E_{\e}(u^{\e}) + \partial_{t}z^{\e}(t)}^2_{X_{\e}} dt
\\= \int_{t_{1}}^{T^{\ast}} \left(\norm{\nabla_{Y(t)} E(\Gamma(t))}^2_{Y(t)}  + \norm{\nabla_{Y(t)} E(\Gamma(t))}^2_{Y(t)} -
 2\norm{\nabla_{Y(t)} E(\Gamma(t))}^2_{Y(t)}\right) dt =0.
\end{multline*}
This combined with the equation $\partial_{t}u^{\e} = - \nabla_{X_{\e}}E_{\e}(u^{\e}) $ shows that
\begin{equation}
 \lim_{\e\rightarrow 0} \int_{t_{1}}^{T^{\ast}} \norm{\partial_{t}u^{\e}-\partial_{t}z^{\e}(t)}^2_{X_{\e}} dt =0.
\label{concludeest}
\end{equation}
Recall from the construction of $z^{\e}(x,t)$, as in Proposition \ref{deform}, that $
\partial_{t}z^{\e}(x,t) =-\nabla u^{\e}\cdot {\bf V}^{\e} $ (see (\ref{velodeform})). Here ${\bf V}^{\e}$ is a small perturbation of the vector field ${\bf V}$ 
satisfying
${\bf V} =\partial_{t} w(t) = -\nabla_{Y}E(\Gamma(t))= 
(\partial_{t}\Gamma)\stackrel{\rightarrow}{n}$ on $\Gamma(t)$ in the sense that $
 \lim_{\e\rightarrow 0} \norm{{\bf V}^{\e}-{\bf V}}_{C^{1}_{0}(\Omega)} = 0.$
Thus, in terms of the notations of Theorem \ref{transport}, 
$\partial_{t}z^{\e}(x,t) = -\nabla u^{\e}\cdot \partial_{t}{\bf \Gamma}^{\e}$ and (\ref{smallde}) is satisfied. Consequently, we get from 
the estimate (\ref{concludeest}) that 
\begin{equation*}
 \lim_{\e\rightarrow 0} \int_{t_{1}}^{T^{\ast}} \norm{\partial_{t}u^{\e} + \nabla u^{\e}\cdot\partial_{t}{\bf \Gamma}^{\e}}^2_{X_{\e}} dt =0.
\end{equation*}
Therefore, we have proved (\ref{veloconv}) and Theorem \ref{transport}.

\end{proof}

{} 

\end{document}